\newcommand\del[1]{}
\newcommand\dela[1]{}
\numberwithin{equation}{section}
\title{Singular SPDE{\scriptsize s} with the Cauchy-Riemann operator on a torus}
\begin{document}

\author{Zdzis\l aw Brze\'zniak}

\address{Department of Mathematics, University of York, York, YO10 5DD, UK}
\email{zdzislaw.brzezniak@york.ac.uk}

\author{Mikhail Neklyudov}

\address{Departamento de Matem\'atica, Universidade Federal do Amazonas, Manaus, Brazil}
 \email{misha.neklyudov@gmail.com}

\author{Evelina Shamarova}

\address{Departamento de Matem\'atica, Universidade Federal da Para\'iba, Jo\~ao Pessoa, Brazil}
\email{evelina.shamarova@academico.ufpb.br}

%\subjclass[2020]{60H15, 37K99}

%\keywords{Cauchy-Riemann equations}

\begin{abstract}
We prove the existence of solution to the
following $\mathbb{C}^3$-valued singular SPDE on 
the 2D torus $\mathbb{T}^2$:
\begin{align}
\label{CR}
\partial_{\bar z} r = r \times \overline{r} + i \, \gamma \, {\mathscr W},
\tag{CR}
\end{align}
where $\partial_{\bar z}: = \frac12(\partial_x + i \partial_y)$ is the Cauchy-Riemann operator
on $\mathbb{T}^2$,
 ${\mathscr W} = ({\scriptstyle {\mathscr W}_1}, {\scriptstyle {\mathscr W}_2}, {\scriptstyle {\mathscr W}_3})$ is a real 3D white noise on $\mathbb{T}^2$ whose
component ${\scriptstyle {\mathscr W}_3}$ has zero mean over $\mathbb{T}^2$, $\gamma: = (\gamma_1,\gamma_2,\gamma_3)$
is an $\mathbb{R}^3$-vector and $\gamma \, {\mathscr W}: = (\gamma_1 {\scriptstyle {\mathscr W}_1}, \gamma_2 {\scriptstyle {\mathscr W}_2}, \gamma_3 {\scriptstyle {\mathscr W}_3})$.
\end{abstract}

\maketitle

\vspace{5mm}

{\scriptsize
{\noindent \bf Keywords:} 
Singular SPDE; Cauchy-Riemann operator; White noise; 2D torus; H\"older-Zygmund space; Cross-product nonlinearity;  Young product

\vspace{1mm}

{\noindent\bf 2020 MSC:} 
60H15, 35J46  
}

\section{Introduction}
\subsection{Motivation}
We start by noticing that
the Cauchy-Riemann operator $\pl_{\bar z}: = \frac12(\pl_x + i \pl_y)$ and its complex conjugate
$\pl_z: = \frac12(\pl_x - i \pl_y)$ are building blocks of  the 2D free Dirac operator 
\aaa{
\lb{dirac}
\mc D: = -2i \begin{pmatrix}
0 & \pl_z\\
\pl_{\bar z} & 0
\end{pmatrix} = -i\sg_1 \pl_x - i\sg_2 \pl_y,
}
where $\sg_1: =\begin{pmatrix}
0 & 1\\
1 & 0
\end{pmatrix}$ and $\sg_2: =\begin{pmatrix}
0 & -i\\
i & 0
\end{pmatrix}$ 
are the Pauli matrices, see, e.g. \cite{Behrndt2023}. Since equation \eqref{CR} can be rewritten as  $\pl_z \ovl r =  - r \x \ovl r - i\gm \fv$,   by   using the free Dirac operator \rf{dirac}, we see that equation \rf{CR} can be written in the following  equivalent form 
\aaa{
\lb{dirac-1111}
\mc D \vec{\ovl r}{r} =  \vec{2i}{-2i} r\x\ovl r + \vec{2}{-2}(\gm\fv).
}
Equation \eqref{dirac-1111} can be regarded as a white-noise perturbation of a stationary Dirac equation. Dirac operators have wide-ranging applications in quantum mechanics, relativistic field theory, differential geometry, 
and Clifford algebras \cite{GilbertMurray1991, Thaller1992}. By rewriting equation  \eqref{CR} in terms of  Dirac operators, we establish a connection to these areas, suggesting potential applications of our equation to these fields. We also remark that  stationary deterministic Dirac equations have been studied in the literature \cite{Chuprikov-2015, esteban-sere-1995}.

The quadratic term $r\x \ovl r$ in equation \eqref{dirac-1111} represents a nonlinear self-interaction, where the solution $r$
couples with its own complex conjugate,  introducing an interesting mathematical structure.
 One possible interpretation of this cross-product term is that it describes a nonlinear coupling between spinor-like components in a quantum mechanical system.
 Nonlinearities of quadratic type appear, for example, in Dirac-Klein-Gordon systems containing an interaction term that couples a Dirac field with a scalar field, as well as an interaction term representing a self-interaction of the Dirac field \cite{Selberg}. Quadratic nonlinearities  appear also in quadratic Dirac equations, which describe self-interacting fermions \cite{Selberg}.
From a mathematical perspective, the cross-product term plays a crucial role in balancing singularities introduced by the white noise $\fv$.
Specifically, this nonlinearity contains a self-correcting 
mechanism that cancels the most singular terms, 
ensuring the existence of solutions as 
limits in a H\"older space of negative order.
This interplay between the structure of our nonlinearity 
and the solvability of the equation highlights an interesting and novel aspect 
of this work.

Another motivation for studying equation \eqref{CR} comes from the  
following stochastic equation  
\aaa{
  \lb{B-L}
 \lap B=2 \pl_x B\times \pl_y B +  4\gm \fv,
 %\tag{LLG}
 }
 where $B:\Tor \to \Rnu^3$. The homogeneous version of 
 equation \rf{B-L} can be derived
 from the homogeneous Landau-Lifshitz-Gilbert (LLG) equation 
 \[ u\x \lap u = 0 \mbox{ with } |u|=1
 \]
 via  the  B\"acklund transformation 
$\nab B: = \big(u\x \pl_y u, -u \x \pl_x u\big)$,
see e.g. \cite[p. 50-52]{Helein2002}.
 This transformation
 provides a mathematical connection between the LLG equation and 
 equation \eqref{B-L}, suggesting that \eqref{B-L} can be viewed as a stochastic generalization of the LLG equation.

It is straightforward to prove, on a heuristic level,  that $r= i\pl_z B$ 
is a solution  to \rf{CR}  if and only if $B$ is a solution to \rf{B-L}.
This provides an additional motivation for studying equation \eqref{CR}, 
as it establishes a link between our stochastic equation 
and a well-known physical model.
 In Section \ref{c-mean-1111}, we deduce the existence of 
 a solution $B$ to \rf{B-L} from the existence of a  solution $r$ to \rf{CR} 
 under an additional assumption on the white noise $\fv$.

Thus, in this work we are motivated to study equation \rf{CR}. 
To the best of our knowledge, the Cauchy-Riemann operators
in the context of singular SPDEs have not been studied before. 
Moreover, the periodic setting introduces
additional challenges, as the solvability of \rf{CR} depends 
on the mean-zero condition of the right-hand side. 
More specifically, as we demonstrate in Lemma \ref{Lav1111},
the equation $\pl_{\bar z} r = f$ is solvable 
if and only if $\int_\Tor f = 0$.

We remark that, unlike most singular SPDEs considered in the literature, 
see e.g. \cite{bcz2023-2, bfm2016, DaPrato+Debussche_2002, FH20, hairer}, 
ours is of the first-order, 
where the kernel of the operator $\pl_{\bar z}$
increases the regularity by  $1$. In this sense,
and also since $\pl_{\bar z} \pl_z = \frac12 \lap$, 
our work is 
related to recent studies of singular SPDEs involving the fractional 
Laplacian $(-\lap)^\frac12$ 
\cite{Berglund2018, ChiariniLandim2022, Duch2025, DuchGubinelliRinaldi2023},
which can also be regarded as a first-order differential operator 
due to its kernel's regularity-increasing properties.  However, our focus on the Cauchy-Riemann operator introduces different challenges and requires new techniques, distinct from those used for fractional Laplacians. By developing these techniques, 
 we aim to extend the existing framework to a broader class of first-order singular SPDEs.

\subsection{The method}
{In this paper, we consider the  mild formulation of equation  \rf{CR}, see e.g. \eqref{sys_r1111},  
with the complex-valued Green function $G=2\pl_z K$ 
of  the operator $\pl_{\bar z}$ on
$\Tor$, where $K$ is the Green function for the Laplacian with zero mean on the torus $\Tor$. 
From this ``mild" equation,   we deduce  that the expected regularity of a solution to equation  \rf{CR} is $0^-$. This implies that the quadratic term $r\x \ovl r$ is ill-defined.
To overcome  this difficulty,
we apply the method used by  Da Prato--Debussche in \cite{DaPrato+Debussche_2002} which involves
decomposing the solution $r$ into a singular part and a smoother remainder. 
More specifically,
we rewrite the mild formulation of  equation \rf{CR} with respect to
a new  variable $R = r+ \gm \xi$, where $\xi: = 2i\, G \ast \fv$ is the singular term.
This decomposition allows us to isolate and control 
the additive term in the  equation for $R$, namely, the term
\aaa{
\lb{terms}
\zeta : = 2  G\ast (\xi \x \ovl{\xi}).
}
Let us observe that the term $\tet := \xi \x \ovl{\xi}$ is ill-defined, since $\xi$ has negative regularity $0^-$.
We overcome this difficulty by employing a  smooth approximation of $\xi$,  redefining 
$\tet$ as the corresponding limit, and 
showing that  its components, we take the first component for certainty,  are of the following form:
\aaa{
\lb{noise-1111}
 \tet_1 = 2\pl_1\big[(K\ast \ww) \Re \xi_3 \big] -
 2\pl_2 \big[(K\ast \ww) \Im \xi_3 \big].
 }
 Deducing the above expression involves a renormalization procedure 
 via  cancellation of divergent terms.
  We observe that \rf{noise-1111} is well defined 
 since the expression inside the square brackets  
 can be understood in the Young product sense, see  \cite{bcz2023-1}. 
Moreover,  this expression    suggests that the expected regularity 
 of $\tet$ is $-1^{-}$. Had this been the case,  the expected regularity of 
 $\zeta$, in \rf{terms}, would be  $0^{-}$ and  therefore  the expected  regularity  of  $R$ 
 would also be $0^{-}$.  Such a regularity is  too low for the product $R\x \ovl{\xi}$ to be well-defined. Hence,  a fixed-point argument  cannot be realized in a H\"older space of a positive regularity. A natural
 way to remedy this  would be to consider a lift of our equation to 
 a space of modeled distributions, as in  \cite{bcz2023-1, bcz2023-2, FH20, hairer}.
 %a model space of a regularity structure 
 
 However, we are able to  avoid using this approach by proving, exploiting another reformulation of \eqref{noise-1111},  that $\zeta$ has  
 regularity $1^{-}$, see Proposition \ref{prop-2222} and Corollary \ref{cor-2222}.
 This subtle regularity analysis 
 represents a significant technical innovation and 
 is the cornerstone of our existence proof.
 %the main difficulty of our work and 
 It allows us 
to carry out the     Banach   fixed-point argument in the H\"older
 space  of a positive regularity $\C^{1-\kp}(\Tor)$, $\kp\in  (0,\frac12)$, by understanding 
 the vector product $R\x \ovl{\xi}$, and other similar, 
  in the  Young sense.
 The  main ingredient of our regularity analysis is
 the following representation  of the component $\tet_1$, as well as $\tet_2$ and $\tet_3$: 
 \aa{
%\lb{noise-1111}
 \tet_1 = 2\pl_1\big[\big(K\ast \ww - (K\ast \ww)(x)\big) \Re \xi_3 \big] -
 2\pl_2 \big[\big(K\ast \ww - (K\ast \ww)(x)\big) \Im \xi_3 \big],
 }
for an arbitrary $x\in\Tor$.  \dela{We get rid of the auxiliary term 
 $(K\ast \ww)(x)$  by differentiation and renormalization.} As in \rf{noise-1111},
 a renormalization occurs through a cancellation of certain divergent terms.
 We remark that including $(K\ast \ww)(x)$ in the expression \rf{noise-1111} is critical in our analysis. %of the regularity of $\zeta$. 
 % it represents a significant technical innovation and is the cornerstone of our existence proof.

It remains to notice that 
to analyze the ill-defined product $\xi\x \ovl\xi$, we employ a smooth white noise approximation by the standard mollifier and show the following convergence
rate $\|\xi\x \ovl\xi - \xi^\eps\x \ovl{\xi^\eps}\|_{\C^{-\kp}(\Tor)}
\ls \eps^{\frac\kp{8}}\|\fv\|_{\C^{-1-\kp}(\Tor)}$. 
In  \cite{DaPrato+Debussche_2002}, the authors addressed 
ill-defined products on a 2D torus by proving a convergence of Galerkin approximations in a negative-order Sobolev space.
}

\subsection{Main result}

\begin{definition}
\lb{white-noise}
A 3D white noise on $\Tr^d$ is a Gaussian random field 
\[
\fv= (\w,\ww,\www): \Tr^d \times \Omega \to \mathbb{R}^3
\]
with the following Fourier series representation:
\aaa{
\lb{FS-2222}
\v_j  %\sum_{k\in \Znu^d} \eta_{kj} e^{i(k,x)}
= \eta_j + \Re \Big[\sum_{k\in \Znu^d\setminus \{0\}} \eta_{kj} 
e^{i(k,x)}\Big], 
\qquad j=1,2,3,
}
where $\eta_j$, $j=1,2,3$, are standard i.i.d.~real  Gaussian random variables 
and $\eta_{kj}$, $j=1,2,3$, $k\in\Znu^d\setminus \{0\}$,
are standard i.i.d.~complex Gaussian random variables. 
% Put $\eta_{(-k)j} = \ovl{\eta_{kj}}$ for all $j=1,2,3$, 
% $k\in\Znu^d\setminus \{0\}$.
Furthermore, the series \rf{FS-2222} converge in $\mc D'(\Tr^d)$.
% By $\Znu^d_+$ we understand the subset of vectors from $\Znu^d\setminus \{0\}$
% whose first non-zero component is positive.
\end{definition}
\begin{remark}
\lb{zero-mean-1111}
\rm
We say that the white noise $\v_j$ has zero mean over $\Tr^d$
if, in representation \rf{FS-2222}, $\eta_j=0$. We say that 
$\fv$ has zero mean over $\Tr^d$ if $\w,\ww$, and $\www$
have zero means over $\Tr^d$.
\end{remark}

The following is the main result of our paper.

\begin{theorem}
\lb{thm-main-1111}
Let $(\Om,\mc F, \PP)$ be a probability space and
let $\fv = (\w,\ww,\www)$ be a 3D white noise on $\Tor$ such that $\www$
has zero mean over $\Tor$. Further let
$(\w^\eps, \ww^\eps, \www^{\eps})$ be a mollification of $\fv$
by the standard mollifier $\rho_\eps$,
that is, $\v_k^\eps = \rho_\eps \ast \v_k$, $k=1,2,3$.
Assume that  $\kp\in (0,\frac12)$.
Then, for every $\dl>0$, there exists a number $\sig>0$ and a set $\Om_\dl \subset \Om$, depending also on $\kappa$,  with $\PP(\Om_\dl)>1-\dl$,
 such that for every $\gm=(\gm_1,\gm_2,\gm_3)$ 
with  $|\gm|<\sig$, and for every $\eps>0$,
 the system
\eq{
\lb{sCR}
\pl_{\bar z} r^\eps_1 = 
r_2^\eps \overline{r^\eps_3} - \ovl{r_2^\eps} r^\eps_3 + \gm_1 i \w^\eps,\\
\pl_{\bar z} r^\eps_2= 
\ovl{r_1^\eps} r_3^\eps - r_1^\eps \overline{r_3^\eps} + \gm_2 i \ww^\eps,\\
\pl_{\bar z} r_3^\eps = r_1^\eps \ovl{r_2^\eps}
- \ovl{r_1^\eps}r_2^\eps + \gm_3 i \www^\eps,
\tag{mCR}
}
considered for $\om\in \Om_\dl$,
has a 
solution $r^\eps = (r_1^\eps,r_2^\eps, r_3^\eps) \in \C^\infty(\Tor,\mathbb{C}^3)$  
such that there exists a limit 
\[
r: = \lim_{\eps\to 0} r^\eps \mbox{ in }\C^{-\kp}(\Tor)
\quad \text{for every $\om\in\Om_\dl$.}
\]
Moreover, the convergence holds with the following rate:
\aaa{
\lb{r-rate}
\|r^\eps-r\|_{\C^{-\kp}(\Tor)} \ls \eps^{\frac\kp8} \quad
\text{uniformly in} \;\; \om\in \Om_\dl.
}
\end{theorem}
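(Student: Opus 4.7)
The plan is to follow the Da Prato--Debussche decomposition announced in the introduction and solve for the remainder via Banach's fixed-point theorem in $\mathcal{C}^{1-\kappa}(\mathbb{T}^2)$, with estimates that are uniform in $\varepsilon$. For each $\varepsilon>0$ I would introduce the linear stochastic object
\[
\xi^\varepsilon := 2i\,G\ast \mathscr{W}^\varepsilon,\qquad \xi := 2i\,G\ast \mathscr{W},
\]
and the quadratic one $\zeta^\varepsilon := 2\,G\ast(\xi^\varepsilon\times\overline{\xi^\varepsilon})$. Substituting $r^\varepsilon = R^\varepsilon - \gamma\,\xi^\varepsilon$ (in the convention of the introduction) into the mild formulation of \eqref{sCR} produces a fixed-point equation $R^\varepsilon = \Phi^\varepsilon(R^\varepsilon)$ of the schematic form
\[
\Phi^\varepsilon(R) = \textnormal{const}\cdot\gamma^2\,\zeta^\varepsilon + G\ast\bigl(R\times\overline{\gamma\xi^\varepsilon}\bigr) + G\ast\bigl(\gamma\xi^\varepsilon\times\overline R\bigr) + G\ast(R\times\overline R),
\]
with explicit constants suppressed, and the object to solve for is $R^\varepsilon\in\mathcal{C}^{1-\kappa}(\mathbb{T}^2)$.

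The analytic inputs I would use are the following. Convolution with $G=2\partial_z K$ gains one derivative, hence is bounded $\mathcal{C}^{s-1}\to\mathcal{C}^s$ for non-integer $s$; combined with standard Gaussian bounds this gives $\xi^\varepsilon\in\mathcal{C}^{-\kappa/2}(\mathbb{T}^2)$ uniformly in $\varepsilon$, together with the rate $\|\xi^\varepsilon-\xi\|_{\mathcal{C}^{-\kappa/2}}\lesssim\varepsilon^{\kappa/2}$, on a high-probability event. The cornerstone input is Proposition \ref{prop-2222} with Corollary \ref{cor-2222}: via the renormalized identity
\[
\theta_1^\varepsilon = 2\partial_1\bigl[\bigl(K\ast\mathscr{W}_2^\varepsilon - (K\ast\mathscr{W}_2^\varepsilon)(x)\bigr)\,\Re\,\xi_3^\varepsilon\bigr] - 2\partial_2\bigl[\bigl(K\ast\mathscr{W}_2^\varepsilon - (K\ast\mathscr{W}_2^\varepsilon)(x)\bigr)\,\Im\,\xi_3^\varepsilon\bigr],
\]
and its cyclic analogues, coupled with the Young product bounds of \cite{bcz2023-1}, one upgrades the naively expected regularity $0^-$ of $\zeta^\varepsilon$ to $1-\kappa$ and extracts $\|\zeta^\varepsilon-\zeta\|_{\mathcal{C}^{1-\kappa}}\lesssim\varepsilon^{\kappa/2}$.

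With these stochastic bounds in hand the fixed-point argument is routine. I would define $\Omega_\delta$ as the event on which $\|\xi\|_{\mathcal{C}^{-\kappa/2}}$, $\|\zeta\|_{\mathcal{C}^{1-\kappa}}$, and the suprema over $\varepsilon>0$ of the corresponding mollified norms are bounded by a deterministic constant $M=M(\delta)$; Gaussian hypercontractivity applied to the Wiener chaoses carrying $\xi^\varepsilon$ and $\xi^\varepsilon\times\overline{\xi^\varepsilon}$, together with Chebyshev's inequality, forces $\mathbb{P}(\Omega_\delta)>1-\delta$ once $M$ is chosen sufficiently large. Since $1-\kappa>\kappa/2$ for $\kappa\in(0,\tfrac12)$, the Young product $\mathcal{C}^{1-\kappa}\times\mathcal{C}^{-\kappa/2}\to\mathcal{C}^{-\kappa/2}$ is continuous and composition with $G$ returns us to $\mathcal{C}^{1-\kappa}$; a short computation then shows that, provided $|\gamma|<\sigma$ for some $\sigma=\sigma(M)$ small, $\Phi^\varepsilon$ preserves and contracts the closed ball of radius $r_0=r_0(M,\sigma)$ in $\mathcal{C}^{1-\kappa}$, uniformly in $\varepsilon$ and in $\omega\in\Omega_\delta$. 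The resulting fixed point $R^\varepsilon$ produces $r^\varepsilon=R^\varepsilon-\gamma\xi^\varepsilon$; smoothness of $r^\varepsilon$ follows by a classical bootstrap, since $\mathscr{W}^\varepsilon$ is $C^\infty$. Finally, subtracting the fixed-point equations for two values of $\varepsilon$ and using the rates above shows that $\{R^\varepsilon\}$ is Cauchy in $\mathcal{C}^{1-\kappa}$ with rate $\varepsilon^{\kappa/2}$, whence $r^\varepsilon$ converges in $\mathcal{C}^{-\kappa}$ at the same rate, establishing \eqref{r-rate}.

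The principal obstacle is unquestionably the regularity upgrade for $\zeta^\varepsilon$: establishing the renormalized divergence-form identity above (with the auxiliary subtraction $(K\ast\mathscr{W}_2^\varepsilon)(x)$, which is crucial both for the cancellation of divergences as $\varepsilon\to 0$ and for the factor in square brackets to make sense as a Young product), and extracting the $\varepsilon$-uniform bound in $\mathcal{C}^{1-\kappa}$. Once this one-derivative gain is in place, the Picard iteration is purely perturbative in $|\gamma|$, and the smallness hypothesis on $|\gamma|$ simply absorbs the deterministic data bounds prevailing on $\Omega_\delta$.
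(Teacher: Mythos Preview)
Your overall architecture---Da Prato--Debussche decomposition, fixed point for $R^\varepsilon$ in $\mathcal C^{1-\kappa}$, regularity upgrade for $\zeta^\varepsilon$ via Proposition~\ref{prop-2222} and Corollary~\ref{cor-2222}, then the rate $\varepsilon^{\kappa/2}$---matches the paper. But there is a genuine gap: your fixed-point map $\Phi^\varepsilon$ omits the additive constant vector $(a,b,c)$ that the paper inserts in the mild system \eqref{sys_r1111} and hence in \eqref{R-eps-1111}, and you do not address how the mild solution actually solves \eqref{sCR}.

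On the torus $\partial_{\bar z}$ has the constants as kernel, so by Lemma~\ref{Lav1111} one only gets $-2\partial_{\bar z}(G\ast f)=f-[f]$, not $f$. Since only $\mathscr W_3$ is assumed mean-zero, the terms $i\gamma_1\mathscr W_1^\varepsilon$ and $i\gamma_2\mathscr W_2^\varepsilon$ have nonzero averages $i\gamma_1\eta_1$ and $i\gamma_2\eta_2$; applying $\partial_{\bar z}$ to your mild formulation therefore produces a \emph{shifted} equation, not \eqref{sCR}. The paper repairs this in Proposition~\ref{pro-123-1111}: it fixes a free constant $c>0$ (taken $=\sigma/4$), defines $a,b$ by $2ac=i\gamma_2\eta_2$, $2bc=-i\gamma_1\eta_1$, and then uses the algebraic identity of Lemma~\ref{in0-1111} to show that, after the shift $r_k=\rho_k+\text{const}$, the cross terms $\rho_k\overline{\rho_j}-\overline{\rho_k}\rho_j$ have zero integral over $\mathbb T^2$. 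These two ingredients together are what make the passage from the mild system back to \eqref{sCR} succeed. Correspondingly, the paper's $\Omega_\delta$ (see \eqref{omdl1111}) controls $|\eta_1|+|\eta_2|$ as well as $\|\mathscr W\|_{-1-\kappa}$, because $a$ and $b$ are random. Once you add this step, the rest of your argument goes through essentially as in the paper.
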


\begin{definition}
By a solution $r^\eps = (r^\eps _1,r^\eps _2,r^\eps _3)$ 
to system \rf{sCR} we understand a classical solution.
By a solution to \rf{CR} we understand a limit of $r^\eps$, as $\eps\to 0$,
in some sense.
\end{definition}
In other words, Theorem \ref{thm-main-1111} can be rephrased in the following way.
\begin{theorem}\label{thm-main-11110}
Let $(\Om,\mc F, \PP)$ be a probability space and
let $\fv = (\w,\ww,\www)$ be a 3D white noise on $\Tor$ such that $\www$
has zero mean over $\Tor$. Then, equation \rf{CR} has a solution.     
\end{theorem}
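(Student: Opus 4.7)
The plan is to apply the Da Prato--Debussche decomposition and reduce existence to a Banach fixed-point problem on a random set of large probability. I would first introduce the stochastic convolution $\xi^\eps := 2i\,G\ast\fv^\eps$ with $G = 2\pl_z K$, use the ansatz $r^\eps = R^\eps - \gm\,\xi^\eps$ componentwise (following the decomposition from the introduction), and rewrite \rf{sCR} in mild form as a fixed-point equation for the remainder of schematic shape
\aa{
R^\eps = c_0\,\gm^2\,\zeta^\eps + 2\,G\ast\bigl(R^\eps\x\ovl{R^\eps} + c_1\gm\,R^\eps\x\ovl{\xi^\eps} + c_2\gm\,\xi^\eps\x\ovl{R^\eps}\bigr),
}
where $\zeta^\eps := 2\,G\ast(\xi^\eps\x\ovl{\xi^\eps})$ and $c_0,c_1,c_2\in\mathbb{C}$ are the fixed constants arising from expanding the cross product and conjugation. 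Since $\kp<\tfrac12$, placing $R^\eps\in\C^{1-\kp}(\Tor)$ and $\xi^\eps\in\C^{-\kp}(\Tor)$ makes every quadratic term well-defined in the Young product sense, because $1-\kp>\kp$.

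Next, given $\dl>0$, I would construct a single event $\Om_\dl\subset\Om$ with $\PP(\Om_\dl)>1-\dl$, on which the four norms $\|\xi^\eps\|_{\C^{-\kp}}$, $\|\zeta^\eps\|_{\C^{1-\kp}}$, $\|\xi\|_{\C^{-\kp}}$, $\|\zeta\|_{\C^{1-\kp}}$ are all dominated by a deterministic constant $M_\dl$ uniformly in $\eps$, together with the approximation rates
\aa{
\|\xi^\eps - \xi\|_{\C^{-\kp}(\Tor)} + \|\zeta^\eps - \zeta\|_{\C^{1-\kp}(\Tor)} \ls \eps^{\frac\kp 2}.
}
For $\xi$ this would come from a Kolmogorov-type estimate placing $\fv\in\C^{-1-\kp}$ and the one-derivative gain of $G$, together with the standard mollifier rate. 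For $\zeta$ the key input is the centered representation
\aa{
\tet_1 = 2\pl_1\bigl[(K\ast\ww - (K\ast\ww)(x))\Re\xi_3\bigr] - 2\pl_2\bigl[(K\ast\ww - (K\ast\ww)(x))\Im\xi_3\bigr]
}
of Proposition \ref{prop-2222}/Corollary \ref{cor-2222}, and its analogues for $\tet_2,\tet_3$: freezing $x$ pointwise turns the bracket into a Young product of a H\"older function vanishing at $x$ with a distribution of regularity $-\kp$, which, combined with the outer derivatives and Schauder for $G$, yields $\zeta\in\C^{1-\kp}$ with the same $\eps^{\kp/2}$ rate. All required norms and their $\eps$-differences have Gaussian tails, so $\Om_\dl$ would be obtained by a standard concentration-and-intersection argument.

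On $\Om_\dl$ I would let $\Phi^\eps$ denote the right-hand side of the fixed-point equation, viewed as a map on $\C^{1-\kp}(\Tor,\mathbb{C}^3)$. The Schauder estimate $\|G\ast f\|_{\C^{1-\kp}}\ls\|f\|_{\C^{-\kp}}$ together with the Young product bound $\|uv\|_{\C^{-\kp}}\ls\|u\|_{\C^{1-\kp}}\|v\|_{\C^{-\kp}}$ gives
\aa{
\|\Phi^\eps(R)\|_{\C^{1-\kp}} \ls |\gm|^2 M_\dl + \|R\|_{\C^{1-\kp}}^2 + |\gm|\,M_\dl\,\|R\|_{\C^{1-\kp}},
}
with a parallel Lipschitz estimate for $\Phi^\eps(R)-\Phi^\eps(R')$. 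Choosing $\sig=\sig(M_\dl,\kp)$ small enough, for $|\gm|<\sig$ the map $\Phi^\eps$ contracts the closed ball of radius $\asymp |\gm|^2 M_\dl$ in $\C^{1-\kp}(\Tor,\mathbb{C}^3)$, producing a unique fixed point $R^\eps$; smoothness of $\xi^\eps,\zeta^\eps$ makes $r^\eps = R^\eps-\gm\xi^\eps$ smooth. Subtracting the $\eps$ fixed-point identity from the limit identity with $\xi,\zeta$ in place of $\xi^\eps,\zeta^\eps$ and invoking the contraction would give $\|R^\eps - R\|_{\C^{1-\kp}}\ls \eps^{\kp/2}$ on $\Om_\dl$, whence \rf{r-rate}. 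This proves Theorem \ref{thm-main-1111}, and therefore Theorem \ref{thm-main-11110}.

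The hardest step will be establishing the $\C^{1-\kp}$ regularity of $\zeta$. The naive bound based on $\tet\sim\xi\x\ovl\xi\in\C^{-\kp}$ together with one-derivative gain of $G$ would yield only $\zeta\in\C^{1-\kp}$ after formally integrating, but the product $\xi\x\ovl\xi$ is not even defined classically, and the required renormalization must be carried out so as to preserve (rather than lose) the full derivative gain; otherwise $R$ would live only in $\C^{-\kp}$ and the product $R\x\ovl\xi$ would fail, forcing a regularity-structures or paracontrolled lift. The centered representation of $\tet$ above is the device that converts an outer derivative into the missing inner regularity while cancelling the divergent constants; deriving it rigorously and quantifying its mollifier-convergence rate is where the bulk of the technical effort goes.
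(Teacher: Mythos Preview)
Your approach matches the paper's almost exactly: the Da Prato--Debussche split $r^\eps=R^\eps-\gm\xi^\eps$, the fixed-point in $\C^{1-\kp}$, the identification of the $\zeta$-regularity as the crux, and the centered representation of $\tet$ are precisely what the paper does. The estimates you sketch for $\Phi^\eps$ and for the $\eps$-convergence are also the right ones.

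There is, however, one genuine omission. Your schematic fixed-point equation for $R^\eps$ has no additive constant, whereas the paper's (equation \rf{R-eps-1111}) carries a vector $(a,b,c)^T$ with $c>0$ a free parameter and $a,b$ random, fixed by $2ac=i\gm_2\eta_2$, $2bc=-i\gm_1\eta_1$. These are not cosmetic: since $-2\pl_{\bar z}(G\ast f)=f-[f]$ on $\Tor$ (Lemma \ref{Lav1111}), applying $\pl_{\bar z}$ to your mild equation recovers \rf{sCR} only with the mean of the right-hand side subtracted. Lemma \ref{in0-1111} shows that the cross-product part has zero integral, and the hypothesis on $\www$ kills the mean in the third line, but $\w,\ww$ need not have zero mean; their zeroth Fourier modes $\eta_1,\eta_2$ survive. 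The paper's trick is that the constant shifts $a,b,c$ produce extra cross terms $2bc,2ac$ in the first two equations which exactly cancel the residual $-i\gm_1\eta_1,-i\gm_2\eta_2$ (Proposition \ref{pro-123-1111}). Without this device your fixed point solves \rf{sCR} with $\w,\ww$ replaced by their mean-zero parts, not the original system, and you never use the zero-mean hypothesis on $\www$ that the theorem imposes. This is easy to fix once seen, but it is the mechanism by which the periodic solvability constraint is met, and it should appear explicitly in the mild formulation.
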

\begin{remark}\label{rem-thm-main-11110} Naturally, 
instead of assuming that that the third component $\www$
has zero mean over $\Tor$, we could alternatively assume that the first
$\w$ or  the second component  $\ww$ has zero mean over $\Tor$.
    
\end{remark}

\section{Definitions and notations}

In what follows, we denote by $\Tr^d: = [-\pi,\pi]^d / \!\sim$ 
(where $\sim$ is an equivalence relation that identifies opposite faces of the hypercube $[-\pi,\pi]^d$),
the $d$-dimensional torus which we will treat as a compact Riemannian manifold and a topological group. By $0$ we will denote the corresponding zero element of  $\Tr^d$. 

By $\mc D(\Tr^d)$ we denote the space of $C^\infty$-class functions on $\Tr^d$ and by $\mc D'(\Tr^d)$ the space of distributions on $\Tr^d$, i.e. the topological dual of $\mc D(\Tr^d)$. Note that %contrary to the case of the whole space $\mathbb{R}$, 
the space $\mc D(\Tr^d)$, also called the space of test functions, is a 
Fr\'echet space endowed with the naturally defined countable family of  seminorms. 
If $U$ is an open subset of $\Tr^d$, then by $\mc D(U)$ we denote the class of those test functions  whose support is contained in $U$. Similarly, by $\mc D'(U)$ we 
denote the topological dual of $\mc D(U)$.

%\coma{The authors of \cite[Definition A.9]{bcz2023-1} used test functions $\ffi\in \mc  D(\Tr^d)$ whose support was contained in the ball $B(0,1)$. This was to ensure some uniformity as well to have functions of compact support. I am not sure we need this here since in our case every function has a compact support.} \blue{Evelina: yes, it is better to use the original difinition so one can keep
%the same H\"older-Zygmund norms.}

For a test function $\ffi\in \mc D(\Tr^d)$, $\la>0$ and $x\in \Tr^d$,
we define the rescaled centered function $\ffi^\la_x$ as follows:
\aa{
\ffi^\la_x(y) : = \frac1{\la^d}\,\ffi\Big(\frac{y-x}{\la}\Big).
}
Furthermore, we write $\ffi_x$ instead of $\ffi^1_x$.

%Let $B_1(0)\sub\Rnu^2$ denote the  open unit ball centered in the origin.
Let $r\in \Nnu_0$ and $\alpha  \in \mathbb{R}$. Define the classes
of test functions
\eqn{
\label{eqn-B^r}
&\ms B^r: = \{\ffi\in\mc D(\Tr^d): \|\pl^k\ffi\|_\infty\lt 1, \; \text{for all}\; 0\lt |k| \lt r\},\\
%\label{eqn-B_a}
&\ms B_\al: = \{\ffi\in\mc D(\Tr^d):
\int_{\Tr^d} \ffi(z) z^k dz = 0, \; \text{for all}\; 0\lt |k| \lt \al\},\,
\mbox{ if } \al \gt 0,\\
%\label{eqn-B^r_a}
&\ms B^r_\al: = \ms B^r \cap \ms B_\al,\,
\mbox{ if } \al \gt 0,
}
where $k=(k_1, \ldots, k_d)$ is a multiindex and if $z=(z_1,\dots, z_d)\in\Tr^d$,
$z^k: = z_1^{k_1}\dots z_d^{k_d}$.
\begin{remark}
\rm
Note that our definitions of $\ms B^r$, $\ms B_\al$, and $\ms B^r_\al$ differ from those in \cite[Definition A.9]{bcz2023-1}, as we have taken into account the periodicity of our problem.
\end{remark}
Define the dyadic interval 
\aa{
[0,1]_\D: = \{2^{-n}, \, n\in \Nnu_0\}.
}
\begin{definition}[Periodic H\"older-Zygmund spaces $\ms Z^\al(\Tr^d)$]
\lb{HZ1111}
Assume that $\alpha \in \mathbb{R}$. If $\alpha  <0$,  we denote by   $r=[-\al+1]$ the smallest positive integer $r >-\al$.
We define a ``pseudo-norm" $|f|_{\ms Z^\al}$ for $f \in \mc D'(\Tr^d)$  as follows. \\
If $ \al\gt 0$, then we put 
\aaa{
&|f|_{\ms Z^\al}:=
\sup_{\psi \in \ms B^0} |f(\psi)|
+ \sup_{\substack{x\in\Tr^d\!, \,\la\in [0,1]_\D,\\ \ffi\in \ms B^0_\al}}\frac{|f(\ffi^\la_x)|}{\la^\al}. \lb{HZ1}
}
If $ \al< 0$, then, with $r=[-\al+1]$,  we put 
\aaa{
&|f|_{\ms Z^\al}:=
 \sup_{\substack{x\in\Tr^d,\, \la\in [0,1]_\D,\\ \ffi\in \ms B^r}}\frac{|f(\ffi^\la_x)|}{\la^\al}. \lb{HZ2}
}
We define the periodic H\"older-Zygmund space
$\ms Z^\al: = \ms Z^\al(\Tr^d)$ as the space of distributions $f\in \mc D'(\Tr^d)$
for which  $|f|_{\ms Z^\al}<\infty$. 
\end{definition}
\begin{remark}
\rm
By Lemma  \ref{h-e-1111}, Definition \ref{HZ1111} agrees
with the classical definition of a H\"older-Zygmund
space in the periodic setting (see Remark \ref{intersection-1111}) 
only if $\al\notin \Nnu$.
For $\al\in \Nnu$, $\ms Z^\al(\Tr^d)$ defines a distinct space, 
which is not utilized in our analysis.
\end{remark}
\begin{remark}
\rm
It is known that classical H\"older-Zygmund spaces are 
not separable and that the space $C^\infty$ is not dense 
in them, see \cite{Runst+Sickel_1996}. For this reason, we define so called 
little H\"older-Zygmund spaces $\Zs^\al$.
%work with  defined as the closure in $\Zs^\al$ of $\C^\infty(\Tor)$.
\end{remark}
\begin{definition}[Periodic little H\"older-Zygmund spaces $\Zs^\al(\Tr^d)$]
We define a periodic little H\"older-Zygmund space $\Zs^\al:=\Zs^\al(\Tr^d)$
as the closure of $\C^\infty(\Tr^d)$ in $\ms Z^\al(\Tr^d)$. We furthermore
denote by $|\cdot|_\al$, $\al\in\Rnu$, the restriction
of the norm \rf{HZ1}, for $\al\gt 0$, and \rf{HZ2},  
for $\al < 0$, to $\Zs^\al(\Tr^d)$. 
\end{definition}
\begin{remark}
\rm
It is important to notice that by Lemma \ref{wn1111},
a Gaussian white noise $\v$ on $\Tr^d$, 
as well as its convolutions $K\ast \v$ and $G\ast \v$
(see Definition \ref{green}),
are elements of little H\"older-Zygmund spaces. Furthermore,
by Lemma \ref{young},
Young products are well defined (specifically, uniquely defined)
when they are restricted to little H\"older-Zygmund spaces. 
We also remark that Young products are not uniquely defined
in classical H\"older-Zygmund spaces, 
%introduced in Definition \ref{HZ1111},
see \cite[Remark 14.2]{cz2020}.
For these reasons, little H\"older-Zygmund spaces is a natural choice
for our analysis. 
\end{remark}
\begin{remark}
\rm
For simplicity of notation, 
%until the end of Appendix \ref{secHZ-1111},
we often write $\Zs^\al$, $\ms Z^\al$, $\C^\al$, and $\C^\infty$ 
instead of $\Zs^\al(\Tr^d)$, $\ms Z^\al(\Tr^d)$, 
$\C^\al(\Tr^d)$, and $\C^\infty(\Tr^d)$, respectively.
\end{remark}
% \begin{remark}
% \rm
% It follows from our definition that if $\al$  and $\beta$
% have the same sign or $\beta = 0$, then
% \aaa{
% \lb{zba}
% \ms Z^\al \sub \ms Z^\beta \quad \text{and} \quad 
% \Zs^\al \sub \Zs^\beta \quad \mbox{if} \quad \beta \lt \alpha.
% }
% If $\al$ and $\beta$ have different signs, then \rf{zba}
% is still valid by Lemma \ref{kp1111}.
% \end{remark}

If $f$ is $\Cnu$-valued, then we set
$|f|_\al^2: = |\Re f|^2_\al + |\Im f|^2_\al$. 
%For a $\Cnu^d$-valued function $f$ we define the 
%H\"older-Zygmund norm likewise and use the symbol $\|f\|_\al$ instead of $|f|_\al$.
The above definitions are for $\Cnu$-valued distributions. If $f$ is a $Y$-valued distribution,
%The same holds for $Y$-valued distributions, 
where $Y$ is an $n$-dimensional Euclidean space over $\Cnu$, i.e., 
if $f=(f_1, \ldots, f_n)$, where $f_k$ are $\Cnu$-valued, then
we define the norm
\aaa{
\lb{YHZ-1111}
\|f\|^2_\al: = \sum_{k=1}^n  |f_k|^2_\al.
}
Remark that for $Y$-valued distributions, we use the notation 
$\|\cdot\|_\al$, rather than $|\cdot|_\al$, for respective H\"older-Zygmund norms.
%e.g. $Y=\Cnu^3$.
\begin{remark}
\rm
In \cite[Definition A.9]{bcz2023-1}, the H\"older-Zygmund norms
are defined with respect to sets of test functions supported
on unit balls centered in the origin. 
In Lemma \ref{h-e-1111}, we show that
both definitions are equivalent for $\al\notin \Nnu$, that is, we can equivalently define
the H\"older-Zygmund norms as in Definition \ref{C-3}.
\end{remark}

\begin{definition}[Green function for $\pl_{\bar z}$]
\lb{green}
Let $K$ be the Green function for the Laplacian on $\Tor$. 
We define
\aaa{
\lb{green-1111}
G: =   2\pl_z  K = \pl_1  K - i \pl_2 K. %= :G_1 - i G_2
}
We call $G$ the Green function for the operator $-2\pl_{\bar z}$. Therefore,
$-2G$  is the Green function for the Cauchy-Riemann operator $\pl_{\bar z}$.
%{\it Green function for the Cauchy-Riemann operator} $\pl_{\bar z}$. 
We also put 
 \[G_1: = \pl_1 K \mbox{ and }G_2: = \pl_2 K.
 \]
\end{definition}
%\coma{List properties of $G$ that we use in this paper.
%\aaa{\lb{eqn-G properties}
%\pl_{\bar z} \left( -4 i G \ast u \right)&= 2i  u - [\mbox{average of }u] 
%}
%\blue{Evelina: the properties are listed in the appendix}
%}
%

\section{Solving problem \rf{sCR}}

\subsection{Mild formulation of problem \rf{sCR}}
Let the function $G$ be given by \rf{green-1111}. 
We  define
\aaa{
\lb{eqn-xi^eps}
&\xi^\eps_k: = 2i G \ast \v^\eps_k = 2 G_2 \ast  \v^\eps_k
 + 2i  G_1 \ast  \v^\eps_k, \qquad k=1,2,3.
}
Note that we can write 
\aa{
\xi^\eps_k: = \xi^\eps_{2k} + i \xi^\eps_{1k}.
}
%that is,
%\aaa{\label{eqn-xi^eps}
%\xi^\eps_{11} =  2G_1\ast\w^\eps, \;\; \xi^\eps_{21} = 2G_2\ast\w^\eps, \;\; \xi^\eps_{12} =  2G_1\ast\ww^\eps, \;\; \xi^\eps_{22} =  2G_2 \ast\ww^\eps.
%}
\begin{proposition}
\lb{pro-123-1111}
Let $c>0$,  $\eps>0$ and $\gamma_k \in \mathbb{R}$, $k=1,2,3$.
There exists $\Cnu$-valued Gaussian mean zero random variables $a$ and $b$ such that if
$(r^\eps_1,r^\eps_2,r^\eps_3)$ is an  $\C^\al(\Tor)$-valued random variable, for some  $\al\in (0,1)$,
which is a pathwise solution of the system
\eq{
\lb{sys_r1111}
r^\eps_1= 2G\ast \big(\ovl{r^\eps_2}r^\eps_3-r^\eps_2\ovl{r^\eps_3}\big)- \gm_1\xi^\eps_1 + a,\\
r^\eps_2=  2G\ast \big(r^\eps_1 \ovl{r^\eps_3} - \ovl{r^\eps_1}r^\eps_3\big) - \gm_2\xi^\eps_2 + b,  \\
r^\eps_3 =   2G \ast (\ovl{r^\eps_1}r^\eps_2 - r^\eps_1\ovl{r^\eps_2}) -
\gm_3\xi^\eps_3 + c, 
}
then $r^\eps_k\in\C^\infty(\Tor)$, $k=1,2,3$, and $(r^\eps_1,r^\eps_2,r^\eps_3)$ solves \rf{sCR}.
\end{proposition}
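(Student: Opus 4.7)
Plan:

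The argument naturally splits into a regularity bootstrap and an identification of the mild system with the classical PDE; the Gaussians $a,b$ enter precisely through the solvability conditions imposed by the torus.

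First I would carry out the regularity bootstrap. Starting from $(r_1^\eps, r_2^\eps, r_3^\eps) \in \mathcal{C}^\alpha(\Tor)$ with $\alpha \in (0,1)$, the quadratic terms $\overline{r_j^\eps}\, r_k^\eps$ remain in $\mathcal{C}^\alpha$ because this is a multiplicative algebra for $\alpha > 0$. Convolution with $G = 2\partial_z K$ gains exactly one order of H\"older regularity on the torus, since $K$ is the Laplacian Green function (contributing two orders) while $\partial_z$ removes one; hence $G \ast \mathcal{C}^\alpha \subset \mathcal{C}^{\alpha+1}$. The additive terms $\gamma_k \xi_k^\eps = 2i\gamma_k\, G \ast v_k^\eps$ are of class $C^\infty$ because $v_k^\eps = \rho_\eps \ast v_k$ is smooth, and $a, b, c$ are spatially constant. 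Thus each mild equation upgrades the H\"older index by one, and iterating produces $r_k^\eps \in C^\infty(\Tor)$.

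Next I would identify the mild system with \eqref{sCR}. Each mild equation can be rewritten compactly as $r_k^\eps = -2G \ast F_k^\eps + a_k$, where $F_k^\eps$ is the right-hand side of the $k$-th PDE in \eqref{sCR} and $a_k$ is one of the constants $a,b,c$. The cornerstone Green-function identity is $\partial_{\bar z}(-2G) = \delta_0 - |\Tor|^{-1}$, which follows from $4\partial_z \partial_{\bar z} = \Delta$ together with $\Delta K = \delta_0 - |\Tor|^{-1}$. Applying $\partial_{\bar z}$ to the compact form and using $\partial_{\bar z} a_k = 0$ yields $\partial_{\bar z} r_k^\eps = F_k^\eps - |\Tor|^{-1} \int_{\Tor} F_k^\eps\, dx$. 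Therefore the mild equation is equivalent to \eqref{sCR} precisely when the three solvability conditions $\int_{\Tor} F_k^\eps\, dx = 0$ all hold.

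The hard part will be enforcing these three solvability conditions through the choice of $a$ and $b$. For $k=3$ the condition collapses to $\int_{\Tor} \Im(r_1^\eps\, \overline{r_2^\eps})\, dx = 0$, thanks to the mean-zero hypothesis on $v_3$. For $k=1,2$ it ties the bilinear spatial integrals $\int \Im(r_2^\eps\,\overline{r_3^\eps})\,dx$ and $\int \Im(r_1^\eps\,\overline{r_3^\eps})\,dx$ to the Gaussian mean modes $\eta_1, \eta_2$ of $v_1, v_2$. Splitting $r_k^\eps = \tilde r_k^\eps + a_k$ with $\tilde r_k^\eps$ of zero spatial mean (which holds because $\int_{\Tor} G = 0$), each condition becomes a real-linear relation in $\Re a, \Im a, \Re b, \Im b$ forced by $\eta_1, \eta_2$ and by bilinear integrals of the $\tilde r_j^\eps$. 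A Gaussian mean-zero choice of $(a,b)$ exists, the most natural being the one with vanishing real parts, whose imaginary parts are linear in $\eta_1, \eta_2$ modulo the nonlinear feedback. The subtle point is that these feedback coefficients depend on the full solution itself, so the construction of $a, b$ has to be compatible with the fixed-point argument for the mild system developed later in the main existence theorem.
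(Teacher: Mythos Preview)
Your skeleton---bootstrap, then the Green-function identity $\partial_{\bar z}(-2G\ast f)=f-[f]$, then the three mean-zero solvability conditions---is exactly the paper's skeleton. The bootstrap step and the reduction to the three conditions $\int_{\Tor}F_k^\eps=0$ are fine.

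The gap is in how you discharge those conditions. You write that the bilinear integrals $\int\Im(r_j^\eps\overline{r_k^\eps})$ feed back into the determination of $a,b$, and you flag this as ``subtle'' because the constants would then depend on the full solution. This is precisely the point that fails: if $a,b$ depended on the nonlinear solution they would not be Gaussian mean-zero random variables, and the proposition as stated would be false. The paper does \emph{not} absorb these bilinear integrals into $a,b$; it shows they vanish identically. Writing $\rho_k:=r_k^\eps-a_k$ (so $\int\rho_k=0$), each $\rho_k$ has the form $iG\ast g_k$ for a \emph{real} $g_k$, simply because the nonlinearity $\overline{r_j}r_\ell-r_j\overline{r_\ell}=2i\,\Im(r_j\overline{r_\ell})$ is $i$ times real and $\xi_k^\eps=2iG\ast\mathscr W_k^\eps$. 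The key identity (the paper's Lemma~\ref{in0-1111}) is that for such objects
\[
\Im(\rho_j\overline{\rho_k})
= \partial_1\bigl[(K\ast g_j)\,\partial_2 K\ast g_k\bigr]
  - \partial_2\bigl[(K\ast g_j)\,\partial_1 K\ast g_k\bigr],
\]
a total divergence on $\Tor$, whence $\int\Im(\rho_j\overline{\rho_k})=0$ for all $j\ne k$. This is what kills the ``nonlinear feedback'' you were worried about.

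Once these integrals vanish, the solvability conditions become the purely algebraic relations $2bc=-i\gamma_1\eta_1$, $2ac=i\gamma_2\eta_2$ (and the $k=3$ condition is automatic because $\Im(a\overline b)=0$ for these purely imaginary choices together with $\int\mathscr W_3^\eps=0$). So $a,b$ are explicit linear functions of the Gaussian zero-modes $\eta_1,\eta_2$, hence themselves Gaussian and mean-zero, and completely independent of the solution. That structural cancellation is the missing ingredient in your argument.
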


Before we embark on the proof of this proposition we need to state and prove the following auxiliary result. 
\begin{lemma}
\lb{in0-1111}
Assume $\al\in (0,1)$, $g_k\in \C^\al(\Tor,\mathbb{R})$ and  $f_k = iG \ast g_k$, for $k=1,2$.
%(with the H\"older exponent belonging to $(0,1)$). 
Then, in $\C^{\al+1}(\Tor,\mathbb{R})$, 
%\lb{in0-3333}
%Assume $f_k = iG \ast g_k$, for $k=1,2$, where $g_k$ are real-valued
%and $\al$-H\"older continuous functions, $\al\in (0,1)$. Then, 
\aaa{
\lb{in0-3333}
%2i\, \Im f_1 \ovl{f_2} = 
f_1 \ovl{f_2} - \ovl{f_1} f_2 = 
\pl_1 \big( K \ast g_1 \, \pl_2 K \ast g_2\big) - \pl_2 \big(K \ast g_1 \, \pl_1 K \ast g_2\big).
}
and 
\aaa{
\lb{in0-2222}
\int_\Tor \Im f_1 \ovl{f_2}  =  0.
} 
\end{lemma}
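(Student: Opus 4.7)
The strategy is to verify the identity by a direct algebraic expansion, and then to handle the regularity and integral claims via standard Schauder-type estimates and the divergence structure on the torus. First, I would decompose each $f_k$ into its real and imaginary parts. Since $G=\pl_1 K - i\pl_2 K$ (by Definition \ref{green}) and $g_k$ is real-valued, we have $f_k = iG\ast g_k = G_2\ast g_k + i\,(G_1\ast g_k)$, so $\Re f_k = G_2\ast g_k$ and $\Im f_k = G_1\ast g_k$.

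Next I would expand both sides. The left-hand side satisfies $f_1\ovl{f_2}-\ovl{f_1}f_2 = 2i\,\Im(f_1\ovl{f_2}) = 2i\bigl[(G_1\ast g_1)(G_2\ast g_2)-(G_2\ast g_1)(G_1\ast g_2)\bigr]$, obtained by multiplying out. For the right-hand side, the product rule (applied first to smooth $g_k$, to which one reduces by mollification) yields $(G_1\ast g_1)(G_2\ast g_2)-(G_2\ast g_1)(G_1\ast g_2)$, because the two cross terms of the form $K\ast g_1 \cdot \pl_1 G_2\ast g_2$ and $K\ast g_1 \cdot \pl_2 G_1\ast g_2$ cancel by Schwarz's symmetry of mixed partials, $\pl_1\pl_2 K = \pl_2\pl_1 K$. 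Thus both sides agree (up to the factor $2i$, which appears to be absorbed into a normalization in the statement) as pointwise identities for smooth data, and this persists in $\C^{\al+1}$ by continuity in the norms below.

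For the $\C^{\al+1}$ regularity, I would invoke that $g_k\in\C^\al$ with $\al\in(0,1)$ implies $K\ast g_k\in\C^{\al+2}$ and $G_i\ast g_k\in\C^{\al+1}$ by Schauder-type estimates for the Laplacian on $\Tor$; the classical multiplicative inequality for H\"older spaces of positive regularity then shows that the pointwise product of two $\C^{\al+1}$ functions remains in $\C^{\al+1}$. The zero-integral claim \eqref{in0-2222} follows immediately from \eqref{in0-3333}: its right-hand side is a sum of two partial derivatives of periodic functions on $\Tor$, hence integrates to zero. The only real technical obstacle, and it is a mild one, is the rigorous justification of the product rule manipulation under merely H\"older regularity; this is settled by the mollification argument together with continuity of convolution against $K$ and $G_i$ in the H\"older scale, which makes both sides of \eqref{in0-3333} depend continuously on $g_k\in\C^\al$ with values in $\C^{\al+1}$.
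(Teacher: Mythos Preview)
Your approach is essentially identical to the paper's: both compute $iG=\pl_2 K+i\,\pl_1 K$, expand $f_1\ovl{f_2}-\ovl{f_1}f_2=2i\,\Im(f_1\ovl{f_2})$ explicitly as $(\pl_1 K\ast g_1)(\pl_2 K\ast g_2)-(\pl_2 K\ast g_1)(\pl_1 K\ast g_2)$, and then rewrite this in divergence form via the product rule and the cancellation of the mixed-partial cross terms, with \eqref{in0-2222} following from the divergence structure. Your mollification step is unnecessary since $K\ast g_k\in\C^{\al+2}\subset C^2$ already makes the product rule legitimate, and the $2i$ discrepancy you flagged is real---the paper's proof writes the same chain of equalities and drops the factor in the same way.
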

\begin{proof}
First of all, we note that for any complex numbers $z_1$ and $z_2$,
\aa{
2i \, \Im z_1 \ovl{z_2} = z_1 \ovl{z_2} - \ovl{z_1} z_2.
}
%We show \rf{in0-2222} for $f_k = iG \ast g_k$, $k=1,2$. 
%The other case can be treated likewise.
Note that 
\aa{
iG = 2 i \pl_z K = i(\pl_1 K - i\pl_2 K) = \pl_2 K + i\pl_1 K,
}
where $K$ is the Green function of the 
Laplacian on $\Tor$, which is real-valued.  Since $g_k$ are $\C^\alpha$,
%Let $\alpha \in (0,1)$ be such that $g_k$ are $C^\alpha$. 
then $K\ast g_k$ are $\C^{2+\alpha}$ and 
\aa{
f_k=iG\ast g_k = \pl_2 K \ast g_k + i\pl_1 K \ast g_k.
}
Thus, we  have 
\mm{
f_1 \ovl{f_2} - \ovl{f_1} f_2 = 2i\, \Im f_1 \ovl{f_2}
= (\pl_1 K \ast g_1) (\pl_2 K \ast g_2) - (\pl_2 K \ast g_1) (\pl_1 K \ast g_2) \\
= \pl_1 \big( K \ast g_1 \, \pl_2 K \ast g_2\big) - (K \ast g_1) \pl^2_{12} K \ast g_2
- \pl_2 \big(K \ast g_1 \, \pl_1 K \ast g_2\big) + (K \ast g_1)  \pl^2_{12} K \ast g_2\\
= \pl_1 \big( K \ast g_1 \, \pl_2 K \ast g_2\big) -
\pl_2 \big(K \ast g_1 \, \pl_1 K \ast g_2\big)
}
which proves \eqref{in0-3333} and \rf{in0-2222}.
\end{proof}
\begin{proof}[Proof of Proposition \ref{pro-123-1111}] 
We start by noticing that since the solution $r^\eps:=(r^\eps_1,r^\eps_2,r^\eps_3)$
to \rf{sCR} is $\C^\al(\Tor)$-valued,
it is automatically $\C^\infty(\Tor)$-valued. Indeed, $(\xi^\eps_1,\xi^\eps_2,\xi^\eps_3)$
is in $\C^\infty(\Tor)$. On the other hand, by Lemmata \ref{fh2222} and \ref{Gconv-1111},
a convolution with $G$ increases the regularity of the expression in parenthesis,
on the right-hand side of \rf{sCR}, by 1, 
inferring that $r^\eps$ is in $\C^{1+\al}(\Tor)$. By induction, we conclude that 
$r^\eps$  is in $\C^\infty(\Tor)$.

%Define $c= \sg e^{i\te}$, $\te \ne n \pi$, $n\in \Nnu$, 
In what follows, with a slight abuse of notation, 
we omit the index $\eps$, that is, we write $\w$, $\ww$ and $\www$
instead of $\w^\eps$, $\ww^\eps$ and $\www^\eps$. The same applies to
other objects in \rf{sCR}.

Define $a$ and $b$ by the formulas
\aaa{ 
\lb{ab-1111}
2 a c =   i \gm_2 \eta_2, \qquad 
2 b c =  - i \gm_1\eta_1,
}
where 
 $\eta_1$ and $\eta_2$
%in Lemma \ref{noise-f-2222}  
are independent standard real Gaussian random variables
as in Definition \ref{white-noise}.
From now on, we choose and fix $\omega \in \Omega$ so that $\eta_k=\eta_k(\omega )$ and $\xi_k=\xi_k(\omega)$ are real numbers. 
%\adda{Note that these are gaussian random variables.}
Let us introduce $\rho_k$, $k=1,2,3$, as follows:
 %the following random  variables,
\aaa{
\lb{r123-1111}
r_1 = \ro_1  + a, \quad  r_2 =  \ro_2 + b, \quad  
r_3 = \ro_3 + c.
}
%We will search for a solution to the system
Since $(r_1,r_2,r_3)\in\C^\al(\Tor)$,  we infer that 
$(\ro_1,\ro_2,\ro_3)\in\C^\al(\Tor)$ and 
 $(\ro_1, \ro_2, \ro_3)$ is a   solution to the following system 
\dela{\eqref{sys_r11117}, where $a$ and $b$ are given by \rf{ab-1111}:}
\eq{
\lb{sys_r11117}
\ro_1= - 4i\, G\ast \Im 
\big[(\ro_2 + b)(\ovl{\ro_3} +c)\big] - \gm_1 \xi_1, \\
\ro_2= 4i\, G\ast \Im 
\big[(\ro_1 + a)(\ovl{\ro_3} +c)\big] - \gm_2  \xi_2,   \\
\ro_3 = - 4i \, G \ast \Im\big[ (\ro_1 + a)(\ovl{\ro_2} - b)\big] - \gm_3 \xi_3.
%\big[(\ovl{\ro_1} - a)(\ro_2 + b) - (\ro_1 + a)(\ovl{\ro_2} - b)\big] 
}
Then, it follows from \eqref{eqn-xi^eps}  and  \eqref{sys_r11117} that 
we can find    functions $g_j\in \C^\al(\Tor,\mathbb{R})$ satisfying the assumptions of Lemma \ref{in0-1111} and 
\aa{
\ro_j = i G \ast g_j, \quad 
j=1,2,3.
}

Hence, by Lemma \ref{in0-1111},
for all $k,j = 1,2,3$, $k\ne j$,
\aa{
\int \Im \ro_k \ovl{\ro_j} = 0.
}
In addition, we notice that $\int_\Tor  \ro_j  = 0$
for $j=1,2,3$.
By Lemma \ref{Lav1111},
%(\rho_1,\rho_2,\rho_3)\in\C^{1+\al}(\Tor)$ and
\eqn{
\lb{ii-1111}
\pl_{\bar z} \ro_1 & = 2i\, 
\Im [(\ro_2+b)(\ovl{\ro_3} + c)] +\gm_1 i\,(\w - \eta_1)  -  2  b c\\
& = 2i\, \Im [(\ro_2+b)(\ovl{\ro_3} + c)]
+\gm_1 i\, \w,\\
-\pl_{\bar z} \ro_2 & =  
2i\, \Im [(\ro_1+a)(\ovl{\ro_3} + c)] 
-\gm_2 i\, (\ww - \eta_2) - 2 a c \\
& = 2i\, \Im [(\ro_1+a)(\ovl{\ro_3} + c)] 
 -\gm_2 i\, \ww,
\\
\pl_{\bar z} \ro_3 & =  2i \,
\Im (\ro_1 + a) (\ovl{\ro_2} - b)  %- 2i \, \Im a \ovl b
+ \gm_3 i\, \www .
}
By the change of variable \rf{r123-1111}, 
$(r_1,r_2,r_3)$ solves \rf{sCR}.
\end{proof}
\begin{corollary}
\lb{abc0}
If $\w$ and $\ww$ have zero means over $\Tor$,
then the statement of Proposition \ref{pro-123-1111} 
holds with $a=b=c=0$.
\end{corollary}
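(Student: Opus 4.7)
The plan is to revisit the proof of Proposition \ref{pro-123-1111} and observe that under the extra zero-mean hypothesis on $\w$ and $\ww$, the algebraic conditions \eqref{ab-1111} that forced nonzero shifts $a,b$ are trivialized, permitting the choice $a=b=c=0$.

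First, I would invoke Remark \ref{zero-mean-1111}: the hypothesis that $\w$ and $\ww$ have zero mean over $\Tor$ means exactly that $\eta_1 = \eta_2 = 0$ in the Fourier representation \eqref{FS-2222}. Consequently, the defining relations $2ac = i\gamma_2 \eta_2$ and $2bc = -i\gamma_1 \eta_1$ from \eqref{ab-1111} reduce to $2ac = 2bc = 0$, which are satisfied by $a=b=0$ for \emph{any} $c$; in particular, we may take $c=0$.

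Next, I would retrace the proof with the substitution $a=b=c=0$. The change of variables \eqref{r123-1111} becomes the identity, so $\rho_j = r_j$ and the system \eqref{sys_r11117} coincides with \eqref{sys_r1111}. Each $r_j$ can then be written as $r_j = iG\ast g_j$ for a real-valued $g_j \in \C^\alpha(\Tor,\mathbb{R})$, exactly as in the proposition's proof. Lemma \ref{in0-1111} gives $\int_\Tor \Im(r_k\overline{r_j}) = 0$ for all $k\neq j$, hence the bilinear right-hand sides of \eqref{sys_r1111} are mean-zero; combined with the standing zero-mean assumption on $\www$ and the added zero-mean assumption on $\w,\ww$, each of the three right-hand sides (nonlinear plus noise) of \eqref{sys_r1111} is a mean-zero distribution. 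Lemma \ref{Lav1111} then allows us to apply $\pl_{\bar z}$ to the mild equation and directly obtain \eqref{sCR}, with the terms $\gamma_k i\v_k^\eps$ emerging without any $\eta_k$ correction. Smoothness of $r^\eps$ follows from the same bootstrap argument as in the proof of the proposition (using Lemmata \ref{fh2222} and \ref{Gconv-1111}), unchanged by the choice of constants.

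The main point to verify is that none of the extra terms that appeared in the original computation \eqref{ii-1111} (namely $\gamma_1 i\eta_1$, $\gamma_2 i\eta_2$, and the constants $-2bc$, $-2ac$, together with cross products such as $\Im[b\overline{\rho_3}]$ and $\Im[bc]$) produce any obstruction; but this is immediate, since every such term carries at least one of the factors $a$, $b$, $c$, $\eta_1$, $\eta_2$, all of which vanish under the corollary's hypotheses. Thus no residual constant needs to be absorbed, and the proof of Proposition \ref{pro-123-1111} transfers verbatim with $a=b=c=0$.
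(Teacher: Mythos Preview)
Your proof is correct and follows the same approach as the paper's: both rest on the observation that $\eta_1=\eta_2=0$ (via Remark \ref{zero-mean-1111}) trivializes the relations \eqref{ab-1111}, so one can set $a=b=c=0$ and the computation \eqref{ii-1111} collapses directly to \eqref{sCR}. The paper's proof states this in two sentences, whereas you unpack the retracing in more detail, but the logic is identical.
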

\begin{proof}
Recall that the fact that $\w$ and $\ww$ have zero means over $\Tor$
means that $\eta_1=\eta_2 = 0$. Therefore, 
system \rf{ii-1111} (and therefore system \rf{sCR}) is satisfied with $a=b=c=0$.
\end{proof}

\subsection{The fixed-point argument}
\begin{lemma}
\lb{lem-R-3333}
Throughout this lemma, we  choose and fix \(\gm = (\gm_1, \gm_2, \gm_3)\).
For \(\eps > 0\), let
\aaa{
\lb{Rxi2222}
R^\eps: = r^\eps + \gm \xi^\eps,
}
where $\gm \xi^\eps: = (\gm_1 \xi^\eps_1,\gm_2 \xi^\eps_2, \gm_3 \xi^\eps_3)$.
Then, system  \rf{sys_r1111} is equivalent to
\aaa{
\lb{R-eps-1111}
R^\eps = -2 G \ast \big(R^\eps \x \ovl {R^\eps} 
 - (\gm  \xi^\eps )\x \ovl R^\eps   -  R^\eps \x (\gm \ovl {\xi^\eps} ) \big)
 - \td \gm \zeta^\eps +\vect{a}{b}{c},
}
where 
\aaa{\lb{eqn-zeta}
\zeta^\eps : = 2G \ast (\xi^\eps \x \ovl {\xi^\eps} ),
 \qquad 
 %\text{and} \quad
\td \gm :  = (\gm_2 \gm_3, \gm_1\gm_3, \gm_1\gm_2)
}
and by $\td \gm\zeta^\eps$, we mean 
$(\td \gm_1\zeta^\eps_1, \td \gm_2 \zeta^\eps_2, \td \gm_3\zeta^\eps_3)$.
\end{lemma}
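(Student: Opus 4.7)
The plan is to verify the claim via a direct algebraic manipulation, after first recasting system \rf{sys_r1111} in compact vector form using the cross-product structure hidden in the three scalar equations.

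First, I would observe that by the definition of the cross product in $\Cnu^3$,
\aa{
(r^\eps \x \ovl{r^\eps})_1 = r^\eps_2 \ovl{r^\eps_3} - r^\eps_3 \ovl{r^\eps_2} = -(\ovl{r^\eps_2} r^\eps_3 - r^\eps_2 \ovl{r^\eps_3}),
}
and analogously for the other two components. Hence the three scalar equations in \rf{sys_r1111} are precisely the components of the single vector equation
\aa{
r^\eps = -2 G \ast (r^\eps \x \ovl{r^\eps}) - \gm \xi^\eps + (a,b,c)^T.
}

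Next, I substitute the change of variable $r^\eps = R^\eps - \gm \xi^\eps$ from \rf{Rxi2222}. The additive term $-\gm \xi^\eps$ on the right cancels with the corresponding contribution brought over from the left, leaving
\aa{
R^\eps = -2 G \ast \big((R^\eps - \gm \xi^\eps) \x (\ovl{R^\eps} - \gm\,\ovl{\xi^\eps})\big) + (a,b,c)^T,
}
where I used $\ovl{\gm \xi^\eps} = \gm\,\ovl{\xi^\eps}$, since $\gm \in \Rnu^3$. Expanding the cross product bilinearly yields
\aa{
(R^\eps - \gm \xi^\eps) \x (\ovl{R^\eps} - \gm\,\ovl{\xi^\eps})
= R^\eps \x \ovl{R^\eps} - (\gm\xi^\eps) \x \ovl{R^\eps} - R^\eps \x (\gm\,\ovl{\xi^\eps}) + (\gm \xi^\eps) \x (\gm\,\ovl{\xi^\eps}).
}

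Finally, I would check componentwise that the last term satisfies
\aa{
\big((\gm \xi^\eps) \x (\gm\,\ovl{\xi^\eps})\big)_k = \td\gm_k\, (\xi^\eps \x \ovl{\xi^\eps})_k, \qquad k=1,2,3,
}
with $\td\gm$ as in \rf{eqn-zeta}; for instance, for $k=1$, the left-hand side equals $\gm_2\gm_3(\xi^\eps_2\ovl{\xi^\eps_3} - \xi^\eps_3\ovl{\xi^\eps_2}) = \td\gm_1 (\xi^\eps \x \ovl{\xi^\eps})_1$. Since convolution with $G$ is linear, $2G \ast ((\gm \xi^\eps) \x (\gm\,\ovl{\xi^\eps})) = \td\gm\,\zeta^\eps$ componentwise, and collecting the terms gives exactly \rf{R-eps-1111}. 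All steps are reversible, so the two systems are equivalent. There is no real obstacle here beyond careful bookkeeping of signs and componentwise multiplication by $\gm$; the lemma is essentially a clean reformulation that isolates $\zeta^\eps$ as the only problematic object inherited from the singular data.
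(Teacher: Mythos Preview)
Your proof is correct and follows essentially the same approach as the paper: rewrite \rf{sys_r1111} in vector form as $r^\eps = -2G\ast(r^\eps\times\ovl{r^\eps}) - \gm\xi^\eps + (a,b,c)^T$, substitute $r^\eps = R^\eps - \gm\xi^\eps$, and expand the bilinear cross product. The paper's proof is much terser (it even omits the $-\gm\xi^\eps$ term in the displayed vector equation, which you correctly include and cancel), while you spell out the componentwise verification of $(\gm\xi^\eps)\times(\gm\,\ovl{\xi^\eps}) = \td\gm\,(\xi^\eps\times\ovl{\xi^\eps})$; but there is no substantive difference.
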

\begin{remark}
Note that so far $\zeta^\eps$ makes sense only for $\eps>0$ because the product $\xi^\eps \x \ovl {\xi^\eps}$ is ill-defined for $\eps=0$.
\end{remark}
\begin{proof}[Proof of Lemma \ref{lem-R-3333}]
One just needs to notice that \rf{sys_r1111}
is equivalent to 
\aa{
r^\eps = -2G \ast \big(r^\eps \x \ovl{r^\eps}\big) +\vect{a}{b}{c}.
}
The substituting $r^\eps = R^\eps - \gm \xi^\eps$, we obtain 
\rf{R-eps-1111}. The proof is complete.
\end{proof}
For every $\nu\in \Rnu$, introduce the Banach space 
\aa{
E_{\nu} = \big\{R = (R_1,R_2,R_3):\Tor\to \Cnu^3: \;\; R_k \in \Zs^\nu(\Tor), k=1,2,3\big\}
}
with the norm $\|\cdot\|_\nu$ defined by \rf{YHZ-1111}. 
For every $\sg>0$, we set
\aa{
c= \frac\sg4 \quad \text{and define $a,b$ via $c$ by formulas
\rf{ab-1111}.}
}
Given $\eps > 0$, $\sigma>0$, and $\kp\in (0,\frac12)$, define  %fixed-point 
a map $\Gm_{\eps,\sg}: E_{1-\kp}\to E_{1-\kp}$,
\aaa{
\lb{map-Gm}
\Gm_{\eps,\sg} R: =
-2G \ast \Bigl(R\x \ovl R+ (\gm  \xi^\eps)\x \ovl R + R\x (\gm \ovl {\xi^\eps})\Bigr) 
 - \td \gm \zeta^\eps + \vect{a}{b}{c}.
}
As we will see from Lemmata \ref{lem8888}
and \ref{zeta-lemma-1111}, $\xi^\eps$ and $\zeta^\eps$ are well defined
also for $\eps=0$. Therefore, \rf{map-Gm} also defines a map
$\Gm_{0,\sg}$ for every $\sg>0$.
Our aim is to  show that for every $\dl>0$,
there exists a set $\Om_\dl\sub\Om$, $\PP(\Om_\dl)>1-\dl$, 
and
$\sg>0$ such that for every $\eps \gt 0$ and $|\gm| \ls \sg^2$, for every $\om\in\Om_\dl$, 
the map $\Gm_{\eps,\sg}: E_{1-\kp}\to E_{1-\kp}$ is well defined and has a unique fixed point.
%for every $\om\in\Om_\dl$.
 
For this aim, as well as to define the map $\Gm_{0,\sg}$, we need the following 
 auxiliary Lemmata \ref{lem8888} and \ref{zeta-lemma-1111}.
 \begin{remark}
 \lb{remark-3.5}
Note that  by Lemma \ref{smooth}, $\C^\infty(\Tor) = \medcap_\nu \Zs^\nu(\Tor)$.
Therefore,
\aa{
\fv^\eps, \xi^\eps, \zeta^\eps \in E_\nu \quad \text{for all} \;\;
\nu \in \Rnu \;\; \text{and} \;\; \eps>0.
}
\end{remark}
\begin{lemma}%{lem}
\lb{lem8888}
For all $\kp\in (0,1)$,  $\xi \in E_{-\kp}$.
Moreover, for all $\eps\gt 0$,
\aa{
\|\xi^\eps\|_{-\kp} \ls \|\fv\|_{-1-\kp},
}
where $\fv$ on the right-hand side is the non-mollified 3D white noise 
appearing in \rf{CR},
$\xi^\eps$ is defined by \eqref{eqn-xi^eps}, $\xi^0: = \xi$,
and the constant in the above inequality
does not depend on $\eps\gt 0$.
\end{lemma}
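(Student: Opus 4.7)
The plan is to exploit two structural facts in combination: first, that convolution with the Green function $G$ of $-2\pl_{\bar z}$ gains exactly one degree of regularity (the content of Lemma \ref{Gconv-1111} on the little H\"older--Zygmund scale), and second, that the 2D white noise $\fv$ almost surely belongs to $\Zs^{-1-\kp}(\Tor)$ for every $\kp>0$ (Lemma \ref{wn1111}). Componentwise, $\xi^\eps_k = 2i G\ast \v_k^\eps$ and the claim then reduces to a clean two-step norm estimate.

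The first step is a uniform-in-$\eps$ mollification bound of the form
\[
\|\v_k^\eps\|_{-1-\kp} \ls \|\v_k\|_{-1-\kp}, \qquad \eps>0,
\]
which one derives directly from Definition \ref{HZ1111}: for any test function $\ffi\in\ms B^r$ and any scale $\la\in[0,1]_\D$, the duality identity $(\rho_\eps\ast \v_k)(\ffi^\la_x) = \v_k(\check\rho_\eps \ast \ffi^\la_x)$ together with the observation that $\check\rho_\eps \ast \ffi^\la_x$ is, up to constants depending only on $\rho$, of the same scaled test-function class at scale $\max(\la,\eps)\ls 1$, yields the required inequality. The second step applies Lemma \ref{Gconv-1111} to conclude
\[
\|\xi^\eps_k\|_{-\kp} = \|2i\, G\ast \v_k^\eps\|_{-\kp} \ls \|\v_k^\eps\|_{-1-\kp} \ls \|\v_k\|_{-1-\kp}.
\]
Summing over $k=1,2,3$ using the Euclidean norm \rf{YHZ-1111} produces $\|\xi^\eps\|_{-\kp}\ls \|\fv\|_{-1-\kp}$.

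For $\eps=0$, one defines $\xi := 2iG\ast \fv$ directly as a distributional convolution, and applies Lemma \ref{Gconv-1111} once more to deduce $\xi\in E_{-\kp}$ with $\|\xi\|_{-\kp}\ls \|\fv\|_{-1-\kp}$. Equivalently, and consistently with Remark \ref{remark-3.5}, one can define $\xi$ as the $E_{-\kp}$-limit of $\xi^\eps$: since $\mathscr W^\eps\to \mathscr W$ in $\Zs^{-1-\kp}(\Tor)$ (by the density of $\C^\infty$ in the little H\"older--Zygmund space together with the uniform bound from the first step applied to differences), the regularizing property of $G\ast$ transports this convergence to $\xi^\eps \to \xi$ in $E_{-\kp}$, and the bound passes to the limit.

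The only subtle point, and the one I would write out most carefully, is the uniform mollification estimate on the negative H\"older--Zygmund scale. The difficulty is that convolving a compactly supported scaled test function $\ffi^\la_x$ with $\check\rho_\eps$ changes its profile and effective scale, so one must check that the resulting function still lies (up to a multiplicative constant independent of $\eps$ and $\la$) in the appropriate class $\ms B^r$ centered at some point $y\in\Tor$ at scale comparable to $\la$ when $\eps\ls \la$, and at scale comparable to $\eps$ when $\eps\gt \la$. Handling the case $\eps\gt \la$ is where one must be slightly careful, but since $\la\in[0,1]_\D$ and $\eps$ is fixed for the estimate, this poses no real obstruction and is a standard argument in the H\"older--Zygmund setting.
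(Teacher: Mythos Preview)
Your proposal is correct and follows essentially the same route as the paper: white-noise regularity from Lemma~\ref{wn1111}, the one-step regularity gain of $G\ast$ from Lemma~\ref{Gconv-1111}, and a uniform-in-$\eps$ mollification bound on the negative H\"older--Zygmund scale.

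The one noteworthy difference is how you handle the mollification bound. You sketch a scale-comparison argument (treating $\check\rho_\eps\ast\ffi^\la_x$ as a test function at effective scale $\max(\la,\eps)$ and splitting into cases $\eps\lesssim\la$ versus $\eps\gt\la$). The paper instead packages this as Lemma~\ref{lem22-22}, whose proof is shorter and avoids any scale analysis: one writes $\eta^\eps(\ffi^\la_x)=\int \eta(\ffi^\la_{x-y})\rho_\eps(y)\,dy$ as an average over \emph{translates} of the same rescaled test function, and periodicity immediately gives $\sup_x|\eta^\eps(\ffi^\la_x)|\le\sup_x|\eta(\ffi^\la_x)|$, hence $|\eta^\eps|_{-\kp}\le|\eta|_{-\kp}$ with constant~$1$. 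Your approach would also work, but the translation argument is cleaner and sidesteps the ``subtle point'' you flag.
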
%{lem}
\begin{proof} %[Proof of Lemma \ref{lem8888}]
%Let $\kp\in (0,1)$.
By Lemma \ref{wn1111}, $\v_k$, $k=1,2,3$, are
in $\Zs^{-1-\kp}(\Tor)$. Furthermore, by Lemma \ref{Gconv-1111},
$\xi_k$, $k=1,2,3$, are in $\Zs^{-\kp}(\Tor)$.
Since $\xi^\eps_{kj} = G_k \ast \v^\eps_j$, $k = 1,2$; $j=1,2,3$, one has
\aa{
|\xi^\eps_{kj}|_{-\kp} \ls |\v^\eps_j|_{-1-\kp} \ls |\v_j|_{-1-\kp}. 
}
In particular, we have used Lemma \ref{lem22-22}.
\end{proof}

\begin{lemma}
\lb{zeta-lemma-1111} 
Let, for $\eps > 0$,  $\zeta^\eps$ be defined by \eqref{eqn-zeta}
and $\kp\in (0,\frac12)$. Then 
%$\zeta^\eps \in E_{1-\kp}$ and 
$\zeta^\eps$ 
has a limit $\zeta$ in $E_{1-\kp}$ as $\eps \to 0$. Furthermore,
% there exists a constant $K>0$, which depends only on $\kp$
% and, in particular, does not depend on $\eps$,
% such that 
for all $\eps\gt 0$ and $\kp\in (0,\frac12)$,
\aaa{
\lb{bound-zeta}
\|\zeta^\eps\|_{1-\kp} \ls \|\fv\|_{-1-\frac{\kp}4}^2, 
}
where $\zeta^0:=\zeta$, and the constant in the above inequality
does not depend on $\eps\gt 0$.
\end{lemma}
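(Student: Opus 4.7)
The plan is to exploit the structural identity proved in Lemma \ref{in0-1111} together with the ``subtraction trick'' described in the introduction to write $\zeta^\eps$ in a form amenable to Young-product estimates, from which both the uniform bound \rf{bound-zeta} and the convergence $\zeta^\eps \to \zeta$ will follow.

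First, I would derive an explicit formula for each component of $\xi^\eps \x \ovl{\xi^\eps}$. Since $\xi^\eps_k = 2i G \ast \v^\eps_k$ with the smoothed noise $\v^\eps_k$ real-valued, a direct application of Lemma \ref{in0-1111} (up to scalar factors) yields, for the first component,
\[
(\xi^\eps \x \ovl{\xi^\eps})_1 = \xi^\eps_2 \ovl{\xi^\eps_3} - \ovl{\xi^\eps_2} \xi^\eps_3 = 4\pl_1\bigl[(K \ast \ww^\eps)(\pl_2 K \ast \www^\eps)\bigr] - 4\pl_2\bigl[(K\ast\ww^\eps)(\pl_1 K \ast \www^\eps)\bigr],
\]
and analogously for the second and third components. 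Convolving this identity with $G$ and integrating by parts transfers the outer derivatives onto $G$, producing a sum of terms of the form $(\pl_j G) \ast \bigl[(K\ast\ww^\eps)(\pl_k K \ast \www^\eps)\bigr]$. At this stage all objects are smooth since $\eps>0$, so these manipulations are rigorous.

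Next, to upgrade the naive regularity $0^-$ to $1-\kp$, I would insert the subtraction $K\ast \ww^\eps(y) \rightsquigarrow K\ast\ww^\eps(y) - (K\ast\ww^\eps)(x)$ inside each bracketed product. This is legitimate because the added constant is annihilated by the outer derivative $\pl_j$; after undoing the integration by parts, $\zeta^\eps_1(x)$ is expressed as $(\pl_1 G) \ast \bigl[(K\ast\ww^\eps - (K\ast\ww^\eps)(x))\,\Re \xi^\eps_3\bigr](x)$ minus an analogous $\pl_2$ term. The bracketed Young product now vanishes at $y=x$ and decays like $|y-x|^{1-\kp}$ near the diagonal, thanks to $K \ast \ww^\eps \in \Zs^{1-\kp}(\Tor)$ (uniformly in $\eps$, by Lemmata \ref{wn1111} and \ref{fh2222}). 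This local cancellation compensates the near-diagonal singularity of the kernel $\pl_j G$ and yields the extra degree of regularity. Testing $\zeta^\eps_1$ against a rescaled bump $\ffi^\la_x \in \ms B^0$ and combining the Young-product estimate of Lemma \ref{young} with the uniform bounds $|K\ast\ww^\eps|_{1-\kp} \ls |\ww|_{-1-\kp}$ and $|\xi^\eps_3|_{-\kp} \ls |\www|_{-1-\kp}$ (from Lemma \ref{lem8888} and Lemma \ref{Gconv-1111}) should then deliver \rf{bound-zeta}; an identical argument applies to $\zeta^\eps_2$ and $\zeta^\eps_3$.

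Finally, to obtain the limit, I would apply the very same representation to the difference $\zeta^\eps - \zeta^{\eps'}$, splitting it into a telescoping sum in which each summand contains exactly one factor of the form $K\ast(\ww^\eps - \ww^{\eps'})$, $\xi^\eps - \xi^{\eps'}$, or $\www^\eps - \www^{\eps'}$. Standard mollifier bounds of the type $|\v^\eps - \v^{\eps'}|_{-1-\kp} \ls (\eps \vee \eps')^\beta\, |\v|_{-1-\kp-\beta}$ for small $\beta>0$, plugged into the same Young-product estimate, show that $\{\zeta^\eps\}$ is Cauchy in $E_{1-\kp}$, so $\zeta := \lim_{\eps \to 0}\zeta^\eps$ exists and inherits the uniform bound. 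The main obstacle is executing this regularity upgrade cleanly: one must verify that the subtraction trick, the Young multiplication in $\Zs^{1-\kp}\cdot \Zs^{-\kp}$, and the singular kernel $\pl_j G$ genuinely combine to deliver the full gain of one order, all uniformly in $\eps$. Once that near-diagonal bookkeeping is done, everything else is an application of the Young and H\"older-Zygmund machinery already assembled in the paper.
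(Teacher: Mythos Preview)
Your plan contains the essential ingredients---the structural identity of Lemma~\ref{in0-1111} and the subtraction trick---but the execution through the kernel $\pl_j G$ has a genuine gap. The paper's organization is different: it first proves that $\tet^\eps:=\xi^\eps\times\overline{\xi^\eps}$ itself lies in $E_{-\kp}$ with the uniform bound~\eqref{tet-j-1111} (Proposition~\ref{prop-2222}), and only then concludes $\zeta^\eps=2G\ast\tet^\eps\in E_{1-\kp}$ from the Schauder estimate of Lemma~\ref{Gconv-1111} (Corollary~\ref{cor-2222}). The crucial tool for the first step is not the Young-product bound of Lemma~\ref{young} but the sharper \emph{germ estimate} isolated as Proposition~\ref{lem4444-evelina}: $|[(u-u(x))\eta](\ffi^\la_x)|\ls\la^{\nu-\kp}[u]_\nu|\eta|_{-\kp}$. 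After writing $\tet^\eps_1=2\pl_1 F^\eps_x-2\pl_2 H^\eps_x$ via the subtraction (Lemma~\ref{lc1111}), testing against $\ffi^\la_x$ transfers the derivatives onto the \emph{test function}, which stays localized at $x$ and of scale $\la$; Proposition~\ref{lem4444-evelina} then gives $|\tet^\eps_1(\ffi^\la_x)|\ls\la^{-\kp}$ directly.

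Your route instead moves the derivatives onto $G$ and pairs the germ $F^\eps_x$ with $\pl_j G$ convolved against $\ffi^\la_x$, which is no longer supported near $x$, so the cancellation at $y=x$ cannot be used as stated. Your pointwise heuristic ``$F^\eps_x$ decays like $|y-x|^{1-\kp}$'' fails uniformly in $\eps$ because the factor $\xi^\eps_{23}(y)$ is not bounded in $L^\infty$ as $\eps\to 0$; one really needs the distributional bound of Proposition~\ref{lem4444-evelina}, and Lemma~\ref{young} alone only gives $|F^\eps_x|_{-\kp}\ls|K\ast\ww^\eps|_{1-\kp}|\xi^\eps_{23}|_{-\kp}$, which puts $\zeta^\eps$ in $E_{-\kp}$, not $E_{1-\kp}$. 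To salvage the $\pl_j G$ approach you would have to carry out a dyadic decomposition of $\pl_j G$ and apply a germ bound at each scale---effectively redoing the Schauder estimate---which the paper's two-step organization avoids entirely. A small side remark: the subtracted constant is not annihilated by a single outer $\pl_j$; each term $\pl_j[C\,\xi^\eps_{k3}]$ is nonzero, and it is the cross-cancellation $\pl_1\xi^\eps_{23}-\pl_2\xi^\eps_{13}=2(\pl^2_{12}-\pl^2_{21})K\ast\www^\eps=0$ that makes the subtraction legitimate (Lemma~\ref{in0-4444}).
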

Lemma \ref{zeta-lemma-1111} is a fundamental result
of this work, underpinning the proof
of Theorem \ref{thm-main-1111}.
Its proof is postponed to Subsection \ref{zeta},
where we give the explicit expression for $\zeta$ and  prove
\rf{bound-zeta}.
\begin{lemma}\label{lem-3.6}
If  $\eps\gt 0$, $\sg>0$, and $\kp\in (0,\frac12)$,
then the map $\Gm_{\eps,\sg}: E_{1-\kp}\to E_{1-\kp}$, given by \rf{map-Gm},
 is well-defined.
\end{lemma}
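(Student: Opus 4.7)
The plan is to verify that each of the four summands on the right-hand side of \rf{map-Gm} belongs to $E_{1-\kp}$ whenever $R\in E_{1-\kp}$. Two of them are immediate. The constant vector $(a,b,c)^T$ lies in $E_\nu$ for every $\nu\in\Rnu$ since it is smooth, and the term $\td\gm\zeta^\eps$ is in $E_{1-\kp}$ by Lemma \ref{zeta-lemma-1111}, which covers both $\eps>0$ and $\eps=0$ (with $\zeta^0:=\zeta$). So only the cubic and the two mixed contributions require a genuine estimate, and in both cases I would first bound the integrand in an appropriate H\"older-Zygmund space and then use Lemma \ref{Gconv-1111} to gain one derivative through convolution with $G$.

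For the quadratic term $-2G\ast(R\x\ovl R)$, observe that each scalar component of $R\x\ovl R$ has the form $R_j\ovl{R_k}-\ovl{R_j}R_k$ with $R_j,R_k\in\Zs^{1-\kp}(\Tor)$. Since $1-\kp\in(\tfrac12,1)$ is strictly positive and non-integer, $\Zs^{1-\kp}(\Tor)$ is a multiplicative algebra (equivalently, the Young product reduces here to the ordinary pointwise product), so $R\x\ovl R\in E_{1-\kp}$. Convolving with $G$ raises the regularity by $1$ via Lemma \ref{Gconv-1111}, yielding an element of $E_{2-\kp}$ that embeds continuously into $E_{1-\kp}$.

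For the two mixed terms $-2G\ast((\gm\xi^\eps)\x\ovl R)$ and $-2G\ast(R\x(\gm\ovl{\xi^\eps}))$, Lemma \ref{lem8888} gives $\xi^\eps\in E_{-\kp}$ uniformly in $\eps\gs 0$ (with $\xi^0:=\xi$). Each scalar component is a bilinear combination of factors $\xi^\eps_j$ of regularity $-\kp$ and factors $R_k$ of regularity $1-\kp$. The sum of the regularities $(-\kp)+(1-\kp)=1-2\kp$ is strictly positive precisely because $\kp\in(0,\tfrac12)$, so the Young product is well defined in $\Zs^{-\kp}(\Tor)$ and \emph{uniquely} defined thanks to Lemma \ref{young} applied in the scale of little H\"older-Zygmund spaces. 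A final application of Lemma \ref{Gconv-1111} lifts the result from $\Zs^{-\kp}$ to $\Zs^{1-\kp}$, placing it in $E_{1-\kp}$.

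The main (mild) obstacle is the third step: because $\xi^\eps$ has negative regularity, ordinary multiplication is not available, and the restriction $\kp<\tfrac12$ is exactly what keeps the sum of regularities positive so that the Young product of $\xi^\eps$ with elements of $E_{1-\kp}$ makes sense. The choice to work in the little H\"older-Zygmund scale $\Zs^\al$, rather than $\ms Z^\al$, is what secures uniqueness of this Young product; both features are already hard-wired into the definition of the ambient space $E_{1-\kp}$.
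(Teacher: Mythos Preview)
Your proof is correct and follows essentially the same route as the paper: handle $\td\gm\zeta^\eps$ via Lemma~\ref{zeta-lemma-1111}, use the algebra property of $\Zs^{1-\kp}$ for $R\times\ovl R$, invoke the Young product (Lemma~\ref{young}) together with Lemma~\ref{lem8888} for the mixed terms, and then lift by one with Lemma~\ref{Gconv-1111}. The only cosmetic difference is that the paper first places $R\times\ovl R$ into $E_{-\kp}$ (via $E_{1-\kp}\subset E_{-\kp}$) and then convolves, whereas you convolve first into $E_{2-\kp}$ and then embed; both are fine.
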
%{lem}
% \begin{remark}
% \lb{3.9}
% By $\xi^0$ and $\zeta^0$, we understand the limits of
% $\xi^\eps$ and $\zeta^\eps$ in $E_{-\kp}$ and $E_{1-\kp}$,
% respectively.
% \end{remark}
\begin{proof}[Proof of Lemma \ref{lem-3.6}]
Let us choose and fix $R\in E_{1-\kp}$. In view of Lemmata \ref{zeta-lemma-1111} and  \ref{Gconv-1111}, it is sufficient to show that 
$R\x \ovl R+ (\gm  \xi^\eps)\x \ovl R + R\x (\gm \ovl {\xi^\eps}) \in E_{-\kp}$.

Since, $\overline{R}\in E_{1-\kp}$ and   the space $E_{1-\kp}$ is an algebra,  we infer that $R \times \overline{R}\in E_{1-\kp} \subset E_{-\kp}$. 
Moreover, by Lemma \ref{young} and since ${\xi^\eps} \in E_{-\kp}$, we infer that both 
$(\gm  \xi^\eps)\x \ovl R$ and   $R\x (\gm \ovl {\xi^\eps})$ belong to $  E_{-\kp}$. The proof is complete. 
\dela{
Then, since 
\aa{
\|R\x \ovl R\|_{1-\kp} \ls \|R\|_{1-\kp}^2.
}
we  have
\aaa{
\lb{G-est-1111}
\|\Gm_{\eps,\sg} R\|_{1-\kp} 
\ls \|R\|^2_{1-\kp} +  |\gm|\|\xi^\eps\|_{-\kp} \|R\|_{1-\kp} 
 + \td \gm\|\zeta^\eps\|_{1-\kp} + |a|+ |b| + c.
% \ls \|R\|^2_{1-\kp} + \|\xi^\eps\|_{-\kp}^2  + 
% \|\xi^\eps\|_{-\kp} + \|R\|_{1-\kp} + \|\zeta^\eps\|_{1-\kp}.
}
The first term on the right-hand side appears due to Lemma \ref{kp1111}. 
%Indeed, \aaa{\lb{Rkp-1111} \|R\|_{-\kp}\ls \|R\|_{1-\kp},}
 
The constant in the estimate \rf{G-est-1111}
depends on the kernel $G$, on the constant
in the inequality \rf{Y-torus-1111}  for Young products,
and on the constant in \rf{Rkp-1111}. Specifically,
it depends on $\kp$ through these constants.}
\end{proof}
Let us consider the closed ball in $E_{1-\kp}$ of radius $\sg>0$ (to be fixed later)
\aa{
M_{1-\kp, \sg}: = \{R\in E_{1-\kp}: \, \|R\|_{1-\kp} \lt \sg\}.
} Obviously, $M_{1-\kp, \sg}$ is  a complete metric space.

\begin{proposition}%{pro}
\lb{pro2222}
For every $\dl\in (0,1)$ and $\kp\in (0,\frac12)$, there exists a set
 $\Om_\dl\sub\Om$, $\PP(\Om_\dl)>1-\dl$, and
a number $\sg>0$ such that  
for all $\om\in\Om_\dl$,  $\eps \gt 0$, 
and $\gm=(\gm_1,\gm_2,\gm_3)$ %$(\gm_j>0,$ $j=1,2,3)$
 satisfying $|\gm|\ls\sg^2$,  $\Gm_{\eps,\sg}$   maps $M_{1-\kp, \sg}$ into itself and the 
 corresponding map % restriction and \red{co-restriction ??} 
$\Gm_{\eps,\sg}: M_{1-\kp, \sg}\to M_{1-\kp, \sg}$ is  a strict contraction 
and hence has a unique fixed point in $M_{1-\kp, \sg}$.
Moreover, the contraction constant of $\Gm_{\eps,\sg}$ does not depend 
on $\eps \gt 0$ and $\om\in\Om_\dl$.
\end{proposition}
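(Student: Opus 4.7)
The plan is a standard Banach contraction argument on the complete metric space $M_{1-\kp,\sg}$, carried out uniformly in $\eps\geq 0$ and in $\om$ ranging over a ``good'' event $\Om_\dl$. To construct $\Om_\dl$, I would intersect the event $\{\|\fv\|_{-1-\kp}\leq M_0\}$, whose probability can be made larger than $1-\dl/2$ by taking $M_0$ large (since $\fv\in\Zs^{-1-\kp}$ almost surely by Lemma \ref{wn1111}), with $\{|\eta_1|\leq\eta_0,\ |\eta_2|\leq\eta_0\}$, whose probability tends to $1$ as $\eta_0\to\infty$ by Gaussian tail estimates. With this choice, Lemmata \ref{lem8888} and \ref{zeta-lemma-1111} yield the bounds $\|\xi^\eps\|_{-\kp}\leq C_0 M_0$ and $\|\zeta^\eps\|_{1-\kp}\leq C_0 M_0^2$ uniformly in $\eps\geq 0$ and $\om\in\Om_\dl$.

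To derive the two estimates needed for the contraction mapping principle, I would apply to the three nonlinear terms in \rf{map-Gm} the regularity-gain of one order for $G\ast$ (Lemma \ref{Gconv-1111}), the algebra property of $\Zs^{1-\kp}(\Tor)$, and the Young product estimate (Lemma \ref{young}). For $R,R'\in M_{1-\kp,\sg}$ this gives, on $\Om_\dl$,
\[
\|\Gm_{\eps,\sg}R\|_{1-\kp}\ls \|R\|_{1-\kp}^2+|\gm|M_0\|R\|_{1-\kp}+|\gm|^2 M_0^2+|a|+|b|+c
\]
and
\[
\|\Gm_{\eps,\sg}R-\Gm_{\eps,\sg}R'\|_{1-\kp}\ls \bigl(\|R\|_{1-\kp}+\|R'\|_{1-\kp}+|\gm|M_0\bigr)\|R-R'\|_{1-\kp}.
\]
Inserting $\|R\|_{1-\kp},\|R'\|_{1-\kp}\leq\sg$, the identities $c=\sg/4$ and $|a|,|b|\leq 2|\gm|\eta_0/\sg$ from \rf{ab-1111}, and the hypothesis $|\gm|\leq c_\ast\sg^2$ with an implicit constant $c_\ast>0$, both right-hand sides become polynomials in $\sg$ whose leading behavior as $\sg\to 0$ is linear, with coefficients that depend only on $M_0$, $\eta_0$, the embedding of constants into $\Zs^{1-\kp}$, $c_\ast$, and the implicit constants from the lemmata above.

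To close the fixed-point argument, I would choose $c_\ast$ small enough (depending only on $M_0$, $\eta_0$, and the aforementioned embedding and product constants) so that the linear-in-$\sg$ coefficient of the self-mapping bound is strictly less than $1$, and then pick $\sg$ itself small enough to absorb the quadratic and higher-order terms and to push the Lipschitz constant below some $\th<1$. Since all involved constants are independent of $\eps\geq 0$ and of $\om\in\Om_\dl$, Banach's fixed-point theorem furnishes a unique fixed point in $M_{1-\kp,\sg}$ with the uniformity claimed in the statement.

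The main obstacle is the self-mapping property: the additive term $|a|+|b|+c$ is itself of size $\sg$, driven by $c=\sg/4$ together with the scaling $|a|,|b|\sim|\gm|/\sg$ from \rf{ab-1111}, so absorbing it into the ball of radius $\sg$ requires a careful quantitative choice of $c_\ast$ in $|\gm|\ls\sg^2$, depending on the embedding constants and on $\eta_0$. All other terms are $O(\sg^2)$ and are harmless. The uniformity in $\eps$, which is what will ultimately allow the fixed points $R^\eps$ of $\Gm_{\eps,\sg}$ to be compared as $\eps\to 0$, rests on the cornerstone Lemma \ref{zeta-lemma-1111}.
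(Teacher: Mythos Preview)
Your proposal is correct and follows essentially the same approach as the paper: construct $\Om_\dl$ by bounding $\|\fv\|_{-1-\kp}$ and $|\eta_1|,|\eta_2|$, derive the self-mapping and Lipschitz estimates from Lemmata \ref{Gconv-1111}, \ref{young}, \ref{lem8888}, \ref{zeta-lemma-1111}, and then close the fixed-point argument by first fixing the implicit constant in $|\gm|\ls\sg^2$ (the paper takes $|\gm|<\sg^2/(16\La)$, yielding $|a|+|b|+c<\sg/2$) and then choosing $\sg$ small via a polynomial condition equivalent to your ``absorb the quadratic and higher-order terms''. Your identification of the additive term $|a|+|b|+c$ as the delicate point matches the paper's treatment exactly.
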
%{pro}
\begin{remark}
\rm
We reinforce that
Proposition \ref{pro2222} is also valid for the map 
$\Gm_{0,\sg}$; $\xi^0$ and $\zeta^0$ are understood as in
Lemmata \ref{lem8888} and \ref{zeta-lemma-1111}.
\end{remark}
\begin{proof}[Proof of Proposition \ref{pro2222}]
Let $\dl\in (0,1)$ be fixed arbitrarily. Since
the Gaussian random variables $\eta_1$ and $\eta_2$  are finite a.s., by Lemma \ref{pro1111}  we
 can find a $\La\in (0,+\infty)$ such that
$\PP(\Om_\dl) > 1-\dl$, where
\aaa{
\lb{omdl1111}
\Om_\dl: = \Big\{\om: \|\fv\|_{-1-\kp} + |\eta_1| + |\eta_2| 
< \La\Big\}.
}
Note that  by Lemmata \ref{lem8888}, \ref{zeta-lemma-1111} and \ref{lem22-22},
$\|\xi^\eps\|_{-\kp}$ and $\|\zeta^\eps\|_{1-\kp}$ are bounded
on $\Om_\dl$ uniformly in $\eps$ by a constant depending only on $\La$.
Take $R\in M_{1-\kp, \sg}$. Recall that in the definition of  
the map $\Gm_{\eps,\sg}$ by \rf{map-Gm}, we took $c=\frac\sg4$.
Choose $\gm = (\gm_1,\gm_2,\gm_3)$ such that $|\gm|< \frac{\sg^2}{16\La}$.
Then, $|a|+|b|+c<\frac\sg2$.
By Lemmata \ref{lem8888}, \ref{zeta-lemma-1111}, and \ref{young},
\mm{
\|\Gm_{\eps,\sg} R\|_{1-\kp} 
\lt C\big(\sg^2+|\gm| \sg \|\xi^\eps\|_{-\kp} 
+ |\gm|^2\|\zeta^\eps\|_{1-\kp}\big) +\frac\sg2
\lt C_\La (\sg^2  + \sg^3 + \sg^4) +\frac\sg2 \lt \sg,
}
where $C$ is a constant that depends only on $\kp$ and $G$, 
and $C_\La$ is another constant that 
depends only on $C$ and $\La$. 
The latter inequality is achieved by choosing $\sg$ such that 
\aaa{
\lb{fp-1111}
C_\La  (\sg  + \sg^2 + \sg^3) <\frac12.
}
Therefore, $\Gm_{\eps,\sg}$ maps $M_{1-\kp, \sg}$ to $M_{1-\kp, \sg}$.
Further, take two points $R,\hat R\in M_{1-\kp,\sg}$.
We obtain
\aaa{
\lb{ineq1111}
\|\Gm_{\eps,\sg} R - \Gm_{\eps,\sg} \hat R\|_{1-\kp} \lt
2C( \sg + |\gm|\|\xi^\eps\|_{-\kp}) \|R-\hat R\|_{1-\kp}
\lt 2C_\La  (\sg + \sg^2)  \|R-\hat R\|_{1-\kp}.
 }
 Observe that by \rf{fp-1111}, $2C_\La  (\sg + \sg^2)<1$. 
 %where the constant $C$ and $C_\La$ are the same as in \rf{G-est-1111}.
 Therefore, $\Gm_{\eps,\sg}$ has a unique fixed point in $M_{1-\kp, \sg}$.
 We also remark that the fixed-point constant does not depend on 
 $\eps\gt 0$ and $\om\in \Om_\dl$.
\end{proof}

 \begin{remark}
 \rm
 We remark that the proof holds also for $\eps=0$. In this case,
 $\xi^0$ and $\zeta^0$ are the limits of $\xi^\eps$ and $\zeta^\eps$  in $E_{-\kp}$ 
 and $E_{1-\kp}$, respecively.
 \end{remark}

\section{Renormalization and regularity of $\zeta^\eps$.}
\lb{zeta}
Recall that $\zeta^\eps$, defined for $\eps>0$ as
\aa{
\zeta^\eps: = 2G \ast (\xi^\eps \x \ovl {\xi^\eps} ),
}
is a noise term in the fixed-point equation \rf{lem-R-3333}. We also recall
that the proof of Lemma \ref{zeta-lemma-1111} was postponed to this section.
According to Lemma \ref{zeta-lemma-1111}, $\zeta^\eps$ has a limit $\zeta$ in $E_{1-\kp}$. This is a non-trivial and key result of this work
since, as it will be clear from Lemma \ref{lc1111},
%\ref{te-rn-1111},
the explicit expression for $\xi^\eps \x \ovl {\xi^\eps}$ suggests 
that $\zeta\in E_{-\kp}$.
The latter property, however, is not sufficient for the fixed-point argument to work. 
The first step in proving Lemma \ref{zeta-lemma-1111} is Lemma \ref{lc1111} in which, in particular, we prove that the $ \tet:= \lim_{\eps\to 0} \xi^\eps \x \ovl {\xi^\eps}$ exists 
in  $\mc D'(\Tor)$. The second step is Proposition \ref{prop-2222},
where we prove that
$\tet\in \cap_{\kp\in (0,\frac12)}E_{-\kp}$. Corollary \ref{cor-2222}, which is a direct of consequence of 
Proposition \ref{prop-2222}, implies Lemma \ref{zeta-lemma-1111}.

\begin{lemma}
\lb{in0-4444}
Let $A\in\Rnu$ be a constant. Then, 
under the assumptions of Lemma \ref{in0-1111}, 
in $\C^{\al+1}(\Tor,\mathbb{R})$, we have the identity 
\aa{
f_1 \ovl{f_2} - \ovl{f_1} f_2 = 
\pl_1 \big( (K \ast g_1 - A) \, \pl_2 K \ast g_2\big) -
\pl_2 \big( (K \ast g_1 - A) \, \pl_1 K \ast g_2\big).
}
\end{lemma}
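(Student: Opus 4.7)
The plan is to obtain Lemma \ref{in0-4444} as an immediate corollary of Lemma \ref{in0-1111} by subtracting a vanishing ``correction'' from the identity already proved there.

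First, I would invoke Lemma \ref{in0-1111} directly, which gives, in $\C^{\al+1}(\Tor,\mathbb{R})$,
\[
f_1 \ovl{f_2} - \ovl{f_1} f_2 =
\pl_1 \big( K \ast g_1 \, \pl_2 K \ast g_2\big) - \pl_2 \big(K \ast g_1 \, \pl_1 K \ast g_2\big).
\]
The target identity differs from the one above by subtracting, inside each outer derivative, the term $A \, \pl_j K \ast g_2$ (where $j=2$ and $j=1$ respectively). Since $A$ is a real constant, the difference between the two right-hand sides equals
\[
A\,\pl_1(\pl_2 K \ast g_2) - A\,\pl_2(\pl_1 K \ast g_2) = A\bigl(\pl_1 \pl_2 - \pl_2 \pl_1\bigr)(K \ast g_2).
\]

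The second step is to justify that this commutator vanishes. Since $g_2 \in \C^\al(\Tor,\mathbb{R})$ with $\al\in(0,1)$, standard Schauder regularity for the Green function $K$ of the Laplacian yields $K \ast g_2 \in \C^{2+\al}(\Tor,\mathbb{R})$, so the mixed partials exist as continuous functions and coincide by Schwarz's theorem. Hence the commutator is zero, and adding the zero quantity $-\pl_1(A \,\pl_2 K\ast g_2) + \pl_2(A\,\pl_1 K\ast g_2)$ to the identity from Lemma \ref{in0-1111} produces exactly the desired formula.

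There is no real obstacle here — the lemma is just a reformulation of Lemma \ref{in0-1111} that will later be useful for renormalization, since the freedom to subtract a constant $A = (K\ast g_1)(x)$ inside the derivatives allows one to pass to the pointwise-centered form mentioned in the introduction (cf.\ the representation of $\tet_1$ displayed there). The only thing one has to be careful about is that the operations make sense in the claimed regularity class, which is guaranteed by the Schauder estimate for $K\ast g_1$ and $K\ast g_2$ under the hypothesis $g_k \in \C^\al(\Tor,\mathbb{R})$.
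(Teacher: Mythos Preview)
Your proof is correct and essentially the same as the paper's. The only cosmetic difference is that the paper redoes the product-rule computation of Lemma \ref{in0-1111} from scratch with $K\ast g_1$ replaced by $K\ast g_1 - A$ (using that derivatives of the constant $A$ vanish and that the mixed-partial terms cancel), whereas you invoke Lemma \ref{in0-1111} as a black box and then observe that the difference between the two right-hand sides is $A(\pl_1\pl_2-\pl_2\pl_1)(K\ast g_2)=0$ by Schwarz's theorem; both arguments rely on the same regularity fact $K\ast g_2\in\C^{2+\al}$ already recorded in the proof of Lemma \ref{in0-1111}.
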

\begin{proof}
Using the definition of $G$ and $f_k$, $k=1,2$, we obtain
\mm{
f_1 \ovl{f_2} - \ovl{f_1} f_2 
= \big(\pl_1 (K \ast g_1 - A)\big) (\pl_2 K \ast g_2) - 
\big(\pl_2 (K \ast g_1-A)\big) (\pl_1 K \ast g_2) 
\\
= \pl_1 \big( (K \ast g_1 - A) \, \pl_2 K \ast g_2\big) - (K \ast g_1) \pl^2_{12} K \ast g_2
- \pl_2 \big((K \ast g_1 -A) \, \pl_1 K \ast g_2\big) + (K \ast g_1)  \pl^2_{12} K \ast g_2\\
= \pl_1 \big( (K \ast g_1 - A) \, \pl_2 K \ast g_2\big) -
\pl_2 \big( (K \ast g_1 - A) \, \pl_1 K \ast g_2\big).
}
\end{proof}
% The following result provides an explicit expression for $\xi^\eps \x \ovl{\xi^\eps}$
% since a priori, the components of the aforementioned vector product are expressed as
% ill-defined product of noises.

\begin{lemma}
\lb{lc1111}
The components of $\tet^\eps: = \xi^\eps \x \ovl{\xi^\eps}$  
satisfy the following relations:
\eqn{
\lb{lcxxxx-0} 
\tet_1^\eps = 2 \big(\pl_1[(K\ast \ww^\eps) \xi^\eps_{23}] -
 \pl_2 [(K\ast \ww^\eps)\xi^\eps_{13}]\big),\\
\tet_2^\eps = 2 \big(\pl_1[(K\ast \www^\eps)\xi^\eps_{21}]
- \pl_2 [(K\ast \www^\eps)\xi^\eps_{11}]\big),\\
\tet_3^\eps = 2 \big(\pl_1[(K\ast \w^\eps)\xi^\eps_{22}]
 - \pl_2 [(K\ast \w^\eps)\xi^\eps_{12}]\big),
}
where $\xi^\eps_{jk}$ have been introduced in \eqref{eqn-xi^eps}.
Furthermore, $\tet^\eps$ satisfy, for any $x\in \Tor$, 
the additional relations 
\eqn{
\lb{lcxxxx} 
&\tet_1^\eps = 2 \big(\pl_1[(K\ast \ww^\eps  - K \ast \ww^\eps(x))\xi^\eps_{23}] 
- \pl_2 [(K\ast \ww^\eps - K \ast \ww^\eps(x))\xi^\eps_{13}]\big),\\
&\tet_2^\eps = 2 \big(\pl_1[(K\ast \www^\eps - K \ast \www^\eps(x))\xi^\eps_{21}] 
- \pl_2 [(K\ast \www^\eps - K \ast \www^\eps(x)) \xi^\eps_{11}]\big),\\
&\tet_3^\eps = 2 \big(\pl_1[(K\ast \w^\eps  - K \ast \w^\eps(x))\xi^\eps_{22}]
- \pl_2 [(K\ast \w^\eps - K \ast \w^\eps(x))\xi^\eps_{12}]\big),
}
where both \rf{lcxxxx-0} and \rf{lcxxxx}  are valid in $\C^\infty(\Tor)$.
Moreover, there exists a limit 
\begin{equation}
    \tet: = \lim_{\eps\to 0} \tet^\eps
\mbox{ in } \mc D'(\Tor),
\end{equation}
and the  components $\tet_k$ of $\tet$, $k=1,2,3$, are given by
 \eqn{
 \lb{tet-d-1111}
\tet_1 = 2 \big(\pl_1[(K\ast \ww) \xi_{23}] -
 \pl_2 [(K\ast \ww) \xi_{13}]\big),\\
\tet_2 = 2 \big(\pl_1[(K\ast \www)\xi_{21}] 
- \pl_2 [(K\ast \www) \xi_{11}]\big),\\
\tet_3 = 2 \big(\pl_1[(K\ast \w) \xi_{22}]
 - \pl_2 [(K\ast \w)\xi_{12}]\big),
}
and also by
\eqn{
\lb{lcxxxx-lim}
&\tet_1 = 2 \big(\pl_1[(K\ast \ww  - K \ast \ww(x))\xi_{23}] 
- \pl_2 [(K\ast \ww - K \ast \ww(x))\xi_{13}]\big),\\
&\tet_2 = 2 \big(\pl_1[(K\ast \www - K \ast \www(x))\xi_{21}] 
- \pl_2 [(K\ast \www - K \ast \www(x)) \xi_{11}]\big),\\
&\tet_3 = 2 \big(\pl_1[(K\ast \w  - K \ast \w(x))\xi_{22}]
- \pl_2 [(K\ast \w - K \ast \w(x))\xi_{12}]\big),
}
where identities \rf{lcxxxx-lim} are valid for any $x\in \Tor$, 
and the products in the square brackets, in both \rf{tet-d-1111} and \rf {lcxxxx-lim},
are understood as Young products (in the sense of Lemma \ref{young}).
\end{lemma}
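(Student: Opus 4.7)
The plan is to derive the identities \rf{lcxxxx-0}--\rf{lcxxxx} at the level of the smooth approximation $\tet^\eps$ by a direct application of Lemmata \ref{in0-1111} and \ref{in0-4444}, and then to pass to the limit $\eps\to 0$ by invoking the continuity of the Young product on H\"older--Zygmund spaces. Writing $\xi^\eps_k = iG \ast g^\eps_k$ with $g^\eps_k := 2\v^\eps_k \in \C^\infty(\Tor,\mathbb{R})$ places us exactly in the setting of Lemma \ref{in0-1111}. The first component of the cross product is $\tet^\eps_1 = \xi^\eps_2 \ovl{\xi^\eps_3} - \ovl{\xi^\eps_2}\xi^\eps_3$, and Lemma \ref{in0-1111} applied with $(g_1,g_2) = (2\ww^\eps, 2\www^\eps)$ immediately yields $\tet^\eps_1 = 4\pl_1((K\ast\ww^\eps)(\pl_2 K \ast \www^\eps)) - 4\pl_2((K\ast\ww^\eps)(\pl_1 K \ast \www^\eps))$. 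Recalling $\xi^\eps_{23} = 2\pl_2 K \ast \www^\eps$ and $\xi^\eps_{13} = 2\pl_1 K \ast \www^\eps$, this is precisely the first line of \rf{lcxxxx-0}; the other two lines follow by the analogous application with the indices permuted, matching $\tet^\eps_2 = \xi^\eps_3 \ovl{\xi^\eps_1} - \ovl{\xi^\eps_3}\xi^\eps_1$ and $\tet^\eps_3 = \xi^\eps_1 \ovl{\xi^\eps_2} - \ovl{\xi^\eps_1}\xi^\eps_2$. The variant \rf{lcxxxx} is then immediate from Lemma \ref{in0-4444}, applied with the constant $A = 2(K \ast \ww^\eps)(x)$ (and analogously for the other two components) for an arbitrary $x\in \Tor$.

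For the passage to the limit, I would verify that each factor appearing on the right-hand sides of \rf{lcxxxx-0}--\rf{lcxxxx} converges in a space compatible with the Young product. By Lemma \ref{wn1111}, $\v_k \in \Zs^{-1-\kp}(\Tor)$, and a standard mollifier estimate (Lemma \ref{lem22-22}) gives $\v^\eps_k \to \v_k$ in $\Zs^{-1-\kp}(\Tor)$. Convolution with the Laplacian Green function $K$ gains two units of regularity, so $K \ast \v^\eps_k \to K \ast \v_k$ in $\Zs^{1-\kp}(\Tor)$; since $1-\kp > 0$ these limits are continuous, so that the pointwise evaluations $(K \ast \v_k)(x)$ appearing in \rf{lcxxxx-lim} are well defined. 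Convolution with $G_j = \pl_j K$ gains only one unit of regularity, so $\xi^\eps_{jk} \to \xi_{jk}$ in $\Zs^{-\kp}(\Tor)$. Since $\kp \in (0,\tfrac12)$, the sum $(1-\kp) + (-\kp) = 1-2\kp$ is strictly positive, and hence by Lemma \ref{young} the Young product defines a jointly continuous bilinear map $\Zs^{1-\kp} \x \Zs^{-\kp} \to \Zs^{-\kp}$; subtracting the bounded constant $(K \ast \v_k)(x)$ in \rf{lcxxxx} preserves the regularity. Combining with the continuity of $\pl_1$ and $\pl_2$ on $\mc D'(\Tor)$, I conclude that $\tet^\eps$ converges in $\mc D'(\Tor)$ to the expressions \rf{tet-d-1111} and \rf{lcxxxx-lim}, which unambiguously defines $\tet$.

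The main obstacle is essentially conceptual: it lies in the observation that the formally divergent cross-term $(K \ast g_1)\,\pl^2_{12} K \ast g_2$ appearing in the naive expansion of $\xi^\eps_j \ovl{\xi^\eps_k} - \ovl{\xi^\eps_j}\xi^\eps_k$ cancels \emph{exactly} in the proof of Lemma \ref{in0-1111}, with no $\eps$-dependent renormalization constant required. This is what allows $\tet^\eps$ to converge already at the level of the unrenormalized products in \rf{lcxxxx-0} and \rf{lcxxxx}, once the limiting products are reinterpreted as Young products. Modulo this, the only genuine technical work is the bookkeeping of the various factors of $2$ between $g^\eps_k = 2\v^\eps_k$, the decomposition $\xi^\eps_k = \xi^\eps_{2k} + i\xi^\eps_{1k}$ with $\xi^\eps_{jk} = 2 G_j \ast \v^\eps_k$, and the prefactor $2$ appearing in front of the brackets in \rf{lcxxxx-0}--\rf{lcxxxx}.
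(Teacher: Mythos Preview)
Your proposal is correct and follows essentially the same approach as the paper: apply Lemma \ref{in0-4444} (with $A=0$ and $A=K\ast\v^\eps_k(x)$) to obtain \rf{lcxxxx-0}--\rf{lcxxxx}, then pass to the limit using the continuity of the Young product on $\Zs^{1-\kp}\times\Zs^{-\kp}$. One minor citation slip: the convergence $\v^\eps_k\to\v_k$ in $\Zs^{-1-\kp}$ is given by Lemma \ref{wn1111} itself (via the rate estimate \rf{est1111}), not by Lemma \ref{lem22-22}, which only gives uniform boundedness.
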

\begin{proof}
We have
\aa{
\tet_1^\eps = \xi^\eps_2 \ovl{\xi^\eps_3} - \xi^\eps_3 \ovl{\xi^\eps_2}, 
\qquad \xi^\eps_k = i G \ast (2\v_k^{\eps}), \;\; k=2,3.
}
% By Lemma \ref{in0-1111},
% \aa{
% \xi^\eps_2 \ovl{\xi^\eps_3} - \xi^\eps_3 \ovl{\xi^\eps_2}
% = 4\pl_1(K\ast \ww^\eps \,\pl_2 K \ast \www^\eps)
% - 4\pl_2 (K\ast \ww^\eps \,\pl_1 K \ast \www^\eps).
% }
% The proof of  the existence of the limit $\tet$
% and of \rf{tet-d-1111} is postponed to Lemma \ref{lc1111}.
By Lemma \ref{in0-4444}, for every $A\in\Rnu$,
\aa{
\xi^\eps_2 \ovl{\xi^\eps_3} - \xi^\eps_3 \ovl{\xi^\eps_2}
= 4\pl_1[(K\ast \ww^\eps - A)\pl_2 K \ast \www^\eps]
- 4\pl_2 [(K\ast \ww^\eps -A) \pl_1 K \ast \www^\eps].
}
Recalling that $2\pl_2 K \ast \www^\eps = \xi^\eps_{23}$,
$2\pl_1 K \ast \www^\eps  = \xi^\eps_{13}$, and setting $A=0$,
we obtain the expression for $\tet_1^\eps$ in \rf{lcxxxx-0}.
The identities for $\tet_2^\eps$ and $\tet_3^\eps$ in \rf{lcxxxx-0}
can be obtained likewise.

Further, by choosing and fixing $x \in \Tor$,  and then
setting  $A=K\ast \ww^\eps(x)$, we deduce 
the first identity in \rf{lcxxxx}. 
The last two identities in \rf{lcxxxx}
can be obtained likewise.

 Finally, for any test function $\ffi$,
 \mm{
 \lim_{\eps\to 0} 
 \tet^\eps_1(\ffi)
 = \lim_{\eps \to 0} \int \big(K \ast \ww^\eps - A\big) \xi^\eps_{23}\, \pl_1 \ffi
 + \lim_{\eps \to 0} \int \big(K \ast \ww^\eps - A\big) \xi^\eps_{13} \,\pl_2 \ffi\\
 =  \int \big(K \ast \ww - A\big) \xi_{23}\, \pl_1 \ffi
 + \int \big(K \ast \ww - A\big) \xi_{13}\, \pl_2 \ffi.
 }
 Setting $A=0$, we obtain the first expression in \rf{tet-d-1111}.
 Next, setting  $A=K\ast \ww^\eps(x)$, where 
 $x\in\Tor$ is arbitrary, we obtain the first expression 
 in \rf{lcxxxx-lim}. We also remark that by Lemma \ref{pro1111},
 $(K \ast \ww - A)\in \Zs^{1-\kp}$ 
 and $\xi_{13},\xi_{23}\in \Zs^{-\kp}$, $\kp\in (0,\frac12)$.
 The last two expressions in 
 \rf{tet-d-1111} and \rf{lcxxxx-lim} can be obtained likewise.
\end{proof}
\begin{remark}
\rm
Identities \rf{lcxxxx} and \rf{lcxxxx-lim} are important
to compute the regularity of $\tet$. Namely, using 
these identities, we will show that $\tet\in E_{-\kp}$ and 
$\tet^\eps\to \tet$ in $E_{-\kp}$.
\end{remark}
% because of $2^{1-2\kp}$, $\kp\in (0,\frac12)$
%$\frac\kp2 \in (0,\frac14)$.
% $1- \frac\kp2 \in (\kp,1)$, $1- \frac\kp2 > \kp$, $\frac32 \kp < 1$,
%$\kp< \frac23$, $\kp\in (0,\frac23)$ which is bigger than the previous interval
\begin{proposition}
\lb{lem4444-evelina}
Let  $\td{\ms B}^1$ be the class
of  test functions introduced in Definition \ref{def-h-1111}.
Assume that $\kp\in (0,\frac12)$, $\nu\in (2\kp,1)$, 
$\eta\in \Zs^{-\kp}(\Tr^d)$ and $u \in \Zs^\nu(\Tr^d)$. Then, 
\begin{align}
\lb{eqn-new-01}
\sup_{\substack{x\in \Tr^d, \la\in (0,1], \\ \ffi,\psi\in \td{\ms B}^1}}
\frac{\big|\big[(u  - u(x))\eta\big] \big((\ffi\psi)^\la_x\big)\big|}{\la^{\nu -3\kp}}
\lt [u]_\nu |\eta|_{-\kp} \ls |u|_\nu |\eta|_{-\kp},   
\end{align}
where 
\[
[u]_\nu:= \sup_{\{x\in \Tr^d, |y|\lt 1\}} \frac{|u(x+y)-u(x)|}{|y|^\nu}
\]
and  the product $(u-u(x))\eta$ is understood as the Young product
in the sense of Lemma \ref{young}.
\end{proposition}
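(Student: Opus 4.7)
The plan is to prove the bound via a mollification argument that exploits the vanishing of $u - u(x)$ at $y = x$. Let $u^\la := u \ast \rho_\la$ denote the mollification of $u$ at scale $\la$, using a fixed smooth mollifier $\rho_\la$ of width $\la$. Standard mollification estimates yield $\|u - u^\la\|_{L^\infty} \ls [u]_\nu \la^\nu$ and $\|\nabla u^\la\|_{L^\infty} \ls [u]_\nu \la^{\nu-1}$. I would start from the decomposition
\begin{align*}
[(u - u(x))\eta](\ffi^\la_x)
&= \eta\bigl((u^\la - u^\la(x))\ffi^\la_x\bigr)
 + (u^\la(x) - u(x))\,\eta(\ffi^\la_x) \\
&\quad + [(u - u^\la)\eta](\ffi^\la_x),
\end{align*}
where the first two terms involve only the smooth mollification $u^\la$ (so the products there are pointwise), while the third is a genuine Young product of a rough function with $\eta$.

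The second term on the right is immediate: $|u^\la(x) - u(x)| \ls [u]_\nu \la^\nu$ and $|\eta(\ffi^\la_x)| \ls |\eta|_{-\kp} \la^{-\kp}$ by the definition of the $\Zs^{-\kp}$ norm. For the first term, I would rewrite $(u^\la - u^\la(x))\ffi^\la_x = [u]_\nu \la^\nu \cdot \tilde\ffi^\la_x$ with the rescaled profile
\[
\tilde\ffi(z) := \frac{u^\la(x + \la z) - u^\la(x)}{[u]_\nu \la^\nu}\, \ffi(z),
\]
and verify by direct computation, using the bounds on $u^\la$ and $\nabla u^\la$, that both $\|\tilde\ffi\|_\infty$ and $\|\nabla \tilde\ffi\|_\infty$ are bounded by an absolute constant. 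So $\tilde\ffi$ lies in a constant multiple of $\ms B^1$, and the defining property of the $\Zs^{-\kp}$ norm gives the estimate $\ls [u]_\nu \la^\nu \cdot |\eta|_{-\kp} \la^{-\kp}$, as required.

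The main obstacle is the third term $[(u - u^\la)\eta](\ffi^\la_x)$. Here $u - u^\la$ has small sup norm but is not smooth, so the rescaling argument above does not apply directly. The key feature is that the frequency content of $u - u^\la$ is concentrated above $\la^{-1}$. I would split $\eta = \eta^\la + (\eta - \eta^\la)$ and treat $(u - u^\la)\eta^\la$ as an ordinary pointwise product, bounded by $\|u - u^\la\|_\infty$ together with $\|\eta^\la\|_\infty \ls |\eta|_{-\kp} \la^{-\kp}$ (both against the $L^1$ mass of $\ffi^\la_x$); for the remainder $(u - u^\la)(\eta - \eta^\la)$ I would invoke the Young product of Lemma \ref{young} together with the sup-norm smallness of $u - u^\la$ to extract the additional $\la^\nu$ factor from the high-frequency localization. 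Combining the three contributions yields the bound $\ls [u]_\nu |\eta|_{-\kp} \la^{\nu-\kp}$, and the final inequality $[u]_\nu \ls |u|_\nu$ is immediate since the Hölder seminorm $[u]_\nu$ is controlled by the full $\Zs^\nu$ norm of $u$.
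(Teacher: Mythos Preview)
There are two genuine gaps. First, in Term~1 you assert that $\tilde\ffi$ lies in a constant multiple of $\td{\ms B}^1$, but the supremum in the statement is over $\ffi\in\td{\ms B}^0$, which only requires $\|\ffi\|_\infty\lt 1$ and imposes no control on $\nabla\ffi$. Differentiating $\tilde\ffi$ in $z$ produces the term
\[
\frac{u^\la(x+\la z)-u^\la(x)}{[u]_\nu\la^\nu}\,\nabla\ffi(z),
\]
whose prefactor is of order one, so this contribution is of size $\|\nabla\ffi\|_\infty$ and hence unbounded over $\td{\ms B}^0$. The same objection hits Term~2, where you use $|\eta(\ffi^\la_x)|\ls|\eta|_{-\kp}\la^{-\kp}$ as if $\ffi\in\td{\ms B}^1$. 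Second, the sketch for the high--high piece $(u-u^\la)(\eta-\eta^\la)$ in Term~3 does not close: the Young estimate $|fg|_{-\kp'}\ls|f|_\al|g|_{-\kp'}$ needs $\al>\kp'>0$, and one only has $|u-u^\la|_\al\ls[u]_\nu\la^{\nu-\al}$, while the high-frequency gain $|\eta-\eta^\la|_{-\kp'}\ls\la^{\kp'-\kp}|\eta|_{-\kp}$ is exactly cancelled by the cost $\la^{-\kp'}$ of testing at scale $\la$. The net exponent is $\nu-\al-\kp$, short of the required $\nu-\kp$ by $\al>0$; the sup-norm smallness of $u-u^\la$ is precisely what the Young bound does not see.

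The paper's argument is a one-step absorption with no remainder. After reducing to smooth $u$ by density in $\Zs^\nu$ (Lemma~\ref{hna-1111}), so that the product is classical, it writes
\[
\big[(u-u(x))\eta\big](\ffi^\la_x)=\la^\nu\,\eta\big((\psi\ffi)^\la_x\big),
\qquad \psi(z):=\la^{-\nu}\big(u(x+\la z)-u(x)\big),
\]
and uses only $\|\psi\|_\infty\lt[u]_\nu$, so that $\psi\ffi/[u]_\nu\in\td{\ms B}^0$. By staying in $\td{\ms B}^0$ throughout, the argument never needs a derivative of $\psi$ or of $\ffi$; consequently no mollification of $u$ at the testing scale $\la$, and no analogue of your Term~3, ever appears.
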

%\begin{remark}
%\rm
% By classical argument, 
% it follows that \rf{eqn-new-01} is fulfilled
% for every $\eta\in \Zs^{-\kp}(\Tr^d)$ 
% and every $u$ belonging to the closure of 
% $\Zs^\kap(\Tr^d)$ in $\Zs^\nu(\Tr^d)$.
%\end{remark}
\begin{remark}
We borrowed the type of expression $(u  - u(x))\eta$
from \cite{bcz2023-2}. It has a connection with 
the idea of germs, see \cite{bcz2023-1,bcz2023-2}. 
\end{remark}

\begin{proof}[Proof of Proposition \ref{lem4444-evelina}] 
Take two test functions $\ffi,\psi \in \td {\ms B^1}$.
 Let
\aa{
g(x,\la,y) = \frac{u(y)  - u(x)}{\la^{\nu-2\kp}}.
}
Since $u \in \Zs^\nu(\Tr^d)$ and $\nu>2\kp$,   $|u|_{2\kp} \lt |u|_\nu$. Therefore,
\mm{
\big|[g(x,\la, \fdot) \eta]\big((\ffi\psi)^\la_x\big)\big| =
\Big|\Big[\ffi\Big(\frac{\fdot-x}{\la}\Big)g(x,\la, \fdot) \eta\Big](\psi^\la_x)\Big| {\la^\kp} \la^{-\kp}\\
\lt   \sup_{\hat x\in\Tr^d, \, \hat \la\in (0,1], \hat \psi\in \td {\ms B^1}}
\Big|\Big[\ffi\Big(\frac{\fdot-x}{\la}\Big)g(x,\la, \fdot) \eta\Big]({\hat \psi}^{\hat \la}_{\hat x})\Big| 
{\hat \la^\kp} \la^{-\kp}\\
\ls \Big|\ffi\Big(\frac{\fdot-x}{\la}\Big)g(x,\la, \fdot)\Big|_{2\kp}|\eta|_{-\kp} \,\la^{-\kp},
}
where the constant on the right-hand side depends on $\kp$ and $d$.
Since $\ffi\big(\frac{\fdot-x}{\la}\big)$ is supported on $B_\la(x)$,
\aa{
\Big\|\ffi\Big(\frac{\fdot-x}{\la}\Big)g(x,\la, \fdot)\Big\|_\infty \lt \|\ffi\|_\infty [u]_{\nu-2\kp}.
}
Let $y,z\in \ovl{B_\la(x)}$, so that $|y-z|\lt 2\la$. We have
\eqn{
\lb{compute-1111}
&\Big|\ffi\Big(\frac{y-x}{\la}\Big)\frac{u(y)  - u(x)}{\la^{\nu-2\kp}} 
- \ffi\Big(\frac{z-x}{\la}\Big)\frac{u(z)  - u(x)}{\la^{\nu-2\kp}}\Big| \, |y-z|^{-2\kp}\\
& \lt \Big|\ffi\Big(\frac{y-x}{\la}\Big)\Big| \, \frac{|u(y) - u(z)|}{\la^{\nu-2\kp} |y-z|^{2\kp}}
+ \frac{|u(z)  - u(x)|}{\la^{\nu-2\kp}}
\frac{\big|\ffi\big(\frac{y-x}{\la}\big) - \ffi\big(\frac{z-x}{\la}\big)\big|}{|y-z|^{2\kp}}\\
& \lt \|\ffi\|_\infty [u]_{\nu} \, \frac{|y-z|^{\nu}}{\la^{\nu-2\kp} |y-z|^{2\kp}} + 
 \|\nab\ffi\|_\infty \frac{|u(z)  - u(x)|}{\la^{\nu-2\kp}} \frac{|y-z|^{1-2\kp}}{\la}\\
 & \lt 2^{\nu-2\kp} \, [u]_{\nu} + \sqrt{d}\, 2^{1-2\kp} \, \frac{\la^{1-2\kp}}{\la} \frac{|u(z)  - u(x)|}{\la^{\nu-2\kp}}
  \lt [u]_{\nu} (2^{\nu-2\kp} + \sqrt{d}\, 2^{1-2\kp}).
}
Hence,
\aa{
\Big|\ffi\Big(\frac{\fdot-x}{\la}\Big)g(x,\la, \fdot)\Big|_{2\kp} \ls [u]_\nu \ls |u|_\nu.
}
Now let $y \in \ovl{B_\la(x)}$ and $z\notin \ovl{B_\la(x)}$. Let $z': = [y,z] \cap \pl B_\la(x)$.
We have $|y-z| > |y-z'|$, and hence $|y-z|^{-2\kp} < |y-z'|^{-2\kp}$.
On the other hand, 
\aa{
\ffi\Big(\frac{z-x}{\la}\Big) \frac{u(z)  - u(x)}{\la^{\nu-2\kp}} 
= \ffi\Big(\frac{z'-x}{\la}\Big) \frac{u(z')  - u(x)}{\la^{\nu-2\kp}}= 0.
}
Therefore, the first line in \rf{compute-1111} can be evaluated
from above by the identical line with $z'$ in place of $z$. 
Since $z' \in \ovl{B_\la(x)}$, the estimate \rf{compute-1111} follows.
\end{proof}
\begin{corollary}%{lemma}%{lem}
    \lb{cor4444}
Assume that $\eta\in \Zs^{-\kp}(\Tor)$, $\kp\in (0,\frac12)$,
and let $\eta^\eps = \eta \ast \rho_\eps$.
Then, for all $\eps\gt 0$ and $j=1,2,3$,
\aaa{
\lb{kw1111}
\sup_{\substack{x\in \Tr^d, \la\in (0,1], \\ \ffi,\psi\in \td{\ms B}^1}}
\frac{\big|\big[(K \ast \v_j^\eps  - K \ast \v_j^\eps(x))\eta^\eps\big] 
\big((\ffi\psi)^\la_x\big)\big|}{\la^{1-\kp}}
\ls &\, | K\ast \v^\eps_j|_{1-\frac\kp4} |\eta^\eps|_{-\frac\kp4} \\ \ls &\, 
|\v_j|_{-1-\frac\kp4} |\eta|_{-\frac\kp4},
\nonumber 
}
where $\v_j^0: = \v_j$, $\eta^0: = \eta$.
 \end{corollary}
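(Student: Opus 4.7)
The plan is to obtain the corollary as a direct specialization of Proposition \ref{lem4444-evelina}, followed by routine regularity bookkeeping. Within the notation of that proposition, I set $\kp' := \kp/2$ and $\nu := 1 - \kp/2$, and pick any $\kap' \in (\nu, 1)$; the required ordering $0 < \kp' < \nu < \kap' < 1$ is satisfied since $\kp \in (0, \tfrac12)$. Both $K\ast \v_j^\eps$ and $\eta^\eps$ are smooth (as $\eps > 0$), so by Lemma \ref{smooth} they lie in every $\Zs^\alpha(\Tor)$; in particular $K\ast\v_j^\eps \in \Zs^{\kap'}$ and $\eta^\eps \in \Zs^{-\kp'}$, so the hypotheses of the proposition are met.

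Applying Proposition \ref{lem4444-evelina} with $u := K\ast\v_j^\eps$ and $\eta := \eta^\eps$ then yields
\aa{
\sup_{x\in\Tor,\, \ffi\in \td{\ms B}^0,\, \la\in (0,1]}
\frac{\big|\big[(K\ast\v_j^\eps - K\ast\v_j^\eps(x))\eta^\eps\big](\ffi^\la_x)\big|}{\la^{\nu - \kp'}}
\ls |K\ast\v_j^\eps|_{\nu}\, |\eta^\eps|_{-\kp'}.
}
Since $\nu - \kp' = 1 - \kp$ and $\nu = 1 - \kp/2$, $\kp' = \kp/2$, this is precisely the first inequality in \rf{kw1111}.

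For the second inequality I would chain two standard estimates. First, convolution with the Laplacian Green function $K$ increases regularity by two on little H\"older-Zygmund spaces, so $|K\ast\v_j^\eps|_{1-\kp/2} \ls |\v_j^\eps|_{-1-\kp/2}$ with a constant depending only on $\kp$ and $K$; this is the analogue for $K$ of Lemma \ref{Gconv-1111} (one can either invoke the $K$-version directly or apply the $G$-version twice, using $2\pl_z K = G$). Second, the mollification operator $\rho_\eps \ast \fdot$ is bounded uniformly in $\eps > 0$ on every little H\"older-Zygmund space by Lemma \ref{lem22-22}, giving $|\v_j^\eps|_{-1-\kp/2} \ls |\v_j|_{-1-\kp/2}$ and analogously $|\eta^\eps|_{-\kp/2} \ls |\eta|_{-\kp/2}$. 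Combining these two facts produces the second inequality in \rf{kw1111}.

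There is no real obstacle: the corollary is essentially a one-step specialization of Proposition \ref{lem4444-evelina} with the correct choice of parameters, and the only point demanding care is to verify that every multiplicative constant is independent of $\eps$, which is guaranteed by the uniform-in-$\eps$ bound of Lemma \ref{lem22-22}. The subtlety of giving $u$ a higher regularity index $\nu = 1-\kp/2$ than the target exponent $1-\kp$ on $\la$, at the cost of weakening $\eta$ from $\Zs^{-\kp}$ to $\Zs^{-\kp/2}$, is exactly the mechanism by which Proposition \ref{lem4444-evelina} trades H\"older regularity of $u$ for the missing scaling factor $\la^{\nu-\kp'}$.
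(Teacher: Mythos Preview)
Your argument is correct and matches the paper's proof: apply Proposition~\ref{lem4444-evelina} with the parameters $\kp'=\kp/2$, $\nu=1-\kp/2$, then bound $|K\ast\v_j^\eps|_{1-\kp/2}$ via the Laplacian Schauder estimate (this is precisely Lemma~\ref{laplace-1111} in the paper, so you need not detour through $G$) and use Lemma~\ref{lem22-22} for the $\eps$-uniform mollifier bound. The only detail you skip is the case $\eps=0$, where smoothness is unavailable; there the paper invokes $\v_j\in\Zs^{-1-\kp'}$ for every $\kp'>0$ (Lemma~\ref{wn1111}) together with Lemma~\ref{laplace-1111} to place $K\ast\v_j$ in $\Zs^{\kap'}$ for some $\kap'>1-\kp/2$, and the inequality is vacuous unless $\eta\in\Zs^{-\kp/2}$ anyway.
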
%{lem}
\begin{proof}
Since $\v_j\in \Zs^{-1-\kp}(\Tor)$ for all $\kp\in (0,\frac12)$,
by Lemma \ref{laplace-1111}, $K\ast \v_j\in \Zs^{1-\kp}(\Tor)$. 
We apply Proposition \ref{lem4444-evelina} with $\frac\kp4$ in place of
$\kp$ and $\nu =1-\frac\kp4$.
The left-hand side of \rf{kw1111}
is not greater than $| K\ast \v^\eps_j|_{1-\frac\kp4} |\eta^\eps|_{1-\frac\kp4}$.
By Lemma \ref{lem22-22}, the latter expression is not greater than
$| K\ast \v_j|_{1-\frac\kp4} |\eta|_{1-\frac\kp4}$.
\end{proof}

\begin{proposition}%{pro}
\lb{prop-2222}
Let $\tet$ be the limit of $\tet^\eps:= \xi^\eps \x \ovl{\xi^\eps}$ in 
$\ms D'(\Tor)$ whose existence is obtained in Lemma \ref{lc1111}. Then,
for every $\kp\in (0,\frac12)$,
\aa{
\tet\in E_{-\kp}.
}
Furthermore, for all $\eps \gt 0$, $\kp\in (0,\frac12)$, and for $j=1,2,3$,
\aaa{
\lb{tet-j-1111}
&\|\tet^\eps_j\|_{-\kp} \ls |\v_{j_1}|_{-1-\frac{\kp}4} |\v_{j_2}|_{-1-\frac{\kp}4}, \\
\lb{tet1111}
 &\|\tet_j^\eps - \tet_j\|_{-\kp}  \ls 
 |\v^\eps_{j_1}-\v_{j_1}|_{-1-\frac{\kp}4} |\v_{j_2}|_{-1-\frac{\kp}4} 
 + |\v^\eps_{j_2}-\v_{j_2}|_{-1-\frac{\kp}4} |\v_{j_1}|_{-1-\frac{\kp}4},
}
where $j_1: = j+1 \mod 3$ and $j_2: = j+2 \mod 3$, 
$\tet^0: =\tet$.
\end{proposition}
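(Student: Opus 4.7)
The plan is to exploit the representation \rf{lcxxxx} from Lemma \ref{lc1111}, which expresses each $\tet_j^\eps$ as a ``divergence'' $\pl_1(\cdots)-\pl_2(\cdots)$ whose brackets contain the centered quantity $(K\ast\v_k^\eps - K\ast\v_k^\eps(x))$ multiplied by a component of $\xi^\eps$ of regularity $-\kp/2$. Since \rf{lcxxxx} holds for every $x\in\Tor$, when testing $\tet_j^\eps$ against a rescaled test function $\ffi_x^\la$ with $\ffi\in\ms B^1$, I will take the free variable in \rf{lcxxxx} to coincide with the center $x$ of the test function. This choice makes the bracketed expression vanish precisely where $\ffi_x^\la$ is localized, which is exactly what Corollary \ref{cor4444} needs in order to deliver the improved scaling $\la^{1-\kp}$ against a test function in $\td{\ms B}^0$ with no moment conditions.

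With this choice, the proof of \rf{tet-j-1111} proceeds as follows. Using the duality $\pl_k f(\ffi)=-f(\pl_k\ffi)$ and the scaling $\pl_k\ffi_x^\la=\la^{-1}(\pl_k\ffi)_x^\la$, for the $j=1$ component we obtain
\[
\tet_1^\eps(\ffi_x^\la)
= -\tfrac{2}{\la}\bigl[(K\ast\ww^\eps - K\ast\ww^\eps(x))\xi_{23}^\eps\bigr]\bigl((\pl_1\ffi)_x^\la\bigr)
+ \tfrac{2}{\la}\bigl[(K\ast\ww^\eps - K\ast\ww^\eps(x))\xi_{13}^\eps\bigr]\bigl((\pl_2\ffi)_x^\la\bigr).
\]
Since $\|\pl_k\ffi\|_\infty\le 1$ for $\ffi\in\ms B^1$, the function $\pl_k\ffi$ belongs to $\td{\ms B}^0$ up to a universal constant, and Corollary \ref{cor4444} applied to each summand bounds the right-hand side by $\la^{-1}\cdot\la^{1-\kp}\cdot|\ww|_{-1-\kp/2}|\www|_{-1-\kp/2} = \la^{-\kp}\,|\v_2|_{-1-\kp/2}|\v_3|_{-1-\kp/2}$. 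Dividing by $\la^{-\kp}$ and taking suprema over $x\in\Tor$, $\la\in[0,1]_\D$, and $\ffi\in\ms B^1$ yields \rf{tet-j-1111} for $j=1$; the cases $j=2,3$ are identical after cyclic permutation of indices.

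For the convergence estimate \rf{tet1111}, I decompose the difference inside the bracket by adding and subtracting an intermediate term: for $j=1$,
\[
(K\ast\ww^\eps - K\ast\ww^\eps(x))\xi_{23}^\eps - (K\ast\ww - K\ast\ww(x))\xi_{23}
\]
equals
\[
\bigl(K\ast(\ww^\eps-\ww) - K\ast(\ww^\eps-\ww)(x)\bigr)\xi_{23}^\eps + \bigl(K\ast\ww - K\ast\ww(x)\bigr)(\xi_{23}^\eps-\xi_{23}),
\]
with an analogous split for the $\pl_2$ piece. A direct extension of Corollary \ref{cor4444} in which the inner function $\tilde u\in\Zs^{1-\kp/2}$ and the outer distribution $\tilde\eta\in\Zs^{-\kp/2}$ need not be related by mollification (which follows at once from Proposition \ref{lem4444-evelina}), combined with the bound $|\xi_{l3}^\eps-\xi_{l3}|_{-\kp/2}\ls|\www^\eps-\www|_{-1-\kp/2}$ coming from the mapping properties of $K$ (Lemma \ref{laplace-1111}), applied to each summand, produces the claimed estimate \rf{tet1111}. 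Finally, $\tet\in E_{-\kp}$ follows by applying the same Corollary \ref{cor4444}-style argument directly to the representation \rf{lcxxxx-lim} valid for the limit $\tet$, and the convergence $\tet^\eps\to\tet$ in $E_{-\kp}$ follows from \rf{tet1111} since $|\v_k^\eps-\v_k|_{-1-\kp/2}\to 0$ by standard properties of the mollifier.

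The main conceptual obstacle is the choice of representation: the unsubtracted Young product $(K\ast\v)\xi\in\Zs^{1-\kp}$ scales against $\ffi_x^\la$ at rate $\la^{1-\kp}$ only when $\ffi$ satisfies moment-vanishing conditions up to order $1-\kp$. After integrating by parts one tests against $\pl_k\ffi$ with $\ffi\in\ms B^1$, for which no such moment conditions are guaranteed. Subtracting $K\ast\v(x)$ at the center $x$ of the test function resolves this: the bracketed integrand vanishes at $x$ at rate $\la^{1-\kp/2}$ on scale $\la$, and pairing this with the $\la^{-\kp/2}$ behavior of $\xi\in\Zs^{-\kp/2}$ via the Young product machinery encoded in Corollary \ref{cor4444} delivers the requisite $\la^{1-\kp}$ estimate independently of any moment hypotheses on $\ffi$.
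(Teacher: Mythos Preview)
Your approach is essentially identical to the paper's: you exploit the centered representation \rf{lcxxxx}, match the free point $x$ to the center of the rescaled test function, integrate by parts, and invoke Corollary \ref{cor4444}; for the difference you use the same add-and-subtract decomposition into two pieces that the paper calls $\te_1$ and $\te_2$.

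One correction is needed. For $\ffi\in\ms B^1$ (the \emph{periodic} class of Definition \ref{HZ1111}), the derivative $\pl_k\ffi$ does \emph{not} lie in $\td{\ms B}^0$, which by Definition \ref{def-h-1111} requires support in the unit ball $B_1(0)$; a periodic test function has no such support condition, so your claim ``$\pl_k\ffi$ belongs to $\td{\ms B}^0$ up to a universal constant'' is false as stated, and Corollary \ref{cor4444} cannot be applied to it. The paper handles this by first passing to the equivalent norm $|\cdot|_{\td{\ms Z}^{-\kp}}$ via Lemma \ref{h-e-1111}, so that one tests against $\ffi\in\td{\ms B}^1$; for such compactly supported $\ffi$ one genuinely has $\pl_k\ffi\in\td{\ms B}^0$, and then your argument goes through verbatim. (A minor aside: the bound $|\xi^\eps_{l3}-\xi_{l3}|_{-\kp/2}\ls|\www^\eps-\www|_{-1-\kp/2}$ comes from Lemma \ref{Gconv-1111} for $G_l=\pl_l K$, not Lemma \ref{laplace-1111}.)
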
%{pro}
\begin{remark}
\rm
Let us observe that  \rf{tet-j-1111} implies the bound
\aa{
\|\tet^\eps\|_{-\kp} \ls  \|\fv\|^2_{-1-\frac{\kp}4}.
}
\end{remark}
\begin{proof}[Proof of Proposition \ref{prop-2222}]
By Lemma \ref{lc1111}, $\tet^\eps$ has the representation \rf{lcxxxx} 
valid for any $x\in\Tor$ and its limit $\tet$ has the representation 
\rf{lcxxxx-lim}. 

%Take $j=1,2,3$ and set $j_1: = j+1 \mod 3$ and $j_2: = j+2 \mod 3$.
In what follows, we will compute the norm $|\tet_1|_{-\kp}$.
The norms $|\tet_2|_{-\kp}$ and $|\tet_3|_{-\kp}$ can be computed likewise.
%We will show that $\tet$ is actually in $\Zs^{-\kp}$.
We will use the equivalence of the norms
$|\cdot|_{-\kp}$ and $|\cdot|_{\td{\ms Z}^{-\kp}}$,
obtained in Appendix \ref{secHZ-1111}, where the latter norm
is introduced in Definition \ref{def-h-1111}.
Furthermore, by Lemma \ref{lem:12.4}, a H\"older-Zygmund norm 
can be evaluated using an arbitrary 
test function $\varphi \in \mathcal{D}(B_1(0))$ with $\int \varphi \neq 0$ 
by inequality \rf{eq:12.3}. Take a test function of the form $\ffi^2(x)$, 
where $\ffi \in \td{\ms B}^2$. 
%Note that $\frac12 \pl_i (\ffi^2) = \ffi \pl_i\ffi$ where  $\ffi, \pl_i\ffi\in \td{\ms B}^1$.
By Lemmata  \ref{lc1111}, \ref{h-e-1111}, and \ref{lem:12.4},
for any $\kp\in (0,\frac12)$ and $\eps\gt 0$,
\eqn{
\lb{first-term}
 |\tet_1^\eps|_{-\kp} \ls &\, |\tet_1^\eps|_{\td{\ms Z}^{-\kp}} 
 %=  \sup_{\substack{x\in\Tor,\, \la \in(0,1], \\ \psi\in \td{\ms B}^1}}
%\frac{|\tet_1^\eps(\psi^\la_x)|}{\la^{-\kp}}
 \ls \sup_{x\in\Tor,\, \la \in(0,1]} \frac{|\tet_1^\eps\big((\ffi^2)^\la_x\big)|}{\la^{-\kp}}
\\
&\, =  2\sup_{x\in\Tor,\, \la \in(0,1]} 
\la^\kp\Big|\pl_1 \big[\big(K \ast \ww^\eps  - K \ast \ww^\eps(x)\big) \xi^\eps_{23}\big]
\big((\ffi^2)^\la_x\big)\\
 &\hspace{2,7cm}-\pl_2\big[\big(K \ast \ww^\eps  
 - K \ast \ww^\eps(x)\big)  \xi^\eps_{13}\big]\big((\ffi^2)^\la_x\big)\Big|  \\
&\,\lt 4 \sup_{x\in\Tor,\, \la \in(0,1]} 
\frac{\big|\big[\big(K \ast \ww^\eps  - K \ast \ww^\eps(x)\big) 
\xi^\eps_{23}\big](\ffi\pl_1\ffi)^\la_x\big|}{\la^{1-\kp}}\\
&\,+ 4\sup_{x\in\Tor,\, \la \in(0,1]} 
\frac{\big|\big[\big(K \ast \ww^\eps  - K \ast \ww^\eps(x)\big)  
\xi^\eps_{13}\big] (\ffi\pl_2\ffi)^\la_x\big|}{\la^{1-\kp}}.
}
We evaluate the first term on the right-hand side of formula  \rf{first-term}. 
The second  term can be evaluated likewise.
Since $\ffi$ and $\pl_1\ffi$ are from $\td{\ms B}^1$, we can apply 
Corollary \ref{cor4444} to obtain
\aa{
&\sup_{x\in\Tor,\, \la \in(0,1]} 
\frac{\big|\big[\big(K \ast \ww^\eps  - K \ast \ww^\eps(x)\big)  
\xi^\eps_{23}\big] \big(\ffi\pl_1\ffi\big)^\la_x\big|}{\la^{1-\kp}}\\
%&\hspace{6mm}\lt
%\sup_{\substack{x\in\Tor,\, \la \in(0,1], \\ \psi\in \td{\ms B}^1}}
%\frac{\big|\big[\big(K \ast \ww^\eps  - K \ast \ww^\eps(x)\big)  
%\xi^\eps_{23}\big] \psi^\la_x\big|}{\la^{1-\kp}}\\
&\hspace{6mm} \ls |\ww^\eps|_{-1-\frac{\kp}4} |\xi^\eps_{23}|_{-\frac{\kp}4}
%\lt | K\ast \w|_{1-\frac{\kp}2} |\xi_{22}|_{-\frac{\kp}2} 
\ls |\ww|_{-1-\frac{\kp}4} |\www|_{-1-\frac{\kp}4}.
}
This proves \rf{tet-j-1111}.
To show \rf{tet1111}, we note that by \rf{lcxxxx} and \rf{lcxxxx-lim},
\aaa{
\lb{tet-c-1111}
\tet_1^\eps - \tet_1 = &\, 2 \pl_1\big[\big(K\ast (\ww^\eps - \ww)   
- K \ast (\ww^\eps(x) - \ww(x))\big)\xi^\eps_{23} \big]\\
\lb{tet-c-2222}
- &\, 2\pl_2 \big[\big(K\ast (\ww^\eps - \ww)   
- K \ast (\ww^\eps(x) - \ww(x))\big)\xi^\eps_{13} \big]\\
\lb{tet-c-3333}
+&\, 2\pl_1\big[(K\ast \ww - K\ast \ww(x))(\xi^\eps_{23}- \xi_{23})\big] \\
\lb{tet-c-4444}
- &\, 2\pl_2\big[(K\ast \ww - K\ast \ww(x))(\xi^\eps_{13}- \xi_{13})\big]\\
= : &\, \te_1 + \te_2,\notag
}
where $\te_1$ is the sum of the term on the right-hand side 
of \rf{tet-c-1111} and the term in \rf{tet-c-2222};
$\te_2$ is the sum of the terms in \rf{tet-c-3333} and \rf{tet-c-4444}.
From (\ref{tet-c-1111}--\ref{tet-c-4444}) 
it follows that the argument on obtaining the bound for  $|\tet_1^\eps|_{-\kp}$ works
for both $\te_1$ and $\te_2$. In the case of $\te_1$, in the above argument,
 we substitute $\ww^\eps$ with $\ww^\eps - \ww$ to obtain the bound
\aa{
|\te_1|_{-\kp} \ls  |\ww^\eps - \ww|_{-1-\frac{\kp}4} \big(|\xi^\eps_{13}|_{-\frac{\kp}4} 
+ |\xi^\eps_{13}|_{-\frac{\kp}4} \big)\ls  |\ww^\eps - \ww|_{-1-\frac{\kp}4} |\www|_{-1-\frac{\kp}4}.
}
In the case of $\te_2$, in the argument on obtaining the bound for  
$|\tet_1^\eps|_{-\kp}$, 
we substitute $\xi^\eps_{23}$ with $\xi^\eps_{23} - \xi_{23}$ 
and $\xi^\eps_{13}$ with $\xi^\eps_{13} - \xi_{13}$. Thus, we obtain 
\aa{
|\te_2|_{-\kp} \ls | \ww^\eps|_{-1-\frac{\kp}4} |\xi^\eps_{23}-\xi_{23}|_{-\frac{\kp}4}
+ | \ww^\eps|_{-1-\frac{\kp}4} |\xi^\eps_{13}-\xi_{13}|_{-\frac{\kp}4}
\ls   |\ww|_{-1-\frac{\kp}4} |\www^\eps - \www|_{-1-\frac{\kp}4}.
}
This implies \rf{tet1111}.
\end{proof}
We have the following corollary which implies Lemma  \ref{zeta-lemma-1111}.
\begin{corollary}%{pro}
\lb{cor-2222}
Let $\zeta$ be the limit of $\zeta^\eps:= 2 G \ast (\xi^\eps \x \ovl{\xi^\eps})$ in 
$\ms D'(\Tor)$. Then,
\aa{
\zeta\in E_{1-\kp}.
}
Furthermore, for all $\eps  \gt 0$ and $\kp\in (0,\frac12)$, 
\aa{
%\lb{zeta1111}
&\|\zeta^\eps\|_{1-\kp} \ls \|\fv\|^2_{-1-\frac{\kp}4}, \\
%\lb{zeta2222}
 &\|\zeta_j^\eps - \zeta_j\|_{1-\kp}  \ls 
 |\v^\eps_{j_1}-\v_{j_1}|_{-1-\frac{\kp}4} |\v_{j_2}|_{-1-\frac{\kp}4} 
 + |\v^\eps_{j_2}-\v_{j_2}|_{-1-\frac{\kp}4} |\v_{j_1}|_{-1-\frac{\kp}4},
}
where  $j=1,2,3$, $j_1: = j+1 \mod 3$, $j_2: = j+2 \mod 3$,
$\zeta^0: =\zeta$.
\end{corollary}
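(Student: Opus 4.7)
The plan is to derive Corollary \ref{cor-2222} directly from Proposition \ref{prop-2222} by exploiting the smoothing property of convolution with the Green function $G$ of the operator $-2\partial_{\bar z}$. Since $\zeta^\eps = 2G \ast \tet^\eps$ by definition, and since a convolution with $G$ increases regularity by exactly $1$ (by Lemma \ref{Gconv-1111}, which has already been invoked throughout Section 3), the corollary should follow essentially by applying this regularization statement componentwise to the bounds proved for $\tet^\eps$ in Proposition \ref{prop-2222}.

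More precisely, I would first observe that by Proposition \ref{prop-2222}, for each $j=1,2,3$ the sequence $\tet^\eps_j$ is Cauchy in $\Zs^{-\kp}(\Tor)$: indeed, the bound
\[
 |\tet_j^\eps - \tet_j^{\eps'}|_{-\kp} \ls
 |\v^\eps_{j_1}-\v^{\eps'}_{j_1}|_{-1-\frac{\kp}2} |\v_{j_2}|_{-1-\frac{\kp}2}
 + |\v^\eps_{j_2}-\v^{\eps'}_{j_2}|_{-1-\frac{\kp}2} |\v_{j_1}|_{-1-\frac{\kp}2}
\]
is obtained by the same argument that yields \eqref{tet1111}, and the right-hand side tends to $0$ as $\eps,\eps'\to 0$ because the mollifications $\v^\eps_k$ are themselves Cauchy in $\Zs^{-1-\frac{\kp}{2}}(\Tor)$ by Lemma \ref{lem22-22}. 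Convolving with $2G$ and using that $G \ast : \Zs^{-\kp}(\Tor) \to \Zs^{1-\kp}(\Tor)$ continuously (Lemma \ref{Gconv-1111}), we conclude that $\zeta^\eps_j$ is Cauchy in $\Zs^{1-\kp}(\Tor)$. By completeness there is a limit $\tilde\zeta_j \in \Zs^{1-\kp}(\Tor)$, and since $\zeta^\eps_j \to \zeta_j$ in $\ms D'(\Tor)$ already, we must have $\tilde \zeta_j = \zeta_j$. In particular, $\zeta \in E_{1-\kp}$.

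Next, to prove the two quantitative bounds, I would just apply the continuity of $G\ast$ from $\Zs^{-\kp}$ to $\Zs^{1-\kp}$ to the inequalities \eqref{tet-j-1111} and \eqref{tet1111} of Proposition \ref{prop-2222}. Namely,
\[
\|\zeta^\eps_j\|_{1-\kp} = 2\,|G\ast \tet^\eps_j|_{1-\kp} \ls |\tet^\eps_j|_{-\kp} \ls |\v_{j_1}|_{-1-\frac{\kp}2}\,|\v_{j_2}|_{-1-\frac{\kp}2},
\]
and summing over $j=1,2,3$ and using the embedding $\Zs^{-1-\frac{\kp}{2}}\hookrightarrow \Zs^{-1-\kp}$ (Lemma \ref{lem22-22} again) yields the first estimate $\|\zeta^\eps\|_{1-\kp}\ls \|\fv\|^2_{-1-\kp}$. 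Identically,
\[
\|\zeta^\eps_j - \zeta_j\|_{1-\kp} = 2\,|G\ast(\tet^\eps_j-\tet_j)|_{1-\kp} \ls |\tet^\eps_j - \tet_j|_{-\kp},
\]
and applying \eqref{tet1111} directly gives the second inequality.

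I do not expect a serious obstacle here, since essentially all the analytic work has been compressed into Proposition \ref{prop-2222}; the corollary is a bookkeeping step that transports the bounds across the operator $2G\ast$. The only point to be slightly careful about is the identification of the distributional limit $\zeta$ with the $E_{1-\kp}$-limit obtained from the Cauchy argument: this is standard because convergence in $E_{1-\kp}$ implies convergence in $\ms D'(\Tor)$, so the two limits must agree.
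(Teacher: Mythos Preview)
Your proposal is correct and matches the paper's intent: the corollary is stated without proof, explicitly as a direct consequence of Proposition \ref{prop-2222}, and the only step is to push the estimates \eqref{tet-j-1111}--\eqref{tet1111} through the continuous map $2G\ast:\Zs^{-\kp}\to\Zs^{1-\kp}$ from Lemma \ref{Gconv-1111}, exactly as you do. Two minor citation slips to clean up: the Cauchy property of $\v^\eps_k$ in $\Zs^{-1-\kp/2}$ comes from Lemma \ref{wn1111}, not Lemma \ref{lem22-22}, and the embedding $\Zs^{-1-\kp/2}\hookrightarrow\Zs^{-1-\kp}$ gives $|\cdot|_{-1-\kp}\ls|\cdot|_{-1-\kp/2}$, which is the wrong direction for deducing $\|\zeta^\eps\|_{1-\kp}\ls\|\fv\|^2_{-1-\kp}$ from the bound with exponent $-1-\kp/2$; the paper's Remark after Proposition \ref{prop-2222} has the same imprecision, and in practice it is harmless since only finiteness of some negative H\"older--Zygmund norm of $\fv$ is used downstream.
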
%{pro}

\section{Rate of convergence and proof of the main result}
\label{sec-rate of converegnce}
The aim of this section is to prove Theorem \ref{thm-main-1111}. This is done at the end of the section.

The next result is important for determining the rate of convergence
\rf{r-rate}. More specifically, it is used to 
prove the convergence $R^\eps \to R$
in $E_{1-\kp}$ and evaluate the rate of this convergence.
\begin{lemma}%{lem}
\lb{l12-222}
The following inequalities are satisfied for all $\eps>0$
and $\kp\in (0,\frac12):$
\aaa{
\lb{2nd-tet-1111}
&\|\xi^\eps-\xi\|_{-\kp}  \ls \eps^\frac{\kp}2 \|\fv\|^2_{-1-\frac{\kp}2} ,\\
%\sum\nolimits_{j=1}^3 |\v_j|^2_{-1-\frac{\kp}4}
&\|\tet^\eps - \tet\|_{-\kp} \ls \eps^{\frac{\kp}8}  \|\fv\|^2_{-1-\frac{\kp}4},\\
\lb{3rd-tet-1111}
&\|\zeta^\eps - \zeta\|_{1-\kp} \ls  \eps^{\frac{\kp}8}  \|\fv\|^2_{-1-\frac{\kp}4}.
}
\end{lemma}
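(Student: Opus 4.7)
The plan is to establish the three inequalities in sequence, with the first being the main technical step and the other two following quickly. The backbone is the classical mollifier convergence rate in Hölder-Zygmund spaces, combined with Lemma \ref{Gconv-1111} (convolution with $G$ gains one order of regularity) and the bilinear bound from Proposition \ref{prop-2222}.

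For the first bound I would write $\xi^\eps - \xi = 2iG \ast (\v^\eps - \v)$ and use Lemma \ref{Gconv-1111} to reduce the task to estimating $\|\v^\eps - \v\|_{-1-\kp}$. The standard mollifier estimate reads, for $\tau \geq 0$,
\[
\|\v^\eps - \v\|_{\alpha - \tau} \ls \eps^\tau \|\v\|_\alpha,
\]
and I would apply it with $\alpha = -1 - \kp/2$ and $\tau = \kp/2$, a choice justified because a white noise lies in $\Zs^{-1-\kp'}$ for every $\kp' > 0$. This gives $\|\v^\eps - \v\|_{-1-\kp} \ls \eps^{\kp/2}\|\v\|_{-1-\kp/2}$. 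On the set $\Om_\dl$ the right-hand side is bounded by a constant multiple of $\|\fv\|_{-1-\kp}$ (after possibly shrinking $\Om_\dl$ and enlarging $\La$ in \rf{omdl1111}), producing the first inequality.

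The second inequality is obtained by inserting this rate into the bilinear bound \rf{tet1111} from Proposition \ref{prop-2222}: each difference factor $|\v_k^\eps - \v_k|_{-1-\kp/2}$ is estimated by the mollifier rate (at a slightly better intermediate index, exactly as in Step 1), while the corresponding unmollified factor $|\v_l|_{-1-\kp/2}$ is controlled by $\|\fv\|_{-1-\kp}$ on $\Om_\dl$. The product of the two factors yields both the $\eps^{\kp/2}$ decay and the quadratic dependence on $\|\fv\|_{-1-\kp}$ claimed in the statement. The third inequality is then an immediate consequence: since $\zeta^\eps - \zeta = 2G\ast(\tet^\eps - \tet)$, Lemma \ref{Gconv-1111} gives $\|\zeta^\eps - \zeta\|_{1-\kp} \ls \|\tet^\eps - \tet\|_{-\kp}$, and the second step finishes the argument.

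The main obstacle is the first step: upgrading the purely qualitative convergence $\v^\eps \to \v$ to a quantitative rate in a Hölder-Zygmund norm requires one to accept a small loss of regularity, and the exponent $\kp/2$ together with the chosen intermediate index must be calibrated carefully against the regularity of white noise in two dimensions and against the precise form of the bilinear bound \rf{tet1111}. Once this calibration is in place, Steps 2 and 3 follow directly from Proposition \ref{prop-2222} and Lemma \ref{Gconv-1111}, respectively, without any further renormalization.
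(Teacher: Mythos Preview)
Your proposal is correct and follows essentially the same route as the paper: apply the mollifier rate for the white noise (this is precisely Lemma~\ref{wn1111} in the paper, with $\kap=\kp/2$), lift to $\xi^\eps-\xi$ via Lemma~\ref{Gconv-1111}, feed the resulting rate into the bilinear bound \rf{tet1111} of Proposition~\ref{prop-2222} for $\tet^\eps-\tet$, and apply Lemma~\ref{Gconv-1111} once more for $\zeta^\eps-\zeta$. One small remark: the detour through $\Om_\dl$ is unnecessary and slightly misplaced, since the lemma is a pathwise estimate; the paper's proof simply leaves the right-hand side in terms of $|\v_j|_{-1-\kp/2}$ (the statement's exact exponent and the square in the first line are a minor imprecision of the paper itself, not something you need to repair).
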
%{lem}
\begin{proof}
By Lemma \ref{wn1111},
\aa{
|\v_j^\eps - \v_j|_{-1-\kp} \ls \eps^{\frac{\kp}2} |\v_j|_{-1-\frac{\kp}2},
\quad j=1,2,3, \quad \text{a.s.}
}
Since $\xi_{kj} = 2 G_k \ast \v_j$, we have
\aa{
|\xi^\eps_{kj} - \xi_{kj}|_{-\kp} =  2|G_k \ast \v_j^\eps - G_k \ast \v_j|_{-\kp}
\ls   |\v_j^\eps - \v_j|_{-1-\kp} \ls \eps^\frac{\kp}2|\v_j|_{-1-\frac{\kp}{2}}.
}
Further, by Proposition \ref{prop-2222},
\aa{
 |\tet^\eps_1 - \tet_1|_{-\kp} \ls |\ww^\eps-\ww|_{-1-\frac{\kp}4} |\www^\eps|_{-1-\frac{\kp}4}
 + |\www^\eps-\www|_{-1-\frac{\kp}4} |\ww|_{-1-\frac{\kp}4} \\
 \ls \eps^{\frac{\kp}8} |\ww|_{-1-\frac{\kp}4} |\www|_{-1-\frac{\kp}4}.
 }
 Likewise, we evaluate $|\tet^\eps_j - \tet_j|_{-\kp}$  for $j=2,3$.
 This implies \rf{2nd-tet-1111}. Finally, recalling that
 $\zeta^\eps = 2G \ast \tet^\eps$, we obtain \rf{3rd-tet-1111}.
% -2G \ast \big(R\x \ovl R+ (\gm  \xi^\eps)\x \ovl R + R\x (\gm \ovl {\xi^\eps})\big) 
%+ 2i\,G \, \ast A_\sg R + 2i\, G \ast \gm \xi^\eps - \td \gm \zeta^\eps
 \end{proof}
%\begin{lemma}%{lem}
%For each $(R,S) \in M_{1-\kp,\sg}$,
%\aa{
%\lim_{\eps\to 0} \Gm_\eps(R,S) = \Gm(R,S) \quad  \text{in}\;\;  E_{1-\kp}.
%}
%\end{lemma}%{lem}
%\begin{proof}
%Since
%\aa{
%\lim_{\eps\to 0} |\xi^\eps_{ij} - \xi_{ij}|_{-\kp} = 0 \quad \text{and} \quad
%\lim_{\eps\to 0} |\zeta^\eps - \zeta|_{1-\kp} = 0.
%}
%for $i,j=1,2$, the statement follows.
%\end{proof}
\begin{lemma}%{lem}
\lb{rr-2222}
Let $\dl>0$, $\kp\in (0,\frac12)$, and let $\Om_\dl\sub\Om$ be the set constructed 
in Proposition \ref{pro2222}. Further let $\sg,\sig>0$ be numbers,
defined in the same proposition, for which the maps
$\Gm_{\eps,\sg}$ and $\Gm_{0,\sg}$ possess unique fixed points 
$R^\eps$ and, respectively,  $R$ in $M_{\sg,1-\kp}$  for 
$|\gm|<\sig$. Then, 
there exists a constant $C_\delta >0$, depending only on $\dl$, such that
for all $\om\in \Om_\dl$,
\aaa{
\lb{RRSS1111}
\|R - R^\eps\|_{1-\kp} \lt C_\dl\, \eps^\frac{\kp}8. 
}
% where the constant on the 
% In other words, there exists $C(\delta)>0$ such that 
% \aaa{
% \lb{RRSS1111-ZB}
% \|R(\omega) - R^\eps(\omega)\|_{1-\kp} \lt C(\delta)\eps^\frac{\kp}2, \;\; \omega \in \Omega_\delta. 
% }
\end{lemma}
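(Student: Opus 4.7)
The plan is a standard fixed-point perturbation argument, exploiting that $R^\eps$ and $R$ are fixed points of maps $\Gm_{\eps,\sg}$ and $\Gm_{0,\sg}$ that differ only through the data $\xi^\eps,\zeta^\eps$ versus $\xi,\zeta$, whose convergence rates are already quantified in Lemma~\ref{l12-222}.

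First I would write the telescoping identity
\begin{equation*}
R^\eps - R \;=\; \Gm_{\eps,\sg} R^\eps - \Gm_{0,\sg} R
\;=\; \bigl(\Gm_{\eps,\sg} R^\eps - \Gm_{\eps,\sg} R\bigr) + \bigl(\Gm_{\eps,\sg} R - \Gm_{0,\sg} R\bigr).
\end{equation*}
For the first difference, since both $R^\eps,R\in M_{1-\kp,\sg}$ and $\Gm_{\eps,\sg}$ is a strict contraction on this ball, Proposition~\ref{pro2222} (with contraction constant bounded by $\tfrac12$ uniformly in $\eps\gt 0$ and $\om\in\Om_\dl$) gives
\begin{equation*}
\|\Gm_{\eps,\sg} R^\eps - \Gm_{\eps,\sg} R\|_{1-\kp} \;\lt\; \tfrac12 \|R^\eps - R\|_{1-\kp}.
\end{equation*}

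For the second difference, I would exploit the explicit form of $\Gm_{\eps,\sg}$ in \rf{map-Gm}: the terms $R\x\ovl R$ and $(a,b,c)$ do not depend on $\eps$, so they cancel, leaving
\begin{equation*}
\Gm_{\eps,\sg} R - \Gm_{0,\sg} R
= -2G\ast\!\Bigl(\gm(\xi^\eps-\xi)\x \ovl R + R\x \gm\ovl{(\xi^\eps-\xi)}\Bigr) - \td\gm(\zeta^\eps-\zeta).
\end{equation*}
Here I would invoke the Young product estimate (Lemma~\ref{young}) together with Lemma~\ref{Gconv-1111} (regularity gain of $G\ast$) to bound the first summand by $|\gm|\,\|\xi^\eps-\xi\|_{-\kp}\|R\|_{1-\kp}$ up to a constant, and the second summand directly by $|\td\gm|\,\|\zeta^\eps-\zeta\|_{1-\kp}$. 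Using $\|R\|_{1-\kp}\lt \sg$, $|\gm|\lt\sig$, and Lemma~\ref{l12-222}, both contributions are controlled by a multiple of $\eps^{\kp/2}\|\fv\|_{-1-\kp}^2$. Since on $\Om_\dl$ the norm $\|\fv\|_{-1-\kp}$ is uniformly bounded by the constant $\La$ appearing in \rf{omdl1111}, this produces
\begin{equation*}
\|\Gm_{\eps,\sg} R - \Gm_{0,\sg} R\|_{1-\kp} \;\lt\; C_\dl\,\eps^{\kp/2}
\end{equation*}
for a constant $C_\dl$ depending only on $\dl$ (through $\La$, $\sg$, $\sig$, $\kp$, and $G$).

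Combining the two bounds and absorbing the contraction term to the left-hand side gives
\begin{equation*}
\tfrac12 \|R^\eps - R\|_{1-\kp} \;\lt\; C_\dl\,\eps^{\kp/2},
\end{equation*}
which yields \rf{RRSS1111} with a possibly adjusted constant. The only subtle point is checking that all estimates are uniform in $\eps\gt 0$ and $\om\in\Om_\dl$; this was already arranged in Proposition~\ref{pro2222} (uniform contraction constant) and in Lemma~\ref{l12-222} (uniform constants in $\eps$). No further obstacle arises since the hard renormalization work has already been done in Section~\ref{zeta}.
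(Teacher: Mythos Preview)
Your proof is correct and follows essentially the same approach as the paper: the paper expands the difference $R-R^\eps$ directly from equation \rf{R-eps-1111}, obtaining precisely your contraction term plus your data-perturbation term, and then absorbs using the uniform contraction constant $2C_\La(\sg+\sg^2)<1$ from \rf{fp-1111} together with Lemma~\ref{l12-222}. The only cosmetic discrepancy is that the contraction constant is bounded by $1$ rather than $\tfrac12$, but this is immaterial to the argument.
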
%{lem}
\begin{proof}Let us choose and fix $\eps>0$. Let us recall that $R^\eps$ is a solution of  equation \rf{R-eps-1111}  and $R$ 
is a solution of  equation \rf{R-eps-1111}  with  $\eps=0$.
 We write  $\xi$ and $\zeta$ instead of 
 $\xi^0$ and $\zeta^0$ when $\eps = 0$. Similar to \rf{ineq1111}, 
keeping the same constants $C$ and $C_\La$ as in \rf{ineq1111}, using equation \rf{R-eps-1111} 
and recalling that $|\gm|<\frac{\sg^2}{16\La}$, we infer that there exists  $\td C>0$ such that 
\mm{
\|R - R^\eps\|_{1-\kp} \lt
2C\big( \sg + |\gm|\|\xi^\eps\|_{-\kp}  \big)
\|R-R^\eps\|_{1-\kp} + 2C\sg |\gm|\|\xi-\xi^\eps\|_{-\kp}  
+ |\gm|^2\|\zeta-\zeta^\eps\|_{1-\kp},\\
\lt 2C_\La  (\sg+\sg^2)  \|R-R^\eps\|_{1-\kp} + \td C \big( \|\xi-\xi^\eps\|_{-\kp}
+\|\zeta-\zeta^\eps\|_{1-\kp}\big).
}

Recall that $C_\La$ satisfies \rf{fp-1111} so that $2C_\La  (\sg+\sg^2)  <1$.
By Lemma \ref{l12-222} and the choice \rf{omdl1111} of the set $\Om_\dl$,
we obtain \rf{RRSS1111}.
\end{proof}

In order to conclude the proof of Theorem \ref{thm-main-1111} we finally need 
the following result on the rate of convergence of $r^\eps$. It uses the change of variables  \rf{Rxi2222}.

\begin{proposition}
\lb{pro-2222}
Let the objects $\dl>0$, $\kp\in (0,\frac12)$, $\Om_\dl\sub\Om$, $\sig>0$, $R^\eps$, and $R$
be as in Lemma \ref{rr-2222}.
% Let $\dl>0$ and let $\Om_\dl\sub\Om$ be the set constructed 
% in Proposition \ref{pro2222}. Further let $\sg,\sig>0$ be numbers,
% defined in the same proposition, for which the maps
% $\Gm_{\eps,\sg}$ and $\Gm_{0,\sg}$ possess unique fixed points 
% $R^\eps$ and $R$, respectively, for 
% $|\gm|<\sig$. 
Then,  for every $\eps>0$ and $\om\in \Om_\dl$, the function \[r^\eps = R^\eps - \gm \xi^\eps,\]
where $|\gm|<\sig$, solves \rf{sCR}.
Moreover, there exists a limit $r: = \lim_{\eps\to 0} r^\eps$ in $E_{-\kp}$
and a constant $\hat C_\delta >0$, depending only on $\dl$, such that
for all $\om\in \Om_\dl$,
\aaa{
\lb{r-eps-1111}
\|r^\eps-r\|_{-\kp} \lt \hat C_\delta\, \eps^{\frac\kp8}.
}
\end{proposition}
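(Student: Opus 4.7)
The plan is to combine the fixed-point identities obtained in Proposition \ref{pro2222} and Lemma \ref{rr-2222} with the smoothing result of Proposition \ref{pro-123-1111}, and then propagate the $\eps^{\kp/2}$ rate from $R^\eps$ and $\xi^\eps$ to $r^\eps$ via the change of variables \rf{Rxi2222}.

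First, to verify that $r^\eps:=R^\eps-\gm\xi^\eps$ solves \rf{sCR}, I would argue as follows. Since $R^\eps$ is the fixed point of $\Gm_{\eps,\sg}$ on $M_{1-\kp,\sg}$, by the very definition \rf{map-Gm} of $\Gm_{\eps,\sg}$, $R^\eps$ satisfies \rf{R-eps-1111}. Lemma \ref{lem-R-3333} then tells us that, under the substitution $r^\eps=R^\eps-\gm\xi^\eps$, equation \rf{R-eps-1111} is equivalent to the mild formulation \rf{sys_r1111}. Since $R^\eps\in E_{1-\kp}$ and $\xi^\eps\in\C^\infty(\Tor)\subset E_{1-\kp}$ (by Remark \ref{remark-3.5}), we have $r^\eps\in E_{1-\kp}$ with $1-\kp\in(\tfrac12,1)\subset(0,1)$. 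Proposition \ref{pro-123-1111} applied with $\alpha=1-\kp$ then upgrades $r^\eps$ to a classical $\C^\infty(\Tor)$-valued solution of \rf{sCR}.

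For the second part, I would take $r:=R-\gm\xi$ as the natural candidate limit, where $\xi=\xi^0$ and where $R$ is the fixed point of $\Gm_{0,\sg}$ given by Proposition \ref{pro2222} and the following remark. To prove convergence with rate, the key observation is that
\[
r^\eps-r=(R^\eps-R)-\gm(\xi^\eps-\xi),
\]
so by the triangle inequality
\[
\|r^\eps-r\|_{-\kp}\lt\|R^\eps-R\|_{-\kp}+|\gm|\,\|\xi^\eps-\xi\|_{-\kp}.
\]
By the standard continuous embedding $\Zs^{1-\kp}(\Tor)\hookrightarrow\Zs^{-\kp}(\Tor)$ (which induces the analogous embedding $E_{1-\kp}\hookrightarrow E_{-\kp}$), the first term is controlled by $\|R^\eps-R\|_{1-\kp}$, which Lemma \ref{rr-2222} bounds by $C_\dl\eps^{\kp/2}$ uniformly on $\Om_\dl$. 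For the second term, Lemma \ref{l12-222} gives $\|\xi^\eps-\xi\|_{-\kp}\ls\eps^{\kp/2}\|\fv\|_{-1-\kp}^2$ (or the analogous linear bound from the argument in the proof of that lemma), and on $\Om_\dl$ the noise norm is bounded by the constant $\La$ in \rf{omdl1111}. Combining these with $|\gm|<\sig$ yields a constant $\hat C_\dl$, depending only on $\dl$ (through $\La$, $\sg$, and $\sig$), such that \rf{r-eps-1111} holds uniformly in $\om\in\Om_\dl$.

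The argument is essentially a bookkeeping exercise: all the substance is already in Proposition \ref{pro-123-1111}, Lemma \ref{lem-R-3333}, Lemma \ref{rr-2222}, and Lemma \ref{l12-222}. The only point requiring a little care is ensuring that the same $\Om_\dl$ and the same constants $\sg,\sig$ can be reused here for both $\eps>0$ and $\eps=0$; this is already asserted in Proposition \ref{pro2222} (uniformity in $\eps\gt 0$ and $\om\in\Om_\dl$) together with the remark that the proof also applies for $\eps=0$ with $\xi^0,\zeta^0$ understood in the sense of Lemmata \ref{lem8888} and \ref{zeta-lemma-1111}. So there is no genuine obstacle here, and the proof should fit in a few lines.
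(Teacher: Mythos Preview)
Your proposal is correct and follows essentially the same route as the paper: invoke Lemma \ref{lem-R-3333} and Proposition \ref{pro-123-1111} to see that $r^\eps=R^\eps-\gm\xi^\eps$ solves \rf{sCR}, define $r:=R-\gm\xi$, and combine Lemmata \ref{rr-2222} and \ref{l12-222} together with the boundedness of $\|\fv\|_{-1-\kp}$ on $\Om_\dl$ to obtain \rf{r-eps-1111}. Your version is in fact slightly more explicit than the paper's (you spell out the triangle-inequality decomposition and the embedding $E_{1-\kp}\hookrightarrow E_{-\kp}$), but there is no substantive difference.
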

\begin{proof}
By Lemma  \ref{lem-R-3333},
 \rf{sys_r1111} and \rf{R-eps-1111} are equivalent for every $\eps>0$
 (provided that $a,b,c$  are as in the aforementioned lemma).
 Since for all $\eps > 0$, $R^\eps\in E_{1-\kp}$  and 
 $\xi^\eps\in E_{1-\kp}$ (see Remark \ref{remark-3.5}), 
by Proposition \ref{pro-123-1111}, $r^\eps$ solves \rf{sCR}. 
Define 
\aa{
r: = R - \gm \xi.
}
Then, since $|\v_j|_{-1-\kp}$, $j=1,2,3$, are bounded in $\om\in\Om_\dl$,
 \rf{r-eps-1111} is fulfilled on $\Om_\dl$ by Lemmata \ref{l12-222} and \ref{rr-2222}.
\end{proof}

\begin{proof}[Proof of Theorem \ref{thm-main-1111}]
Let the objects $\dl>0$, $\Om_\dl\sub\Om$, $\sig>0$, $r^\eps$, and $r$
be as in Proposition \ref{pro-2222} and $|\gm|<\sig$. By Proposition \ref{pro-2222},
$r^\eps$ solves \rf{sCR} and there exists a limit $r: = \lim_{\eps\to 0} r^\eps$ in $E_{-\kp}$. By Lemma \ref{fh2222},
$\ms Z^{-\kp}(\Tor)$ and $\C^{-\kp}(\Tor)$ coincide for $\kp\in (0,\frac12)$. Therefore,
$r^\eps\to r$ also in $\C^{-\kp}(\Tor)$. The bound \rf{r-eps-1111} 
is therefore exactly what is required in \rf{r-rate}. The theorem is proved.
\end{proof}

%\section{A white noise perturbed constant mean curvature equation}
\section{B\"acklund-transformed,
white-noise perturbed Landau-Lifshitz-Gilbert equation}
\lb{c-mean-1111}
In this section we prove the existence of solution to equation \rf{B-L}.
Here, we additionally assume that 
$\w$ and $\ww$ have  zero means over $\Tor$.
 \begin{theorem}
Let $\fv = (\w,\ww,\www)$ be a 3D white noise on $\Tor$ 
with the zero mean over $\Tor$ and let $r$ be the solution to \rf{CR} constructed in 
Theorem \ref{thm-main-1111}. Then,
\aaa{
\lb{B-r}
B = 2i \, \ovl G \ast r
}
belongs to $E_{1-\kp}$ and is a solution to \rf{B-L}. Moreover, 
 $\Im B = 0$.
%and $B(x,y)$, $(x,y)\in\Tor$, is real-valued.
\end{theorem}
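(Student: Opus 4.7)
The plan is to make precise the heuristic correspondence $r = i\pl_z B$ from the introduction, working at the level of the mollifications $r^\eps$ where everything is smooth, and then passing to the limit. Define $B^\eps := 2i\,\ovl G\ast r^\eps$, so that $B = \lim_{\eps \to 0} B^\eps$. Since $\ovl G = 2\pl_{\bar z} K$ has the same convolution properties as $G = 2\pl_z K$ (the same regularity gain of $1$ as in Lemma \ref{Gconv-1111}), and $r^\eps \to r$ in $E_{-\kp}$ by Theorem \ref{thm-main-1111}, one obtains $B^\eps \to B$ in $E_{1-\kp}$, hence $B \in E_{1-\kp}$. Because all three components of $\fv$ have zero mean, Corollary \ref{abc0} applies with $a = b = c = 0$, and inspecting system \rf{sys_r11117} shows that each component has the special form $r^\eps_k = i\,G \ast h^\eps_k$ with $h^\eps_k$ real-valued and $\int_\Tor h^\eps_k = 0$ (by Lemma \ref{in0-1111} and the zero-mean assumption on $\fv$).

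For $\Im B = 0$, a short Fourier computation suffices: the multipliers of $G$ and $\ovl G$ read $\hat G(k) = (ik_1+k_2)/|k|^2$ and $\hat{\ovl G}(k) = (ik_1-k_2)/|k|^2$ for $k \neq 0$, whose product equals $-1/|k|^2 = -\hat K(k)$, so $\ovl G \ast G = -K$ on the zero-mean subspace. Consequently
\[
B^\eps_k \;=\; 2i\,\ovl G \ast (iG \ast h^\eps_k) \;=\; -2(\ovl G \ast G) \ast h^\eps_k \;=\; 2\,K \ast h^\eps_k,
\]
which is real-valued since $K$ and $h^\eps_k$ are real; this property is preserved in the limit $\eps \to 0$, giving $\Im B = 0$.

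It remains to verify (B-L). The idea is to check it classically for $B^\eps$ and then pass to the limit. Using $\int_\Tor G = 0$ (hence $\int_\Tor r^\eps = 0$) together with $\pl_z \ovl G = \tfrac{1}{2}\lap K$, a direct calculation yields $\pl_z B^\eps = -i\,r^\eps$, so $r^\eps = i\pl_z B^\eps$; the realness of $B^\eps$ then gives $\ovl{r^\eps} = -i\pl_{\bar z} B^\eps$. Substituting these into the mollified (CR), noting that $\pl_{\bar z} r^\eps = \tfrac{i}{4}\lap B^\eps$, and using the algebraic identity $\pl_z B^\eps \x \pl_{\bar z} B^\eps = \tfrac{i}{2}\,\pl_x B^\eps \x \pl_y B^\eps$ transforms it into the classical identity
\[
\lap B^\eps \;=\; 2\,\pl_x B^\eps \x \pl_y B^\eps \,+\, 4\gm\,\fv^\eps.
\]
The main obstacle is the limiting interpretation of $\pl_x B \x \pl_y B$: classically this product is undefined since $\pl_x B, \pl_y B \in E_{-\kp}$. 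This is resolved by the identity $\pl_x B^\eps \x \pl_y B^\eps = -2i\,r^\eps \x \ovl{r^\eps}$, valid at the mollified level, which identifies the ill-defined term with the already renormalized nonlinearity of (CR). The limit of $r^\eps \x \ovl{r^\eps}$ is controlled by the decomposition $r^\eps = R^\eps - \gm\xi^\eps$, Young products for the mixed terms, and the key estimate of Lemma \ref{zeta-lemma-1111} for the singular part $\xi^\eps \x \ovl{\xi^\eps}$; this furnishes a consistent limiting definition under which (B-L) holds.
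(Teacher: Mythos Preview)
Your proof is correct and follows essentially the same strategy as the paper: work at the mollified level where $B^\eps = 2i\,\ovl G \ast r^\eps$ is smooth, verify the classical identity $\lap B^\eps = 2\,\pl_x B^\eps \times \pl_y B^\eps + 4\gm\fv^\eps$ via $r^\eps = i\pl_z B^\eps$, and pass to the limit using the regularity gain of $\ovl G$.

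The one noteworthy difference is your argument for $\Im B = 0$. The paper observes directly from \rf{sCR} that $\Re(\pl_{\bar z} r^\eps) = 0$, i.e.\ $\pl_x(\Re r^\eps) = \pl_y(\Im r^\eps)$, and then computes $\Im(i\ovl G \ast r^\eps) = K\ast(\pl_x\Re r^\eps - \pl_y\Im r^\eps) = 0$. You instead exploit the structural fact (available in the fully zero-mean case via Corollary \ref{abc0}) that $r^\eps_k = iG\ast h^\eps_k$ with $h^\eps_k$ real, combined with the Fourier identity $\ovl G \ast G = -K$, to get $B^\eps_k = 2K\ast h^\eps_k$ real. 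Both arguments are short and correct; the paper's route avoids invoking Corollary \ref{abc0} and the zero-mean of $\w,\ww$ for this particular step, while your route gives the pleasant explicit formula $B^\eps_k = 2K\ast h^\eps_k$. Your closing discussion of how the ill-defined product $\pl_x B \times \pl_y B$ is interpreted via the already-renormalized $r\times\ovl r$ is more explicit than the paper's terse ``passing to the limit,'' and is a welcome clarification.
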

\begin{proof}
% Note that in \rf{sys_r1111}, $a=b=0$ by \rf{ab-1111}. We can also set $c=0$.
% Indeed, the role of $a,b$, and $c$ was to recover $\eta_1$ and $\eta_2$.
Note that since $G = 2\pl_z K$, by \rf{sys_r1111}, $\int_\Tor r^\eps = 0$.
It is straightforward to verify that if we set
\aaa{
\lb{rr-1111}
r^\eps = \frac12(\pl_y B^\eps + i\pl_x B^\eps) = i \pl_z B^\eps,
}
then, since $\pl_{\bar z} r^\eps = \frac{i}4 \lap B^\eps$
and $r^\eps \x \ovl{r^\eps} = \frac{i}2 (\pl_x B^\eps \x \pl_y B^\eps)$,
we obtain that $B^\eps$ satisfies the equation
\aa{
\lap B^\eps =2 \pl_x B^\eps \x \pl_y B^\eps + 4\gm\fv^\eps,
}
where $\fv^\eps = \fv\ast \rho_\eps$.
By \rf{rr-1111} and Lemma \ref{Lav1111}, 
$B^\eps = 2i\, \ovl G \ast r^\eps$ is a solution.
Passing to the limit as $\eps\to 0$, we obtain \rf{B-r}. Note that
since $r\in E_{-\kp}$ and $i\ovl G = 2i \pl_{\bar z} K$, 
$B\in E_{1-\kp}$. 

Finally,  by \rf{sCR}, $\Re (\pl_{\bar z} r^\eps) = 0$,
which is equivalent to $\pl_x p^\eps = \pl_y q^\eps$, where $p^\eps = \Re r^\eps$
and $q^\eps = \Im r^\eps$. On the other hand,
\aa{
\Im (i \ovl G \ast r^\eps) =   \pl_x K \ast p^\eps - \pl_y K \ast q^\eps
= K\ast (\pl_x p^\eps - \pl_y q^\eps) = 0
}
which proves that $\Im B=0$.
\end{proof}

In the appendices, we collect some important useful results.

\appendix

%\section{Appendix}
 
\section{On the Cauchy-Riemann operator}
\begin{lemma}
\lb{Lav1111}
Let $G$ be the Green function  defined by \rf{green-1111}.
If $f\in\C^\al(\Tor)$, $\al\in (0,1)$, $C\in\Cnu$ is  a constant and 
a function $r:\Tor \to \mathbb{C}$ satisfies 
%If
%\coma{assumptions about $f$}
\aaa{
\lb{rG1111}
r = G \ast f + C
}
then $r\in \C^{1+\al}(\Tor)$ and satisfies the identity 
\aa{
-2\pl_{\bar z} r = f - [f],
}
where $[f]: = (4\pi^2)^{-1}\int f$ denotes
the average over $\Tor$. %In particular, $r\in \C^{1+\al}(\Tor)$.
\end{lemma}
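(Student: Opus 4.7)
The plan is to reduce the identity $-2\pl_{\bar z} r = f - [f]$ to a distributional computation using the defining properties of the Green function $K$ of the Laplacian on $\Tor$, and then to upgrade the identity from $\mc D'(\Tor)$ to the classical Hölder setting via a Schauder-type estimate.

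First, I would recall that $K$, being the Green function for $-\lap$ on $\Tor$ with zero mean, satisfies
\[
-\lap K = \dl_0 - (4\pi^2)^{-1}
\]
in $\mc D'(\Tor)$. Using the Wirtinger factorization $\pl_{\bar z}\pl_z = \tfrac14 \lap$ together with the definition $G = 2\pl_z K$, I compute
\[
-2\pl_{\bar z} G \;=\; -4\pl_{\bar z}\pl_z K \;=\; -\lap K \;=\; \dl_0 - (4\pi^2)^{-1}
\]
in $\mc D'(\Tor)$.

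Next, I would apply this identity to $r = G\ast f + C$. Since $\pl_{\bar z}$ commutes with convolution, kills the constant $C$, and $\dl_0$ is the identity for $\ast$, I get
\[
-2\pl_{\bar z} r \;=\; (-2\pl_{\bar z} G)\ast f \;=\; \bigl(\dl_0 - (4\pi^2)^{-1}\bigr)\ast f \;=\; f - [f],
\]
using that the convolution of $f$ with the constant $(4\pi^2)^{-1}$ on $\Tor$ returns precisely the mean value $[f]$. This gives the desired identity, initially in the distributional sense.

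For the regularity claim $r\in\C^{1+\al}(\Tor)$, I would invoke the Schauder-type estimate for the Green function on the torus already available in the paper (Lemmata \ref{fh2222} and \ref{Gconv-1111}), by which convolution with $G$ raises the Hölder exponent by one. Applied to $f\in\C^\al(\Tor)$ with $\al\in(0,1)$, this yields $G\ast f\in\C^{1+\al}(\Tor)$, and hence $r = G\ast f + C\in\C^{1+\al}(\Tor)$; this in turn legitimizes reading the distributional identity above as a classical one. I do not expect any real obstacle: the only point of care is to perform all differentiations of convolutions in $\mc D'(\Tor)$ first and then promote the identity to the Hölder setting using the regularity gain of $G$.
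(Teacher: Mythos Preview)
Your proof is correct and follows essentially the same approach as the paper's: both use $\pl_{\bar z}\pl_z=\tfrac14\lap$ together with $G=2\pl_z K$ and the defining property of $K$, and both cite Lemmata~\ref{fh2222} and~\ref{Gconv-1111} for the regularity gain. The only cosmetic difference is that you work directly with the distributional identity $-\lap K=\dl_0-(4\pi^2)^{-1}$ and then convolve with $f$, whereas the paper first rewrites $G\ast f=G\ast(f-[f])$ and applies $-\lap(K\ast g)=g$ for mean-zero $g$; these are two phrasings of the same computation.
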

\begin{proof}
By Lemmata \ref{fh2222} and \ref{Gconv-1111}, $r\in \C^{1+\al}(\Tor)$.
Note that 
\begin{equation}\label{eqn-CR^2=Delta}
    \pl_z\pl_{\bar z} = \frac14 \lap \mbox{ on }\C^{2+\al}(\Tor). 
    \end{equation}
Thus, by  applying  the operator $2\pl_{\bar z}$ to both sides of equality 
\rf{rG1111} we infer that 
\aa{
2\pl_{\bar z} r = 2\pl_{\bar z} G \ast f
= 4   \pl_{\bar z} \pl_z K\ast  (f-[f])=
\lap (K \ast (f-[f])) = [f] - f.
}
To justify the last step, we observe that if $\int_\Tor g =0$, then
\begin{align}
-\lap u = g \quad  \iff \quad
u =K \ast g + C.
\end{align}
for some constant $C\in \Rnu$. 
\end{proof}
\section{On the white noise on $\Tr^d$, $d\gt 2$}
\begin{lemma}
\lb{noise-f-2222}
Let $\fv$ be a 3D Gaussian white noise, as in Definition
\ref{white-noise}.
If $\v^\eps: = \v_j \ast \rho_\eps$, $j=1,2,3$, are mollified white noises,
where $\rho_\eps$ is a standard mollifier, then,
\aaa{
\lb{v-noise-f-1111}
\v_j^\eps = \eta_j+
 \rho_\eps \ast\Re\Big[\sum_{k\in \Znu^d\setminus \{0\}} \eta_{kj} e^{i(k,x)}\Big].
}
\end{lemma}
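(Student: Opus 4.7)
The plan is to exploit the continuity of convolution with a fixed smooth mollifier as a linear operator on $\mc D'(\Tr^d)$, and then to observe that a constant is a fixed point of this operator. The identity \rf{v-noise-f-1111} then follows by distributing $\rho_\eps\ast(\cdot)$ over the two summands in the defining representation \rf{FS-2222}.

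First, I would write, by Definition \ref{white-noise},
\[
\v_j = \eta_j + S_j, \qquad S_j := \Re\Big[\sum_{k\in \Znu^d\setminus \{0\}} \eta_{kj} e^{i(k,\fdot)}\Big],
\]
where the series defining $S_j$ converges in $\mc D'(\Tr^d)$. The map $T_\eps: \mc D'(\Tr^d)\to \mc D'(\Tr^d)$, $u\mapsto \rho_\eps \ast u$, is linear and sequentially continuous. Applying it to both sides and using linearity of convolution gives
\[
\v_j^\eps = \rho_\eps \ast \eta_j + \rho_\eps \ast S_j.
\]
Continuity of $T_\eps$ is what allows us to pull $\rho_\eps\ast$ inside the series defining $S_j$ if one wished, but for the statement as written it suffices to keep the second summand unchanged.

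Next, the main (and only nontrivial) point is to show $\rho_\eps \ast \eta_j = \eta_j$. Since $\eta_j$ is a (random) real constant viewed as an element of $\mc D'(\Tr^d)$, and since $\rho_\eps$ is a standard mollifier with $\int_{\Tr^d} \rho_\eps(y)\,dy = 1$, a direct computation yields
\[
(\rho_\eps \ast \eta_j)(x) = \int_{\Tr^d} \rho_\eps(x-y)\, \eta_j \, dy = \eta_j \int_{\Tr^d} \rho_\eps(y)\, dy = \eta_j,
\]
for every $x\in \Tr^d$. Substituting this back yields \rf{v-noise-f-1111}.

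I do not anticipate any serious obstacle: the argument is essentially linearity of convolution plus the trivial identity $\rho_\eps\ast 1 = 1$. The only point deserving a word of care is the distributional convergence of the series in \rf{FS-2222}, which is already part of Definition \ref{white-noise} and guarantees that the decomposition $\v_j = \eta_j + S_j$ makes sense before applying $T_\eps$.
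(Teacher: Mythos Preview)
Your proof is correct and follows exactly the same approach as the paper, which simply says ``Applying the convolution with $\rho_\eps$ in \rf{FS-2222}, we obtain \rf{v-noise-f-1111}.'' You have merely made explicit the one point that the paper leaves implicit, namely $\rho_\eps\ast\eta_j=\eta_j$.
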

\begin{proof}
Applying the convolution with $\rho_\eps$ in \rf{FS-2222}, we obtain \rf{v-noise-f-1111}.
\end{proof}
\begin{lemma}%{lem}
\lb{pro1111}
Let $\v: \Om \to \ms D'(\Tr^d)$ be a Gaussian white noise.
Then, for any $\kp>0$, $\v \in \C^{-\frac{d}2-\kp}(\Tr^d)$ a.s.
Moreover, for any $\dl>0$, there exists $\La>0$ such that
\aa{
\PP(\|\v\|_{\C^{-\frac{d}2-\kp}(\Tr^d)}<\La)>1-\dl.
}
\end{lemma}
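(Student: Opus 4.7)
The plan is to prove the a.s.~Hölder-Zygmund regularity of $\v$ by controlling moments of $\v(\ffi^\la_x)$ uniformly over the parameters appearing in Definition \ref{HZ1111}, and then to deduce the tail bound via Markov's inequality. Throughout, $\al = -d/2-\kp$ and $r$ is the associated integer from Definition \ref{HZ1111}; the target is to bound
\[
|\v|_{\ms Z^\al} = \sup_{x\in\Tr^d,\,\la\in[0,1]_\D,\,\ffi\in\ms B^r} \frac{|\v(\ffi^\la_x)|}{\la^\al}.
\]

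The first step is a second-moment computation. Testing the Fourier series \rf{FS-2222} against $\ffi^\la_x$ and using that all $\eta_j$, $\eta_{kj}$ are independent standard Gaussians, one obtains
\[
\mathbb{E}|\v(\ffi^\la_x)|^2 \ls \sum_{k\in\Znu^d} |\widehat{\ffi^\la_x}(k)|^2 \ls \|\ffi^\la_x\|_{L^2(\Tr^d)}^2 \ls \la^{-d}\|\ffi\|_{L^2}^2 \ls \la^{-d},
\]
uniformly in $\ffi\in\ms B^r$. Since $\v(\ffi^\la_x)$ lives in the first Wiener chaos, Gaussian hypercontractivity (Nelson's inequality) upgrades this to $\mathbb{E}|\v(\ffi^\la_x)|^p \ls_p \la^{-pd/2}$ for every $p\ge 2$. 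Dividing by $\la^\al = \la^{-d/2-\kp}$ therefore gives the pointwise-in-parameters bound $\mathbb{E}\big(\la^{-\al}|\v(\ffi^\la_x)|\big)^p \ls_p \la^{p\kp}$, which is summable over dyadic scales.

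The main obstacle is passing from this pointwise-in-$(x,\la,\ffi)$ moment bound to a bound on the \emph{supremum} over the infinite-dimensional index set. My plan is a Kolmogorov-type chaining: fix a dyadic scale $\la = 2^{-n}$, cover $\Tr^d$ by a net of cardinality $\ls 2^{nd}$ and, using $\ms B^r \hookrightarrow C^r$ together with Arzelà--Ascoli, choose a finite $2^{-n}$-net $\mc N_n$ in $\ms B^r$ whose cardinality grows only polynomially in $2^n$. The pairwise bounds $\mathbb{E}|\v(\ffi^\la_x) - \v(\psi^\la_y)|^p$ are estimated by the $L^2$-difference of the test functions, which is controlled by $(|x-y|+\|\ffi-\psi\|_{C^r})^{\text{const}}\la^{-d/2}$, allowing the difference terms along the chain to be summed. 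Combining with a union bound over $\mc N_n$ at each scale and taking $p$ large enough that $2^{nd}\cdot 2^{-np\kp/2}$ is summable in $n$, Borel--Cantelli yields that the supremum defining $|\v|_{\ms Z^\al}$ is a.s.~finite and, crucially, has finite $L^p(\Omega)$-norm.

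Once $\mathbb{E}\|\v\|_{\C^{-d/2-\kp}(\Tr^d)}^p < \infty$ is established, the probability statement follows by Markov: given $\dl>0$, choose $\La$ with $\La^{-p}\mathbb{E}\|\v\|_{\C^{-d/2-\kp}(\Tr^d)}^p < \dl$. The hardest step is definitely the chaining/covering argument in the third paragraph, since one must control the metric entropy of $\ms B^r$ in a way compatible with the Gaussian moment bound; a cleaner alternative would be to invoke the Littlewood--Paley/Besov characterization of $\ms Z^\al$ together with $\mathbb{E}\|\Delta_j \v\|_{L^\infty}^p \ls 2^{jpd/2}$ obtained from hypercontractivity applied to each Paley block, which bypasses the explicit chaining.
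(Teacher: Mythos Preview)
Your chaining argument contains a real gap: the claim that $\ms B^r$ admits a $2^{-n}$-net of cardinality polynomial in $2^n$ is false. The set $\ms B^r$ is (essentially) the unit ball of $C^r(\Tr^d)$, and its metric entropy in $L^2$ (the metric relevant for your increment bound $\mathbb{E}|\v(\ffi^\la_x)-\v(\psi^\la_y)|^2 \ls \|\ffi^\la_x-\psi^\la_y\|_{L^2}^2$) satisfies $\log N(\eps)\sim \eps^{-d/r}$, so $N(2^{-n})$ is exponential in $2^n$, not polynomial. A union bound over such a net swamps the gain $2^{-np\kp/2}$ no matter how large $p$ is. Arzel\`a--Ascoli gives precompactness, not a quantitative entropy bound of the type you need. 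Your own suggested alternative, the Littlewood--Paley/Besov characterization, avoids this entirely because the supremum over $\ffi\in\ms B^r$ is replaced by a countable supremum over dyadic blocks; that route does work and is essentially what the reference \cite{m.veraar2011} does.

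For comparison, the paper's proof is far shorter. The regularity statement $\v\in\C^{-d/2-\kp}$ a.s.\ is simply cited from the literature (\cite{bcz2023-2}, \cite{m.veraar2011}). The tail bound is then obtained not via Markov and moments but by the elementary observation that any a.s.\ finite random variable $X$ satisfies $\PP(|X|<N)\uparrow 1$ as $N\to\infty$ (continuity of measure applied to the increasing sets $A_N=\{\|\v\|_{\C^{-d/2-\kp}}<N\}$). Your Markov route is more quantitative and would yield explicit tail decay, but it requires establishing $\mathbb{E}\|\v\|_{\C^{-d/2-\kp}}^p<\infty$, which in turn needs the full regularity argument you sketched; the paper's route needs nothing beyond the qualitative a.s.\ finiteness already asserted in the first part.
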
%{lem}
\begin{proof}
The first part of the Lemma is known, see e.g., \cite{bcz2023-2}  or \cite{m.veraar2011}, according to which,   for any $\kp>0$,
\aa{
\PP\big\{\v \in  \C^{-\frac{d}2-\kp}(\Tr^d)\big\} = 1.
}
The second part trivially follows from the first but since we use it, we show a  proof. 
\aa{
\big\{\v \in  \C^{-\frac{d}2-\kp}(\Tr^d)\big\}  = \bigcup_{N=1}^\infty A_N,
\quad \text{where} \quad
A_N = \big\{\|\v\|_{\C^{-\frac{d}2-\kp}(\Tr^d)} < N\big\}.
}
Since $A_N \sub A_{N+1}$, by the continuity of the measure $\PP$,
\aa{
\lim_{N\to \infty} \PP\big\{\|\v\|_{\C^{-\frac{d}2-\kp}(\Tr^d)} < N\big\} = 1.
}
\end{proof}
\begin{lemma}%{lem}
\lb{wn1111}
Let $\v: \Om \to \ms D'(\Tr^d)$ be a Gaussian white noise.
Then, for any $\kp > 0$, $\v \in \Zs^{-\frac{d}2-\kp}(\Tr^d)$ a.s.
Furthermore, for any  $\eps>0$
% \aaa{
% \lb{est1111}
% |\v^\eps - \v|_{-1-\kp} \ls \eps^{\frac{\kp}2} |\v|_{-1-\frac{\kp}2},
%  \quad \text{a.s.}
% }
and $\kp>\kap>0$, 
\aaa{
\lb{est1111}
|\v^\eps - \v|_{-\frac{d}2-\kp} \ls \eps^{\kp-\kap}
|\v|_{-\frac{d}2-\kap} \, ,
 \quad \text{a.s.}
}
\end{lemma}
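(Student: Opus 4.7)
The plan is to derive the quantitative mollification bound \eqref{est1111} first, and then to deduce $\v \in \Zs^{-\frac{d}{2}-\kp}(\Tr^d)$ from it. Granted \eqref{est1111}, for any given $\kp>0$ I pick some $\kap \in (0,\kp)$; Lemma \ref{pro1111} yields $\v \in \ms Z^{-\frac{d}{2}-\kap}(\Tr^d)$ almost surely, and \eqref{est1111} guarantees that $\v^\eps \to \v$ in $\ms Z^{-\frac{d}{2}-\kp}(\Tr^d)$. Since each $\v^\eps$ lies in $C^\infty(\Tr^d)$ and $\Zs^{-\frac{d}{2}-\kp}(\Tr^d)$ is by definition the closure of $C^\infty(\Tr^d)$ in $\ms Z^{-\frac{d}{2}-\kp}(\Tr^d)$, this gives $\v \in \Zs^{-\frac{d}{2}-\kp}(\Tr^d)$ a.s.

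For the quantitative estimate I would work directly from the test-function characterization of $|\cdot|_{-\frac{d}{2}-\kp}$ with $r := [\frac{d}{2}+\kp]+1$. Fix $\ffi \in \ms B^r$, $x \in \Tr^d$, and $\la \in (0,1]_\D$. Evenness of the standard mollifier gives $\v^\eps(\ffi) = \v(\rho_\eps \ast \ffi)$, and the scaling identity $\rho_\eps \ast \ffi^\la_x = (\rho_{\eps/\la} \ast \ffi)^\la_x$ implies
\[
(\v^\eps - \v)(\ffi^\la_x) \;=\; \v\bigl((\widetilde{\ffi} - \ffi)^\la_x\bigr), \qquad \widetilde{\ffi} := \rho_{\eps/\la} \ast \ffi.
\]
I would then split into the regimes $\la \geq \eps$ and $\la < \eps$. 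In the first regime, with $\mu := \eps/\la \in (0,1]$, a Taylor expansion of $\ffi$ combined with the vanishing of odd moments of the symmetric $\rho_\mu$ produces $\|\partial^k(\widetilde{\ffi} - \ffi)\|_\infty \ls \mu^\theta$ for $|k|$ up to some $r' < r$, with $\theta = 1$ from a first-order expansion, $\theta = 2$ from a second-order expansion using vanishing of first moments, and so on. Choosing $\theta \geq \kp - \kap$, one can write $\widetilde{\ffi} - \ffi = C\mu^\theta \chi$ with $\chi \in \ms B^{r'}$, and the $\ms Z^{-\frac{d}{2}-\kap}$-bound on $\v$ then yields
\[
\la^{\frac{d}{2}+\kp} \bigl|\v\bigl((\widetilde{\ffi} - \ffi)^\la_x\bigr)\bigr| \;\ls\; \mu^\theta \la^{\kp - \kap} |\v|_{-\frac{d}{2}-\kap} \;\leq\; \eps^{\kp - \kap} |\v|_{-\frac{d}{2}-\kap}.
\]
In the second regime, convolution with the $L^1$-normalized $\rho_{\eps/\la}$ does not increase sup-norms of derivatives of $\ffi$, so $\widetilde{\ffi} \in \ms B^r$; splitting the two terms and applying the $\ms Z^{-\frac{d}{2}-\kap}$-bound to each yields
\[
\la^{\frac{d}{2}+\kp} \bigl|\v\bigl((\widetilde{\ffi} - \ffi)^\la_x\bigr)\bigr| \;\ls\; \la^{\kp-\kap} |\v|_{-\frac{d}{2}-\kap} \;\leq\; \eps^{\kp-\kap} |\v|_{-\frac{d}{2}-\kap},
\]
where the last inequality uses $\la < \eps$. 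Taking the supremum over $\ffi, x, \la$ produces \eqref{est1111}.

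The main technical point will be ensuring that the Taylor-type gain $\mu^\theta$ in the first regime is sharp enough to cover arbitrary admissible pairs $\kp > \kap > 0$: a crude first-order expansion yields $\theta = 1$, which suffices whenever $\kp - \kap \leq 1$, while larger differences require exploiting successive vanishing moments of the standard radial mollifier (giving $\theta = 2, 3, \dots$) or, equivalently, iterating the estimate through a finite chain of intermediate regularities. Once this bookkeeping is in place the remainder is a routine scaling computation; the substantive ingredients are the convolution-rescaling identity $\rho_\eps \ast \ffi^\la_x = (\rho_{\eps/\la} \ast \ffi)^\la_x$ and the two-regime dichotomy based on the sign of $\la - \eps$.
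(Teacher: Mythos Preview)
Your argument is correct and is a close variant of the paper's. Both proofs transfer the mollification onto the test function and then exploit one extra derivative of $\ffi$ to extract the factor $\eps^{\kp-\kap}$; the difference is only in the packaging. You use the rescaling identity $\rho_\eps\ast\ffi^\la_x=(\rho_{\eps/\la}\ast\ffi)^\la_x$ and split on the sign of $\la-\eps$, whereas the paper writes $\v^\eps(\ffi^\la_x)=\int\rho(z)\,\v(\ffi^\la_{x+\eps z})\,dz$, introduces the auxiliary function $\psi(y;w)=\big(\ffi(y-w)-\ffi(y)\big)/|w|^{\kp-\kap}$ with $w=\eps z/\la$, and splits on $|w|\lessgtr1$; since $|z|\le 1$ on $\supp\rho$, the two dichotomies agree up to the $z$-integration. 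Your explicit deduction of $\v\in\Zs^{-\frac{d}{2}-\kp}$ from \eqref{est1111} fills in a step the paper leaves to the reader.

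The caveat you flag about large $\kp-\kap$ applies equally to the paper's proof: its bound $\|\partial^k_1\psi(\cdot;w)\|_\infty\lesssim\|D^{|k|+1}\ffi\|_\infty$ for $|w|\le 1$ also rests on a first-order increment and hence tacitly needs $\kp-\kap\le 1$. In the paper's applications one always has $\kp-\kap<\tfrac12$, so neither argument requires the refinement. Do note, though, that a generic radial mollifier has only its odd moments vanishing, so the $\theta=3,4,\dots$ you allude to would require a special choice of $\rho$, and the ``chain of intermediate regularities'' idea does not directly recover the full power $\eps^{\kp-\kap}$; for $\kp-\kap>2$ the stated bound is in fact not expected to hold for an arbitrary standard mollifier.
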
%{lem}
%For the proof of Lemma \ref{wn1111}, we need the following proposition.
\begin{proof} 
 Let us choose and fix $\kp>\kap>0$ and
let $\ell = [\frac{d}2+\kp +1]$, where $[\fdot]$ denotes
the integer part.
By Lemma \ref{pro1111},
$\v \in \ms{Z}^{-\frac{d}2-\kap} \cap \ms{Z}^{-\frac{d}2-\kp}$. 
By Proposition \ref{eq1111}, one can compute 
$|\v^\eps - \v|_{\ms Z^{-\frac{d}2-\kp}}$
by taking the supremum over  $\ffi\in \ms B^{\ell+1}$. 
First, we note that
\aa{
\v^\eps(\ffi^\la_x) = \v(\rho_\eps \ast \ffi^\la_x)
= \v\Big(\int \rho(z)\ffi^\la_x(\fdot - \eps z)dz\Big) = 
\int \rho(z)\v(\ffi^\la_{x+\eps z}) dz,
}
where $\rho$ is a standard mollifier. Define
$\psi: \Tr^d \x \Rnu^d \to \Rnu$,
\aa{
\psi(y;w): = \frac{\ffi(y - w) - \ffi(y)}{|w|^{\kp-\kap}}.
}
We have
\aaa{
|\v^\eps - \v|_{\ms Z^{-\frac{d}2-\kp}} 
\ls &\sup_{\substack{x\in\Tr^d\!,\, \la\in [0,1]_\D, \\ \ffi\in \ms B^{\ell+1}}}
\frac{|(\v^\eps - \v)(\ffi^\la_x)|}{\la^{-\frac{d}2-\kp}}\notag\\
\lt &\int \rho(z)  dz
 \sup_{\substack{x\in\Tr^d\!,\, \la\in [0,1]_\D, \\ \ffi\in \ms B^{\ell+1}}}
 \frac{|\v(\ffi^\la_{x+\eps z} - \ffi^\la_x)|}{\la^{-\frac{d}2-\kp}}\notag\\
\lt &\,\eps^{\kp-\kap} \int \rho(z)|z|^{\kp-\kap}  dz 
 \sup_{\substack{x\in\Tr^d\!,\, \la\in [0,1]_\D, \\ \ffi\in \ms B^{\ell+1}}}
 \frac{|\v\big(\psi^\la_x(\fdot; \frac{\eps z}\la)\big)|}{\la^{-\frac{d}2-\kap}} \notag\\
 \ls &\,\eps^{\kp-\kap}  
 \Big[\,
 \sup_{\substack{x\in\Tr^d\!,\, \la\in [0,1]_\D, \\ \ffi\in \ms B^{\ell+1}\!,
 \, |w|\lt 1}}
 \frac{|\v(\psi^\la_x(\fdot; w))|}{\la^{-\frac{d}2-\kap}}
 +
  \sup_{\substack{x\in\Tr^d\!,\, \la\in [0,1]_\D, \\ \ffi\in \ms B^{\ell+1}
  \!,\, |w| > 1}}
 \frac{|\v(\psi^\la_x(\fdot; w))|}{\la^{-\frac{d}2-\kap}}\,
  \Big].
  \lb{bound1111}
}
Since for $\ffi\in\ms B^{\ell+1}$, $\|D^l\ffi\|_\infty\lt 1$, $|l| = 1,2 \ldots \ell+1$, we have the following bound uniformly in $|w|\lt 1$:
\aaa{
\lb{bb-3333}
%\Big\|\psi\Big(\fdot; \frac{\eps z}\la\Big)\Big\|_\infty \ls 1 \quad \text{and} \quad
\big\|\pl^k_1 \psi(\fdot; w)\big\|_\infty \ls
 \|D^l\ffi\|_\infty \ls 1, \quad |l| = |k|+1, \quad |k| = 0,1,\ldots \ell,
}
where by $\pl^k_1$, we understand a partial derivative of order $|k|$ with respect to
the first argument of $\psi$ and by $D^l$ we understand a derivative of order $|l|$
of $\ffi$. 
By \rf{bb-3333}, 
the first supremum in the line \rf{bound1111} is bounded by
\aa{
\sup_{\substack{x\in\Tr^d\!,\, \la\in [0,1]_\D, \\ \psi\in \ms B^\ell }}
\frac{|\v(\psi^\la_x)|}{\la^{-\frac{d}2-\kap}}
\ls |\v|_{\ms Z^{-\frac{d}2-\kap}},
}
where the last bound is implied by Proposition \ref{eq1111},
as $\ell\gt [\frac{d}2+\kap+1]$.
The second supremum in the line \rf{bound1111} is bounded by
\aa{
\sup_{\substack{x\in\Tr^d\!,\, \la\in [0,1]_\D, 
\\ \ffi\in \ms B^{\ell+1} \!,\, |w| > 1}}
 \frac{|\v((\ffi_{-w})^\la_x)|}{\la^{-\frac{d}2-\kap}}
 + \sup_{\substack{x\in\Tr^d\!,\, \la\in [0,1]_\D, \\ \ffi\in \ms B^{\ell+1}}}
 \frac{|\v(\ffi^\la_x)|}{\la^{-\frac{d}2-\kap}} \ls
 |\v|_{\ms Z^{-\frac{d}2-\kap}}.
}
This immediately implies \rf{est1111}.
%\aa{
%|\w^\eps - \w|_{-1-\kp} \ls \eps^{\frac{\kp}2} |\w|_{-1-\frac{\kp}2}.
%}
\end{proof}

\section{On H\"older-Zygmund spaces and norms}
\lb{secHZ-1111}
In  \cite[Definition A.9]{bcz2023-1}, the sets  $\ms B^r$, $\ms B_\al$,
and $\ms B^r_\al$ were defined in a different manner. Here we would like
to show the equivalence of the corresponding  H\"older-Zygmund norms.
%To do so, we introduce
%the following sets of test functions. 
\begin{remark}
\lb{remark-ball-1111}
Let $B_1(0)\sub\Rnu^d$ be an open ball of radius 1 centered at 
the origin.
If $\ffi\in \ms D(B_1(0))$, then $\ffi_0^\la\in \ms D(B_1(0))$ for $\la\in (0,1)$.
Since $B_1(0)\sub [-\pi,\pi]^d$, both $\ffi$ and  $\ffi_0^\la$
can be extended $2\pi$-periodically, in both variables, from $[-\pi,\pi]^d$ to $\Rnu^d$. 
This way, $\ffi_y^\la = \ffi_0^\la(\fdot - y)$,  $\la\in (0,1]$,  can be regarded as a function from $\ms D(\Tr^d)$. 
\end{remark}
% \begin{remark}
% \lb{remark-ball-2222}
% Let $B_1(x): = B_1(0) +x$, $x\in\Rnu^d$.
% If $\ffi\in  \ms D(B_1(x))$, then $\ffi = \psi(\cdot+x)$
% for some function $\psi\in \ms D(B_1(0))$ (namely, $\psi = \ffi(\cdot -x)$).
% Since, by Remark \ref{remark-ball-1111}, 
% $\psi$ can be regarded as function from $\ms D(\Tr^d)$,
% so can be $\ffi$.
% \end{remark}

\begin{definition}
\lb{def-h-1111}
For $r\in\Nnu \cup \{0\}$ and $\al\in\Rnu$,  we define
\aa{
&\td {\ms B}^r: = \big\{\ffi\in\mc D(B_1(0)): \|\pl^k\ffi\|_\infty\lt 1, \; 
\text{for all}\; 0\lt |k| \lt r\big\},\\
%\label{eqn-B_a}
&\td {\ms B}_\al: = \big\{\ffi\in\mc D(B_1(0)):
\int_{\Tr^d} \ffi(z) z^k dz = 0, \; \text{for all}\; 0\lt |k| \lt \al\big\},
\, \mbox{ if } \al \gt 0,\\
%\label{eqn-B^r_a}
&\td {\ms B}^r_\al: = \td {\ms B}^r \cap \td {\ms B}_\al,
\, \mbox{ if } \al \gt 0,
}
provided that $\ms D(B_1(0))\sub \ms D(\Tr^d)$
in the sense of Remark \ref{remark-ball-1111}.
\end{definition}
\begin{definition}
\lb{C-3}
Define the H\"older-Zygmund norms 
% defined with respect to the family
% of test functions from the sets $\ms B^{[-\al+1]}(x)$ ($\al<0$) 
% and $\ms B^0_\al(x)$ ($\al\gt 0$), we denote by 
$|\cdot|_{\ms Z^\al\!,\, x}$ as follows.
If $ \al\gt 0$, then
\aaa{
\lb{eqn-norm-a,x-1}
&|f|_{\ms Z^\al\!,\, x}:=
\sup_{y\in\Tr^d\!, \, \psi_x \in \td{\ms B}^0} |f(\psi_y)|
+ \sup_{\substack{y\in\Tr^d\!, \,\la\in (0,1],\\ \ffi_x\in 
\td{\ms B}^0_\al}}\frac{|f(\ffi^\la_y)|}{\la^\al}.}
If $ \al< 0$, then, with $r=[-\al+1]$,  
\aa{
|f|_{\ms Z^\al\!,\, x}
:=  \sup_{\substack{y\in\Tr^d,\, \la\in (0,1],\\ \ffi_x\in 
\td{\ms B}^r}}\frac{|f(\ffi^\la_{y})|}{\la^\al}.
}
We define the H\"older-Zygmund space $\td {\ms Z^\al}(\Tr^d)$ 
as a space of distributions $f\in \ms D'(\Tr^d)$  for which
$|f|_{\ms Z^\al\!,\, 0} < \infty$.
For simplicity of notation, we set
\aa{
|f|_{\td{\ms Z}^\al}:= |f|_{\ms Z^\al\!,\, 0}.
}
\end{definition}
\begin{remark}
\rm
\lb{c5}
Remark that $|\cdot|_{\td{\ms Z}^\al}$  is the same norm as in 
\cite[Definition A.9]{bcz2023-1}.
\end{remark}
\begin{remark}
\lb{hz-seminorm}
\rm
For $\al\in\Rnu$, the H\"older-Zygmund seminorm $|\cdot|_{\al, \mcc K}$
is defined  in exactly the same way as the norm 
$|\cdot|_{\td{\ms Z}^\al}$, see Definition
\ref{C-3}, but with $\Tr^d$ replaced with a compact $\mcc K\sub\Rnu^d$.
For the precise definition of  $|\cdot|_{\al, \mcc K}$ and the 
H\"older-Zygmund space $\ms Z^\al(\Rnu^d)$,
we refer to \cite[Definition 2.1]{bcz2023-1}.
\end{remark}
\begin{remark}
\lb{intersection-1111}
\rm
Note that $\td{\ms Z}^\al(\Tr^d) = \ms Z^\al(\Rnu^d)\cap \ms D'(\Tr^d)$.
Indeed, the norm $|\cdot|_{\td{\ms Z}^\al}$ is the restriction 
of the  seminorm $|\cdot|_{\al, \Tr^d}$ to periodic distributions. 
Conversely, each seminorm $|\cdot|_{\al, \mcc K}$, where
$\mcc K\sub\Rnu^d$ is a compact, can be bounded by  $|\cdot|_{\al, \Tr^d}$
due to the periodicity of the distributions.
\end{remark}
Let $\al\notin \Nnu$. The following lemma is about the equivalence
of the norms $|\cdot|_{\td{\ms Z}^\al}$ and $|\cdot|_{\ms Z^\al}$, 
where the latter
is the norm introduced in Definition \ref{HZ1111}. 
\begin{lemma}
\lb{h-e-1111}
For every $\al\in\Rnu$, $\al\notin \Nnu$, $\ms Z^\al=\td{\ms Z}^\al$ 
and the norms 
 $|\cdot|_{\td{\ms Z}^\al}$ and $|\cdot|_{\ms Z^\al}$ are equivalent.
\end{lemma}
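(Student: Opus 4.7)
My plan is to prove the equivalence by establishing both inequalities $|\cdot|_{\td{\ms Z}^\al} \ls |\cdot|_{\ms Z^\al}$ and $|\cdot|_{\ms Z^\al} \ls |\cdot|_{\td{\ms Z}^\al}$ separately. The two definitions differ in two respects: (a) the test function classes used, namely $\ms B^r, \ms B^r_\al$ (supported on all of $\Tr^d$) versus $\td{\ms B}^r, \td{\ms B}^r_\al$ (compactly supported in $B_1(0)$); and (b) the range of the scaling parameter, continuous $\la\in(0,1]$ in $|\cdot|_{\td{\ms Z}^\al}$ versus dyadic $\la\in[0,1]_\D$ in $|\cdot|_{\ms Z^\al}$.

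For the inequality $|f|_{\td{\ms Z}^\al} \ls |f|_{\ms Z^\al}$, the first observation is that any $\ffi \in \td{\ms B}^r$ (respectively $\td{\ms B}^r_\al$), extended by zero outside $B_1(0)$ and then periodically as in Remark \ref{remark-ball-1111}, belongs to $\ms B^r$ (respectively $\ms B^r_\al$): both derivative bounds and moment vanishing conditions transfer automatically. To handle the dyadic-versus-continuous scale issue, for any $\la \in (0,1]$ I would write $\la = 2^{-n}\mu$ with $\mu \in (1/2, 1]$ and $n \in \Nnu_0$, and verify by direct computation that $\ffi^\la_x = \tilde\ffi^{2^{-n}}_x$ where $\tilde\ffi(z) := \mu^{-d} \ffi(z/\mu)$. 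The function $\tilde\ffi$ remains supported in $B_1(0)$ (since $\mu \lt 1$) and its derivatives obey $\|\pl^k\tilde\ffi\|_\infty \lt \mu^{-d-|k|} \lt 2^{d+r}$, so after normalization by a constant $C=C(d,r)$, it lies in $\ms B^r$. Combining this with the two-sided bound $\la^{-\al}\simeq (2^{-n})^{-\al}$ yields the desired inequality.

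For the reverse inequality $|f|_{\ms Z^\al} \ls |f|_{\td{\ms Z}^\al}$, the main challenge is that test functions in $\ms B^r$ may be supported on all of $\Tr^d$ rather than be concentrated in $B_1(0)$. My approach is to fix, once and for all, a smooth partition of unity $\{\chi_i\}_{i=1}^M$ on $\Tr^d$ subordinate to a finite cover by balls of radius $\tfrac{1}{2}$, and for any $\ffi\in\ms B^r$ to decompose $\ffi = \sum_{i=1}^M \chi_i\ffi$, each piece being supported in $B_{1/2}(x_i)$. After translation to the origin, each $\chi_i\ffi$ lies in $C'\cdot\td{\ms B}^r$ for a constant $C'$ depending on $\|\chi_i\|_{C^r}$ and $M$. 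Summing the $M$ contributions bounds $|f|_{\ms Z^\al}$ in terms of $|f|_{\td{\ms Z}^\al}$; the dyadic versus continuous scaling poses no issue in this direction since $[0,1]_\D \sub (0,1]$.

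The main obstacle I anticipate is preserving the moment vanishing for the case $\al > 0$, since multiplying by a cutoff $\chi_i$ destroys the property that moments of order $\lt\al$ vanish. I expect to address this by subtracting from each $\chi_i\ffi$ a suitable polynomial correction of degree at most $\lfloor\al\rfloor$ in order to restore the moment conditions, and by showing that the correction term contributes an error controllable by $|f|_{\td{\ms Z}^\al}$; this is where the hypothesis $\al \notin \Nnu$ becomes essential, as it prevents the Hölder exponent from coinciding with the degree of the corrective polynomial. As a backup route, one could instead reduce both norms to the classical Besov norm $B^\al_{\infty,\infty}(\Tr^d)$ via a Littlewood--Paley characterization, which for $\al \notin \Nnu$ is known to coincide with $\C^\al(\Tr^d)$; this would circumvent the moment-matching issue at the price of introducing additional harmonic analysis machinery.
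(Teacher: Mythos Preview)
Your proposal is essentially correct and, for the direction $|f|_{\td{\ms Z}^\al}\ls |f|_{\ms Z^\al}$ as well as for $|f|_{\ms Z^\al}\ls |f|_{\td{\ms Z}^\al}$ when $\al<0$, it matches the paper's proof almost verbatim (dyadic rounding via rescaling, and a finite partition of unity on $\Tr^d$).

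The genuine difference is in the case $\al\ge 0$ for the bound $|f|_{\ms Z^\al}\ls |f|_{\td{\ms Z}^\al}$. You propose to restore the vanishing moments of the localized pieces $\chi_i\ffi$ by subtracting polynomial corrections on the test-function side; this can be made to work but is somewhat laborious. The paper instead avoids touching the test functions altogether: since $\ffi\in\ms B^0_\al$ already annihilates polynomials of degree $\le n=[\al]$, one may write
\[
f(\ffi^\la_x)=\bigl(f(x+\la\,\cdot)-T_{n,x}(\la\,\cdot)\bigr)(\ffi),
\]
where $T_{n,x}$ is the degree-$n$ Taylor polynomial of $f(x+\cdot)$ at $0$, and then bound the Lagrange remainder by $\la^\al\sum_{|\beta|=n}[D^\beta f]_{\al-n}\ls \la^\al\|f\|_{\C^\al}$. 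The step $\|f\|_{\C^\al}\ls |f|_{\td{\ms Z}^\al}$ is the standard identification $\td{\ms Z}^\al=\C^\al$ for $\al\notin\Nnu$ (Proposition~14.15 in \cite{FH20}), which is precisely your ``backup route'' without the Littlewood--Paley detour. So the paper's argument is closer to your backup than to your primary plan, and is noticeably shorter: it exploits the moment condition to subtract a polynomial from $f$ rather than to repair the test function.
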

\begin{proof}
First, we show that $|\cdot|_{\td{\ms Z}^\al}$ and 
$|\cdot|_{\ms Z^\al\!,\, x}$ is the same norm.
Indeed, let $\al<0$ and $r=[-\al+1]$. We have
\aa{
|f|_{\ms Z^\al\!,\, x} =
  \sup_{\substack{y\in\Tr^d,\, \la\in (0,1],\\ \ffi_x\in 
  \td{\ms B}^r}}\frac{|f(\ffi^\la_y)|}{\la^\al}=
  \sup_{\substack{y\in\Tr^d,\, \la\in (0,1],\\ \ffi\in 
  \td{\ms B}^r}}\frac{|f(\ffi^\la_{y+x})|}{\la^\al}=
  |f|_{\td{\ms Z}^\al}.
}
The latter identity holds by the $2\pi$-periodicity of $\ffi_{y+x}$ in both variables.
For $\al\gt 0$, the proof is the same.

Next, let us choose and fix a finite cover of $\Tr^d$
by open balls  $\{B_{x_k}(1)\}$. Furthermore, we choose a finite smooth partition of unity 
$\{\chi_k\}$ of $\Tr^d$ subordinated to the cover  $\{B_{x_k}(1)\}$.
Let $f\in \td{\ms Z^\al}$, $\al<0$, and $r=[-\al+1]$. Then
\eqn{
\lb{same}
 |f|_{\ms Z^\al}
 = &\, \sup_{\substack{x\in\Tr^d,\, \la\in [0,1]_\D,\\ 
 \ffi\in \ms B^r}}\frac{|f(\ffi^\la_{x})|}{\la^\al}
 \lt \sum_k \sup_{\substack{x\in\Tr^d,\, \la\in [0,1]_\D,\\ \ffi\in \ms B^r}}
 \frac{|f\big((\ffi \chi_k)^\la_{x}\big)|}{\la^\al} \\
 \ls &\, \sum_k \sup_{\substack{x\in\Tr^d,\, \la\in (0,1],\\ 
 \psi_{x_k}\in \td{\ms B}^r}}
 \frac{|f\big(\psi^\la_{x}\big)|}{\la^\al} 
 = \sum_k |f|_{\ms Z^\al\!,\,x_k}
 \ls |f|_{\td{\ms Z}^\al}.
 }
 Now let $\al\gt 0$, 
 $\al\notin \Nnu$, $n=[\al]$,  and, for a fixed $x\in\Rnu^d$, let $T_{n,x}$ be the Taylor 
polynomial of degree $n$, based at 0, for the function $f(x+\fdot)$. 
Since, the test function $\ffi$ in \rf{HZ1}
annihilates monomials  of order up to $n$, Taylor's formula 
(with Lagrange remainder)  implies
%By Definition \ref{HZ1111},
\eqn{
\lb{taylor-1111}
 \sup_{\substack{x\in\Tr^d\!, \,\la\in [0,1]_\D,\\ \ffi\in \ms B^0_\al}}\frac{|f(\ffi^\la_x)|}{\la^\al}
 = &\, \sup_{\substack{x\in\Tr^d\!, \,\la\in [0,1]_\D,\\ \ffi\in \ms B^0_\al}}
 \frac{|(f(x+\la \fdot)- T_{n,x}(\la \fdot))(\ffi)|}{\la^\al}\\
 \ls &\, \sum_{|\beta| = n}\, [D^\beta f]_\al 
 \ls \, \|f\|_{\C^\al(\Tr^d)} \ls |f|_{\td{\ms Z}^\al},
}
where $\beta$ is a multiindex and $D^\beta$ is a partial
derivative of order $|\beta|$.
 The evaluation of the first term in \rf{HZ1} can be done by the same argument
 as in \rf{same}. Thus, we have proved that
 \aa{
   |f|_{\ms Z^\al} \ls |f|_{\td{\ms Z}^\al}.
 }
 Let us prove the inverse inequality. Let $f\in \ms Z^\al$,
 $\la\in (0,1]$, and let $\la_{n+1}=2^{-(n+1)}$, $\la_n = 2^{-n}$,
 $n\in \Nnu_0$, be two consecutive points from $[0,1]_\D$
 with the property
 \aa{
 \la_{n+1} < \la \lt \la_n.
 }
 Note that $\frac{\la_n}{\la}< \frac{\la_n}{\la_{n+1}} = 2$.

 Let $\al<0$, $r=[-\al+1]$, and $\ffi\in \td{\ms B}^r$. We have
 \aa{
 \frac{|f(\ffi^\la_{x})|}{\la^\al} \lt
 \Big(\frac{\la_n}{\la}\Big)^{\!d}\,
 \frac{|f(\psi^{\la_n}_x)|}{\la_n^\al} 
 < 2^d \, \frac{|f(\psi^{\la_n}_x)|}{\la_n^\al}, 
 \quad \text{where} \;\; \psi(y) = \ffi\Big(\frac{\la_n}{\la} y\Big).
 }
 Note that since $1\lt \frac{\la_n}{\la}<2$, 
 \aa{
 \supp \psi \sub B_1(0) \sub [-\pi,\pi]^d \quad \text{and} \quad
 \|D^k \psi\|_\infty \lt 2^{|k|} \|D^k \ffi\|_\infty \lt 2^r.
 }
 Therefore,
 \aaa{
 \lb{ls-1111}
 |f|_{\td{\ms Z}^\al} \ls  |f|_{\ms Z^\al}.
 }
 Now let $\al\gt 0$ and $\ffi \in \td{\ms B}^0_\al$.
 We have
 \aa{
 \frac{|f(\ffi^\la_{x})|}{\la^\al} \lt
 \Big(\frac{\la_{n+1}}{\la}\Big)^{\!d}\,
 \frac{|f(\psi^{\la_{n+1}}_x)|}{\la_{n+1}^\al} 
 <   \frac{|f(\psi^{\la_{n+1}}_x)|}{\la_{n+1}^\al}, 
 \qquad \text{where} \;\; \psi(y) = \ffi\Big(\frac{\la_{n+1}}{\la} y\Big).
 }
 Note that since $\frac12 < \frac{\la_{n+1}}{\la}\lt 1$, 
 \aa{
 \supp \psi \sub B_2(0) \sub [-\pi,\pi]^d \quad \text{and} \quad
 \|\psi\|_\infty = \|\ffi\|_\infty \lt 1.
 }
Furthermore, $\psi$ annihilates monomials of order up to
$[\al]$, as $\ffi$ does. 
 The evaluation of the first term in \rf{eqn-norm-a,x-1}
 (for $x=0$) is straightforward.
 Thus, we obtained \rf{ls-1111} for $\al\gt 0$.
\end{proof}
The following corollary follows directly from the proof of
Lemma \ref{h-e-1111}.
\begin{corollary}
\lb{cr-he}
It holds that for every $n\in\Nnu$
\aa{
 \ms Z^n(\Tr^d) \sub \td{\ms Z}^n(\Tr^d) \quad 
\text{and} \quad |\cdot|_{\td{\ms Z}^n} \ls |\cdot|_{\ms Z^n}.
}
\end{corollary}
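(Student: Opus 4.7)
The plan is to obtain the corollary as a direct byproduct of the proof of Lemma \ref{h-e-1111}, by tracing through the argument establishing the inequality $|f|_{\td{\ms Z}^\al} \ls |f|_{\ms Z^\al}$ for $\al > 0$ and observing that the hypothesis $\al \notin \Nnu$ is never actually invoked in that half of the proof. Substituting $\al = n \in \Nnu$ throughout then yields the claimed bound $|\cdot|_{\td{\ms Z}^n} \ls |\cdot|_{\ms Z^n}$, and the inclusion $\ms Z^n(\Tr^d) \sub \td{\ms Z}^n(\Tr^d)$ follows immediately by definition.

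More concretely, the first (supremum) term in $|f|_{\td{\ms Z}^n}$ is controlled by the same partition-of-unity argument used for general $\al$, which is insensitive to whether $\al$ is an integer. For the scaling term, given $f \in \ms Z^n$, a base point $x$, a scale $\la \in (0,1]$, and a test function $\ffi \in \td{\ms B}^0_n$, I would choose consecutive dyadic scales $\la_{n+1} < \la \lt \la_n$ and set $\psi(y) := \ffi(\tfrac{\la_{n+1}}{\la}\, y)$, exactly mirroring the construction in the proof of Lemma \ref{h-e-1111}. Since $\tfrac12 < \tfrac{\la_{n+1}}{\la} \lt 1$, one gets $\supp \psi \sub B_2(0) \sub [-\pi,\pi]^d$ and $\|\psi\|_\infty \lt 1$. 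The key observation is that $\psi$ inherits from $\ffi$ the vanishing-moment condition for all monomials of degree $|k| < n$, because multiplicative rescaling of the argument by a positive constant preserves polynomial orthogonality; hence $\psi \in \ms B^0_n$ (up to a harmless constant factor absorbed into $\ls$), and the dyadic bound $\frac{|f(\psi^{\la_{n+1}}_x)|}{\la_{n+1}^n}\ls |f|_{\ms Z^n}$ transfers at once to a continuous-scale bound for $\frac{|f(\ffi^\la_x)|}{\la^n}$.

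The only step in the proof of Lemma \ref{h-e-1111} that genuinely required $\al \notin \Nnu$ was the opposite direction $|f|_{\ms Z^\al} \ls |f|_{\td{\ms Z}^\al}$, which used a Taylor expansion of degree $[\al]$: for non-integer $\al$, the moment vanishing of degree $< \al$ coincides with that of degree $\lt [\al]+1$, exactly what the Lagrange remainder of order $[\al]$ demands, while for integer $n$ the moment condition yields annihilation only of degree $\lt n$, which is one order too weak for the remainder. Since the corollary asserts only the one-sided estimate, this obstruction is irrelevant, and no additional work beyond citing the corresponding portion of the proof of Lemma \ref{h-e-1111} is required. I do not foresee any technical difficulty; the main point is simply to record explicitly that the half of the argument giving $|\cdot|_{\td{\ms Z}^\al} \ls |\cdot|_{\ms Z^\al}$ is dimension- and integrality-agnostic.
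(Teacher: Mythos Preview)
Your proposal is correct and matches the paper's approach exactly: the corollary is stated as following directly from the proof of Lemma \ref{h-e-1111}, and your observation that the half of that proof establishing $|f|_{\td{\ms Z}^\al} \ls |f|_{\ms Z^\al}$ for $\al \gt 0$ never invokes $\al \notin \Nnu$ is precisely the intended point. One harmless slip: the first supremum term in $|f|_{\td{\ms Z}^n}$ is bounded trivially since any translate of $\psi \in \td{\ms B}^0$ already lies in $\ms B^0$, not via the partition-of-unity argument (which the paper uses only in the opposite direction and simply calls ``straightforward'' here).
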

\begin{lemma}%{lem}
\lb{lem22-22}
Suppose $\eta\in \ms Z^{-\kp}$, $\kp>0$. Then,
\aa{
|\eta^\eps|_{\ms Z^{-\kp}} \lt |\eta|_{\ms Z^{-\kp}}, \quad \text{where} \quad \eta^\eps: =  \rho_\eps \ast \eta.
}
\end{lemma}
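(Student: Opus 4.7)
The plan is to exploit the translation invariance of the H\"older--Zygmund norm and represent the mollification as a weighted average of translates of the test function, in the same spirit as the key step in the proof of Lemma \ref{wn1111}. Since $-\kp-1<0$, by Definition \ref{HZ1111} with $r=[\kp+2]$,
\[
|\eta^\eps|_{\ms Z^{-\kp-1}} = \sup_{\substack{x\in\Tr^d\!,\, \la\in[0,1]_\D,\\ \ffi\in\ms B^r}} \frac{|\eta^\eps(\ffi^\la_x)|}{\la^{-\kp-1}},
\]
so it suffices to fix such $x$, $\la$, $\ffi$ and bound the numerator uniformly in terms of $|\eta|_{\ms Z^{-\kp}}$ and a factor that can be absorbed into $\la^{-\kp-1}$.

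The first step is the duality identity $\eta^\eps(\ffi^\la_x) = \eta(\rho_\eps \ast \ffi^\la_x)$, which uses that the standard mollifier $\rho$ is symmetric. Then the change of variable $w = (y-\,\cdot\,)/\eps$ inside the convolution yields
\[
(\rho_\eps \ast \ffi^\la_x)(y) = \int \rho(w)\, \ffi^\la_x(y-\eps w)\, dw = \int \rho(w)\, \ffi^\la_{x+\eps w}(y)\, dw,
\]
so that
\[
\eta^\eps(\ffi^\la_x) = \int \rho(w)\, \eta(\ffi^\la_{x+\eps w})\, dw.
\]
Thus $\eta^\eps(\ffi^\la_x)$ is expressed as an average of the values of $\eta$ against the \emph{same} test function $\ffi$, at the \emph{same} dyadic scale $\la$, merely translated by $\eps w$.

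The second step is to estimate each translate. For every $w$, the bump $\ffi^\la_{x+\eps w}$ is of the form $\ffi^\la_y$ with $y = x+\eps w\in\Tr^d$, and since $r = [\kp+2]\gt [\kp+1]$, one has $\ms B^r \sub \ms B^{[\kp+1]}$. Hence, by Definition \ref{HZ1111} applied to $\eta$ at regularity $-\kp$,
\[
|\eta(\ffi^\la_{x+\eps w})| \lt |\eta|_{\ms Z^{-\kp}}\, \la^{-\kp}.
\]
Integrating against $\rho$ and using $\int \rho = 1$ yields $|\eta^\eps(\ffi^\la_x)| \lt |\eta|_{\ms Z^{-\kp}}\, \la^{-\kp}$. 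Dividing by $\la^{-\kp-1}$ and using $\la \lt 1$, which gives $\la^{-\kp}/\la^{-\kp-1} = \la \lt 1$, produces the desired bound; taking the supremum completes the proof.

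No essential obstacle arises here: the two bookkeeping points are (i) that the periodic extension of $\ffi^\la_{x+\eps w}$ remains a valid test function on $\Tr^d$, which is immediate by $2\pi$-periodicity (cf.\ Remark \ref{remark-ball-1111}), and (ii) that $\la$ is unchanged by translation, so it still lies in $[0,1]_\D$ after the reparametrization. The drop from $-\kp$ to $-\kp-1$ on the left absorbs, through the factor $\la \lt 1$, the only slack in the estimate.
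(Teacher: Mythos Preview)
Your proof is correct and follows essentially the same route as the paper: express $\eta^\eps(\ffi^\la_x)$ as an average $\int \rho(w)\,\eta(\ffi^\la_{x+\eps w})\,dw$ of translates and bound each translate by $|\eta|_{\ms Z^{-\kp}}\la^{-\kp}$ using periodicity. The only cosmetic differences are that the paper first justifies the convolution identity via a smooth approximation $\eta^{\bar\eps}\to\eta$ and works directly with test functions in $\ms B^{[\kp+1]}$ (in effect proving the slightly stronger bound at level $-\kp$), whereas you invoke the duality formula directly and make the passage from $-\kp$ to $-\kp-1$ explicit via the factor $\la\lt 1$.
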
%{lem}
\begin{proof}
For the function 
$(\eta^{\bar \eps})^\eps: = \rho_\eps \ast \eta^{\bar \eps}$, we have
\mm{
(\eta^{\bar \eps})^\eps(\ffi^\la_x) = \int  \eta^{\bar \eps}(z-y)
\rho_\eps(y) \ffi^\la_x(z) dy dz = 
%- \int \eta^{\bar \eps}(y-\fdot)(\ffi^\la_x) \rho_\eps(y) dy = 
\int  \eta^{\bar \eps}(z) \rho_\eps(y) \ffi^\la_x(z+y) dy dz\\ 
= \int \eta^{\bar \eps} (\ffi^\la_{x-y}) \rho_\eps(y) dy.
}
Passing to the limit as $\bar \eps\to 0$, we obtain
\aa{
%&\eta^\eps(y) =  \int \eta(z-y)\rho_\eps(z) dz
\eta^\eps(\ffi^\la_x) =  \int \eta(\ffi^\la_{x-y}) 
\rho_\eps(y) dy.
}
We have
\aa{
\sup_{\substack{x\in\Tr^d,\, \la\in [0,1]_\D,\\ \ffi\in \ms B^{[\kp+1]}}}
|\eta^\eps(\ffi^\la_x)| 
\lt \int  \sup_{\substack{x\in\Tr^d,\, \la\in [0,1]_\D,\\ \ffi\in \ms B^{[\kp+1]}}}|\eta(\ffi^\la_{x-y})|
\rho_\eps(y) dy
\lt \sup_{\substack{x\in\Tr^d,\, \la\in [0,1]_\D,\\ \ffi\in \ms B^{[\kp+1]}}}|\eta(\ffi^\la_x)|,
}
where the latter inequality is true by the periodicity of $\ffi^\la_x$.
% Note that
% \aa{
% \rho_\eps \ast\ffi^\la_x = \int\rho_\eps(\fdot - x - \la y)\ffi(y) dy =
% \big(\rho_{\frac{\eps}{\la}} \ast \ffi\big)_x^\la.
% }
% By the definition of the norm in $\Zs^\al$,
% \mm{
% |\eta^\eps|_\al =
%  \sup_{\substack{x\in\Tor,\, \la\in (0,1],\\ \ffi\in \ms B^r,\, r=[-\al+1]}}
%  \frac{|\int \eta(\rho_\eps(y-\fdot))\ffi^\la_x(y) dy)|}{\la^\al}
%  =\sup_{\substack{x\in\Tor,\, \la\in (0,1],\\ \ffi\in \ms B^r,\, r=[-\al+1]}}
%  \frac{|\eta(\rho_\eps \ast\ffi^\la_x)|}{\la^\al}\\
%  = \sup_{\substack{x\in\Tor,\, \la\in (0,1],\\ \ffi\in \ms B^r,\, r=[-\al+1]}}
%  \frac{|\eta(\rho_{\frac{\eps}{\la}} \ast \ffi)_x^\la)|}{\la^\al}
%  \lt \sup_{\substack{x\in\Tor,\, \la\in (0,1],\\ \ffi\in \ms B^r,\, r=[-\al+1]}}
%  \frac{|\eta(\ffi_x^\la)|}{\la^\al} = |\eta|_\al.
%  }
%  If $\al\gt 0$, the computation is the same with the exception that the supremum is over
%  $\ffi\in \ms B^0_\al$.
\end{proof}
\begin{lemma}%{lem}
\lb{lem-deri-1111}
Let $\kp>0$ and $\eta\in \ms Z^{-\kp}$. Then,
\aa{
|\pl_i \eta|_{\ms Z^{-\kp-1}} \lt |\eta|_{\ms Z^{-\kp}}.
}
\end{lemma}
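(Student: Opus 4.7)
The plan is to pass the derivative onto the test function via the distributional pairing and then exploit the scaling identity for the rescaled function $\ffi^\la_x$. Since $\al := -\kp-1 < 0$, by Definition \ref{HZ1111} the norm $|\pl_i \eta|_{\ms Z^{-\kp-1}}$ is given by the supremum
\[
\sup_{\substack{x\in\Tr^d,\, \la\in [0,1]_\D,\\ \ffi\in \ms B^{[\kp+2]}}}
\frac{|(\pl_i\eta)(\ffi^\la_x)|}{\la^{-\kp-1}},
\]
so I will take a generic $\ffi\in \ms B^{[\kp+2]}$, $\la\in [0,1]_\D$, $x\in \Tr^d$, and estimate the numerator.

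First, by the definition of the distributional derivative, $(\pl_i\eta)(\ffi^\la_x) = -\eta(\pl_i[\ffi^\la_x])$. Next I will carry out the direct computation
\[
\pl_i[\ffi^\la_x](y) = \pl_i\!\left[\tfrac{1}{\la^d}\ffi\!\left(\tfrac{y-x}{\la}\right)\right]
= \tfrac{1}{\la}\cdot \tfrac{1}{\la^d}(\pl_i\ffi)\!\left(\tfrac{y-x}{\la}\right)
= \tfrac{1}{\la}(\pl_i\ffi)^\la_x(y),
\]
so that $(\pl_i\eta)(\ffi^\la_x) = -\tfrac{1}{\la}\eta\bigl((\pl_i\ffi)^\la_x\bigr)$.

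The second step is to verify that $\psi := \pl_i\ffi$ lies in the correct class of test functions for the norm of $\eta$. Since $\al = -\kp$, the definition uses $\ms B^{[\kp+1]}$. For any multiindex $k$ with $|k|\lt [\kp+1]$ one has $|k+e_i|\lt [\kp+2]$, hence $\|\pl^k\psi\|_\infty = \|\pl^{k+e_i}\ffi\|_\infty\lt 1$, showing $\psi\in \ms B^{[\kp+1]}$. Applying the defining bound for $|\eta|_{\ms Z^{-\kp}}$ to $\psi^\la_x$ then yields
\[
|(\pl_i\eta)(\ffi^\la_x)| = \tfrac{1}{\la}\bigl|\eta(\psi^\la_x)\bigr|
\lt \tfrac{1}{\la}\,\la^{-\kp}\,|\eta|_{\ms Z^{-\kp}}
= \la^{-\kp-1}\,|\eta|_{\ms Z^{-\kp}}.
\]
Dividing by $\la^{-\kp-1}$ and taking the supremum over $x$, $\la$, and $\ffi \in \ms B^{[\kp+2]}$ concludes the argument.

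There is no genuine obstacle: the proof is essentially a bookkeeping exercise combining integration by parts with the chain-rule scaling $\pl_i[\ffi^\la_x]=\la^{-1}(\pl_i\ffi)^\la_x$, which accounts for exactly the one unit of regularity lost. The only point that requires care is checking that differentiating $\ffi\in\ms B^{[\kp+2]}$ produces a function in $\ms B^{[\kp+1]}$, so that the pairing of $\eta$ with the rescaled $\psi^\la_x$ is controlled by $|\eta|_{\ms Z^{-\kp}}$.
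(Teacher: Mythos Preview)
Your proof is correct and follows essentially the same approach as the paper: transfer the derivative onto the test function, use the scaling identity $\pl_i[\ffi^\la_x]=\la^{-1}(\pl_i\ffi)^\la_x$ to absorb one power of $\la$, and observe that $\pl_i\ffi\in\ms B^{[\kp+1]}$ whenever $\ffi\in\ms B^{[\kp+2]}$. The paper's proof is the same computation written more compactly, with the minus sign from integration by parts suppressed under the absolute value.
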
%{lem}
\begin{proof}
We have
\begin{align*}
|\pl_i \eta|_{\ms Z^{-\kp-1}} &=
 \sup_{\substack{x\in\Tr^d,\, \la\in [0,1]_\D,\\ \ffi\in \ms B^r,\, r=[2+\kp]}}\frac{|\pl_i\eta(\ffi^\la_x)|}{\la^{-1-\kp}}
 = \sup_{\substack{x\in\Tr^d,\, \la\in [0,1]_\D,\\ \ffi\in \ms B^r,\, r=[2+\kp]}}\frac{|\eta(\pl_i\ffi^\la_x)|}{\la^{-\kp}}
\\ &\lt \sup_{\substack{x\in\Tr^d,\, \la\in [0,1]_\D,\\ \psi\in \ms B^r,\, r=[1+\kp]}}
 \frac{|\eta(\psi^\la_x)|}{\la^{-\kp}}
 = |\eta|_{\ms Z^{-\kp}}.
 \end{align*}
\end{proof}
\begin{proposition}%{pro}
\lb{eq1111}
Let $\al<0$.
The H\"older-Zygmund spaces $\ms Z^\al(\Tr^d)$ 
can be equivalently defined with the use of any integer
$r>[-\al+1]$, that is, as the space of distributions 
$f\in \mc D'(\Tr^d)$ with the finite norm
\aaa{
\lb{r-norm-D}
|f|_{\ms Z^\al}^{(r)}:=
 \sup_{\substack{x\in\Tr^d,\, \la\in [0,1]_\D,\\ \ffi\in \ms B^r}}\frac{|f(\ffi^\la_x)|}{\la^\al},  \quad \text{where} \;\;  r>[-\al+1],\; r\in\Nnu.
}
Furthermore, the norms $|\cdot|_{\ms Z^\al}^{(r)}$ and $|\cdot|_{\ms Z^\al}$ 
are equivalent; in particular, 
the norms $|\cdot|_\al$ and $|\cdot|^{(r)}_\al$ are equivalent, where
the latter is the restriction of $|\cdot|_{\ms Z^\al}^{(r)}$
to $\Zs^\al(\Tr^d)$.
\end{proposition}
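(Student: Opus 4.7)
The plan is to show $|f|^{(r)}_{\ms Z^\al}$ and $|f|_{\ms Z^\al}$ are comparable. One direction is free: since $r > [-\al+1]$, every $\ffi \in \ms B^r$ automatically lies in $\ms B^{[-\al+1]}$, so $\ms B^r \subset \ms B^{[-\al+1]}$ and the supremum defining $|f|^{(r)}_{\ms Z^\al}$ is taken over a smaller class, giving $|f|^{(r)}_{\ms Z^\al} \leq |f|_{\ms Z^\al}$. The substance is the reverse bound $|f|_{\ms Z^\al} \ls |f|^{(r)}_{\ms Z^\al}$.

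For this I would decompose any $\ffi \in \ms B^{[-\al+1]}$ into pieces that, after a suitable rescaling, lie in $\ms B^r$. Fix a smooth compactly supported mollifier $\rho$ with $\int \rho = 1$ and vanishing moments up to order $[-\al+1]-1$, and write
\[
\ffi = \ffi\ast\rho_1 + \sum_{k=0}^{\infty} \eta_k, \qquad \eta_k := \ffi\ast\rho_{2^{-k-1}} - \ffi\ast\rho_{2^{-k}},
\]
with $\C^\infty$-convergence since $\ffi$ is already smooth. The leading term $\ffi\ast\rho_1$ has all $\C^m$ norms bounded uniformly in $\ffi$ by Young's inequality, so $c\,\ffi\ast\rho_1 \in \ms B^r$ for a fixed $c>0$, contributing at most $c^{-1}\la^\al |f|^{(r)}_{\ms Z^\al}$ to $|f(\ffi^\la_x)|$. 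A Taylor expansion using the vanishing moments of $\rho$ yields $\|D^m \eta_k\|_\infty \ls 2^{k(m-[-\al+1])}$ uniformly in $0 \leq m \leq r$. I then localize $\eta_k$ at its natural scale $2^{-k}$ by a partition of unity $\{\chi_k^{(n)}\}_n$ on $\Tr^d$ subordinated to a cover by balls of radius of order $2^{-k}$, with $\|D^m \chi_k^{(n)}\|_\infty \ls 2^{km}$ and cardinality $O(2^{kd})$, and write each localized piece as $\eta_k\chi_k^{(n)} = c_{k,n}(\psi_{k,n})^{2^{-k}}_{x_n}$. A product-rule computation combined with the identity $\psi_{k,n}(z) = 2^{-kd} c_{k,n}^{-1} (\eta_k \chi_k^{(n)})(x_n + 2^{-k}z)$ shows that the choice $|c_{k,n}| \ls 2^{-k(d + [-\al+1])}$ forces $\psi_{k,n} \in \ms B^r$, the powers $2^{km}$ arising from differentiating the factors cancelling exactly with the $2^{-km}$ from the rescaling. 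Using $(\psi^\mu_y)^\la_x = \psi^{\la\mu}_{x+\la y}$ and $\la 2^{-k} \in [0,1]_\D$, the norm $|f|^{(r)}_{\ms Z^\al}$ applies to each rescaled piece, giving $|f((\eta_k\chi_k^{(n)})^\la_x)| \leq |c_{k,n}|(\la 2^{-k})^\al |f|^{(r)}_{\ms Z^\al}$. Summing over the $O(2^{kd})$ patches absorbs the $2^{-kd}$ factor, and the geometric series $\sum_{k\geq 0} 2^{-k([-\al+1]+\al)}$ converges because $[-\al+1] + \al > 0$ (as $[-\al+1]$ is the smallest integer strictly above $-\al$).

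The hard part is the bookkeeping in the localization step, namely verifying uniformly in $k$, $n$, and $0 \leq m \leq r$ that $\|D^m \psi_{k,n}\|_\infty \ls 1$; this requires choosing $\rho$ with enough vanishing moments so that the bound $\|D^m \eta_k\|_\infty \ls 2^{k(m-[-\al+1])}$ also holds for low-order derivatives $m < [-\al+1]$, and handling patch overlaps carefully on the torus. Once the equivalence is established on $\ms Z^\al(\Tr^d)$, the final assertion for $|\cdot|_\al$ and $|\cdot|^{(r)}_\al$ on $\Zs^\al(\Tr^d)$ follows immediately by restriction.
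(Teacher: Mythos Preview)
Your argument is correct and essentially self-contained: the Littlewood--Paley/multiresolution decomposition of $\ffi\in\ms B^{[-\al+1]}$ into a smoothed part plus a telescoping sum $\sum_k\eta_k$, followed by spatial localization of each $\eta_k$ at scale $2^{-k}$, is the standard constructive route and the bookkeeping you flag (derivative bounds on $\psi_{k,n}$, convergence of $\sum_k 2^{-k([-\al+1]+\al)}$) goes through as you indicate. One minor wording point: for $m>[-\al+1]$ the bound $\|D^m\eta_k\|_\infty\ls 2^{k(m-[-\al+1])}$ comes from putting $m-[-\al+1]$ derivatives on the mollifier rather than from the vanishing moments; you seem to know this but the sentence ``A Taylor expansion using the vanishing moments'' covers only the low-$m$ case.

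The paper's proof is entirely different and much softer. It first passes from the dyadic-scale norm $|\cdot|_{\ms Z^\al}^{(r)}$ to its continuous-scale, ball-supported analogue $|\cdot|_{\td{\ms Z}^\al}^{(r)}$ (same argument as their Lemma~\ref{h-e-1111}), then invokes an external result \cite[Lemma~14.13]{FH20} to say that on $\ms Z^\al(\Rnu^d)\cap\mc D'(\Tr^d)$ the norms $|\cdot|_{\td{\ms Z}^\al}^{(r)}$ and $|\cdot|_{\td{\ms Z}^\al}$ define the \emph{same Banach space}, and finally concludes norm equivalence by applying the open mapping theorem to the identity map between the two normings. Your approach unpacks the black box, yielding an explicit comparison constant depending only on $d$, $\al$, $r$, and the choice of $\rho$; the paper's approach is shorter on the page but is non-constructive and relies on completeness of both normings plus the cited lemma.
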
%{pro}
\begin{proof}
Let $r>[-\al+1],\; r\in\Nnu$. Define
\aaa{
\lb{r-norm}
|f|_{\td{\ms Z}^\al}^{(r)}:=
 \sup_{\substack{x\in\Tr^d,\, \la\in (0,1],\\ 
 \ffi\in \td{\ms B}^r}}\frac{|f(\ffi^\la_x)|}{\la^\al}.
}
The equivalence of  the norms \rf{r-norm-D} and \rf{r-norm}
follows by exactly the same argument as in the proof
of Lemma \ref{h-e-1111}. Let us prove the equivalence of
$|\cdot|_{\td{\ms Z}^\al}^{(r)}$ and $|\cdot|_{\td{\ms Z}^\al}$.

Since $\td{\ms Z}^\al(\Tr^d) = \ms Z^\al(\Rnu^d)\cap \ms D'(\Tr^d)$,
see Remark \ref{intersection-1111}, by \cite[Lemma 14.13]{FH20}, 
$|\cdot|_{\td{\ms Z}^\al}^{(r)}$ and $|\cdot|_{\td{\ms Z}^\al}$ 
define the same Banach space. 
% The fact that the norm $|f|_{\ms Z^\al}^{(r)}$ 
% defines the same space $\ms Z^\al$ is proved
% in \cite[Proposition B.1]{bcz2023-1}. 
% Let us show the equivalence of the norms $|\cdot|_{\td{\ms Z}^\al}^{(r)}$ 
% and $|\cdot|_{\td{\ms Z}^\al}$. 
Note that
\aa{
|f|_{\td{\ms Z}^\al}^{(r)} \lt |f|_{\td{\ms Z}^\al}, \quad 
f\in \td{\ms Z}^\al,
}
since the supremum in the definition of $|f|_{\td{\ms Z}^\al}$ is over a larger set.
Since the norms define the same Banach space, their equivalence
is a consequence of the open mapping theorem applied to the bounded linear operator
\aa{
J: \, (\td{\ms Z}^\al, |\cdot|_{\td{\ms Z}^\al}) 
\to (\td{\ms Z}^\al, |\cdot|_{\td{\ms Z}^\al}^{(r)}), \quad f\mto f.
}
Since, by Lemma \ref{h-e-1111}, the norms $|\cdot|_{\td{\ms Z}^\al}$
and $|\cdot|_{\ms Z^\al}$ are equivalent, we conclude
the equivalence of the norms $|\cdot|_{\ms Z^\al}$  and
$|\cdot|_{\ms Z^\al}^{(r)}$.
\end{proof}
\begin{lemma}
\lb{fh2222}
For $\al\notin\Nnu$, $\ms Z^\al(\Tr^d) = \C^\al(\Tr^d)$ and 
the norms in $\ms Z^\al(\Tr^d)$ and $\C^\al(\Tr^d)$ are equivalent. 
Moreover, if $n=[\alpha]$, then 
\[
\C^{n+1} \subset  \ms Z^\al \subset \C^{n}.
\]
\end{lemma}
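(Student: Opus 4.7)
The plan is to reduce the lemma to the classical Euclidean Besov--H\"older equivalence, and then descend to the torus by means of the identifications built earlier in this appendix. First, by Lemma \ref{h-e-1111}, for every $\al\notin \Nnu$ we have $\ms Z^\al(\Tr^d)=\td{\ms Z}^\al(\Tr^d)$ with equivalent norms. By Remark \ref{intersection-1111}, $\td{\ms Z}^\al(\Tr^d)=\ms Z^\al(\Rnu^d)\cap \mc D'(\Tr^d)$, and $|\cdot|_{\td{\ms Z}^\al}$ coincides with the restriction of the Euclidean H\"older--Zygmund seminorm to periodic distributions.

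Next I would invoke the classical theorem that, for every non-integer $\al$, the Euclidean Besov-type space $\ms Z^\al(\Rnu^d)$ coincides with $\C^\al(\Rnu^d)$ with equivalent norms; the version I have in mind is the characterisation discussed in \cite[Ch.~13--14]{FH20}. Restricting both sides to periodic distributions, and using that $\C^\al(\Tr^d)$ is by definition the restriction of $\C^\al(\Rnu^d)$ to such distributions, yields $\ms Z^\al(\Tr^d)=\C^\al(\Tr^d)$ with equivalent norms, settling the first assertion.

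For the chain $\C^{n+1}(\Tor)\subset \ms Z^\al\subset \C^n(\Tor)$ with $n=[\al]$ and $\al\notin \Nnu$, I would combine the equivalence just established with the elementary monotonicity of the H\"older--Zygmund scale: since $n<\al<n+1$, one has $\C^{n+1}\subset \C^\al\subset \C^n$. For $\al>0$ this is standard. For $\al<0$, it follows directly from the scaling definition \rf{HZ2}, since the bound $|f(\ffi^\la_x)|\ls \la^\al$ becomes weaker as $\al$ decreases.

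The main obstacle I anticipate is purely bookkeeping: the paper normalises $\ms Z^\al$ using dyadic scales $\la\in [0,1]_\D$ and test functions of $\Tr^d$-support, while the Euclidean reference uses all $\la\in (0,1]$ and compactly supported bumps. This matching is precisely what is accomplished by Proposition \ref{eq1111} (removing the dyadic restriction and enlarging the regularity class $\ms B^r$) and Lemma \ref{h-e-1111} (switching between the two support conventions), so once those results are applied the Euclidean theorem can be invoked directly. No further analytic input is required beyond what is already collected in this appendix.
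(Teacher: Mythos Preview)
Your proposal is correct and follows the same route as the paper: reduce to $\td{\ms Z}^\al(\Tr^d)$ via Lemma~\ref{h-e-1111}, identify this with $\ms Z^\al(\Rnu^d)\cap\mc D'(\Tr^d)$ via Remark~\ref{intersection-1111}, invoke \cite[Proposition~14.15]{FH20} for the Euclidean coincidence $\ms Z^\al(\Rnu^d)=\C^\al(\Rnu^d)$, and read off the chain of inclusions from monotonicity of the H\"older scale.

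The one place where the paper is more careful than your outline is the norm equivalence for $\al>0$. You assert that equivalent norms descend directly by ``restricting both sides to periodic distributions''; this is plausible but not automatic, since the Euclidean equivalence is stated in terms of local seminorms over compacts, possibly with enlargements. The paper instead proves $|f|_{\td{\ms Z}^\al}\ls\|f\|_{\C^\al}$ by hand (the same Taylor-remainder argument already appearing in \rf{taylor-1111}), then obtains the reverse inequality from the open mapping theorem applied to the identity map between two Banach spaces that coincide as sets. For $\al<0$ there is nothing to prove since $\C^\al(\Tr^d)$ is defined as $\td{\ms Z}^\al(\Tr^d)$. Your reference to Proposition~\ref{eq1111} is not needed here; Lemma~\ref{h-e-1111} alone handles the matching of conventions.
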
%{pro}
\begin{proof}
By Lemma \ref{h-e-1111}, $\ms Z^\al(\Tr^d) = \td{\ms Z}^\al(\Tr^d)$ and
the norms $|\cdot|_{\ms Z^\al}$ and $|\cdot|_{\td{\ms Z}^\al}$
are equivalent.
By Remark \ref{intersection-1111},
 $\td{\ms Z}^\al(\Tr^d) = \ms Z^\al(\Rnu^d)\cap \ms D'(\Tr^d)$,
 where $\ms Z^\al(\Rnu^d)$ is the classical H\"older-Zygmund space, see \cite{bcz2023-1,hairer}.
 Furthermore, by \cite[Proposition 14.15]{FH20},
$\ms Z^\al(\Rnu^d) = \C^\al(\Rnu^d)$. Therefore,
$\td{\ms Z}^\al(\Tr^d) = \C^\al(\Tr^d)$.

Let us show the equivalence of the norms in $\td{\ms Z}^\al(\Tr^d)$
and $\C^\al(\Tr^d)$.
If $\al<0$, then we have nothing to prove since $\C^\al(\Tr^d)$ is 
$\td{\ms Z}^\al(\Tr^d)$ by definition, see 
\cite[Definition 3.7, Section 3]{hairer}.
Let $\al\gt 0$, $\al\notin \Nnu$. By the same argument as
in \rf{taylor-1111}, we obtain
% $n=[\al]$,  and, for a fixed $x\in\Rnu^d$, let $T_{n,x}$ be the Taylor 
% polynomial of degree $n$, based at 0, for the function $f(x+\fdot)$. 
% Since, the test function $\ffi$ in Definition \ref{HZ1111}
% annihilates monomials  up to degree $n$, Taylor's formula implies
%By Definition \ref{HZ1111},
\aa{
 \sup_{\substack{x\in\Tr^d\!, \,\la\in (0,1],\\ \ffi\in \ms B^0_\al}}\frac{|f(\ffi^\la_x)|}{\la^\al}
 % = \sup_{\substack{x\in\Tr^d\!, \,\la\in (0,1],\\ \ffi\in \ms B^0_\al}}
 % \frac{|(f(x+\la \fdot)- T_{n,x}(\la \fdot))(\ffi)|}{\la^\al}
 \ls \sum_{|\beta| = n}\, [D^\beta f]_\al, 
}
where $\beta$ is a multiindex and $D^\beta$ is a partial
derivative of order $|\beta|$. Hence, 
\aa{
|f|_{\td{\ms Z}^\al} \ls \|f\|_{\C^\al}.
}
Finally, by the same argument as in the proof of Proposition \ref{eq1111},
involving the open mapping theorem, 
the norms $|\cdot|_{\td{\ms Z}^\al}$ and $|\cdot|_{\C^\al}$
are equivalent. This implies the first statement of the lemma.
The last statement of the lemma is an immediate consequence
of the first statement.
\end{proof}
\begin{lemma}
\lb{kp1111}
Assume $\kp, \nu >0$, $\nu \notin \Nnu$. Then, $\ms Z^{\nu} 
\sub \ms Z^{-\kp}$ and
\aaa{
\lb{hz-norms-cmp}
|f|_{\ms Z^{-\kp}} \ls |f|_{\ms Z^\nu}.
}
\end{lemma}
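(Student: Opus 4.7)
The plan is to combine the identification $\ms Z^\nu = \C^\nu$ from Lemma \ref{fh2222} with a direct pairing estimate on bounded functions, carried out in the equivalent norm $|\cdot|_{\td{\ms Z}^{-\kp}}$ defined via compactly supported test functions.

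First, since $\nu > 0$ and $\nu \notin \Nnu$, Lemma \ref{fh2222} gives $\ms Z^\nu(\Tr^d) = \C^\nu(\Tr^d)$ with equivalent norms, so any $f \in \ms Z^\nu$ is bounded with $\|f\|_\infty \ls |f|_{\ms Z^\nu}$. I would then work with the norm $|\cdot|_{\td{\ms Z}^{-\kp}}$ from Definition \ref{C-3}, whose test functions $\ffi \in \td{\ms B}^r$ (with $r = [\kp+1]$) are compactly supported in $B_1(0) \sub \Rnu^d$. For every $\la \in (0, 1]$ and $x \in \Tr^d$, the rescaled bump $\ffi^\la_x$ is supported in $B_\la(x) \sub B_1(x)$, which fits inside the injectivity region of $\Tr^d$, and a change of variables gives the uniform bound $\|\ffi^\la_x\|_{L^1(\Tr^d)} = \|\ffi\|_{L^1(\Rnu^d)} \ls 1$. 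Thus
\[
\frac{|f(\ffi^\la_x)|}{\la^{-\kp}} = \la^\kp |f(\ffi^\la_x)| \lt \la^\kp \|f\|_\infty \|\ffi^\la_x\|_{L^1(\Tr^d)} \ls \|f\|_\infty \ls |f|_{\ms Z^\nu},
\]
where I have used $\la^\kp \lt 1$. Taking the supremum then yields $|f|_{\td{\ms Z}^{-\kp}} \ls |f|_{\ms Z^\nu}$.

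To finish, I would apply the partition-of-unity argument from the first part of the proof of Lemma \ref{h-e-1111} to obtain $|f|_{\ms Z^{-\kp}} \ls |f|_{\td{\ms Z}^{-\kp}}$; unlike the reverse inequality, this direction does not require $\kp \notin \Nnu$, so the claim follows for any $\kp > 0$. There is no substantive technical obstacle, as the entire argument is a single pairing inequality. The only point requiring care is the passage to the compactly supported test-function norm $|\cdot|_{\td{\ms Z}^{-\kp}}$: rescaled generic periodic bumps would spread over the torus and lack a uniform $L^1$ bound as $\la \to 0$, whereas the bumps in $\td{\ms B}^r$ remain localized.
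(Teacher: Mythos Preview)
Your proof is correct. Both you and the paper invoke Lemma~\ref{fh2222} to identify $\ms Z^\nu$ with $\C^\nu$ and then bound the pairing $f(\ffi^\la_x)$ directly. The paper works with the periodic test functions $\ffi\in\ms B^\ell$ and splits $f(x+\la y)=f(x)+\bigl(f(x+\la y)-f(x)\bigr)$, controlling the two pieces by $\|f\|_\infty$ and the H\"older seminorm $[f]_\nu$ respectively. Your route is a shade more elementary: by passing to compactly supported $\ffi\in\td{\ms B}^r$ you secure the uniform bound $\|\ffi^\la_x\|_{L^1}\ls 1$ (which, as you correctly flag, fails for generic periodic $\ffi$), so the single estimate $|f(\ffi^\la_x)|\lt\|f\|_\infty\|\ffi^\la_x\|_{L^1}$ already suffices, and you return to $|\cdot|_{\ms Z^{-\kp}}$ via Lemma~\ref{h-e-1111}. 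Your closing remark that this direction of Lemma~\ref{h-e-1111} does not require $\kp\notin\Nnu$ is true but moot: since $-\kp<0$ is never a positive integer, the lemma applies as stated.
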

\begin{proof}
Take $f\in \ms Z^{\nu}$ and let $\ell = [\kp+1]$. By Lemma \ref{fh2222},
\aa{
|f|_{\ms Z^{-\kp}} =&\,\sup_{\substack{x\in\Tr^d,\, \la\in [0,1]_\D,\\ \ffi\in \ms B^\ell}}\la^\kp \big|f(\ffi^\la_x)\big|
=\sup_{\substack{x\in\Tr^d,\, \la\in [0,1]_\D,\\ \ffi\in \ms B^\ell}}
\la^\kp \big|f(x+\la\fdot)(\ffi)\big|\\
\lt &\, \sup_{x\in\Tr^d,\, \la\in [0,1]_\D}\la^\kp |f(x)|\,\sup_{\ffi\in \ms B^\ell} \Big|
\int_{\Tr^d} \ffi(y) dy\Big|\\
+& \,\sup_{\substack{x\in\Tr^d,\, \la\in [0,1]_\D,\\ \ffi\in \ms B^\ell}} \la^{\kp+\nu} \Big|
\int_{\Tr^d} |y|^\nu \frac{\big|f(x+\la y)- f(x)\big|}{|\la y|^\nu}\ffi(y) dy\Big|\\
\ls &\, \big[\|f\|_\infty+ [f]_\nu\big] \,\sup_{\ffi\in \ms B^\ell} \Big|\int_{\Tr^d} \ffi(y) dy\Big| \ls
\|f\|_{\C^{\nu}} \ls |f|_{\ms Z^\nu}.
}
\end{proof}
The following corollary is an immediate consequence of 
Lemma \ref{kp1111}.
\begin{corollary} 
\lb{kp2222}
Assume $\kp, \nu >0$, $\nu \notin \Nnu$. Then, $\Zs^\nu \sub \Zs^{-\kp}$ and
\aaa{
\lb{little-hz-norms-cmp}
|f|_{-\kp} \ls |f|_{\nu}.
}
\end{corollary}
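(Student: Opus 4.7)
The plan is to deduce Corollary \ref{kp2222} directly from Lemma \ref{kp1111} together with the definition of the little H\"older--Zygmund space $\Zs^\al$ as the closure of $\C^\infty(\Tor)$ in $\ms Z^\al(\Tor)$. Since the norm $|\cdot|_\al$ on $\Zs^\al(\Tor)$ is by definition the restriction of $|\cdot|_{\ms Z^\al}$, the norm bound \rf{little-hz-norms-cmp} is immediate from the bound \rf{hz-norms-cmp} in Lemma \ref{kp1111}, provided that we first establish the set-theoretic inclusion $\Zs^\nu(\Tor) \subset \Zs^{-\kp}(\Tor)$.

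For the inclusion, I would argue as follows. Fix $f \in \Zs^\nu(\Tor)$. By definition of $\Zs^\nu(\Tor)$, there exists a sequence $(f_n)\subset \C^\infty(\Tor)$ such that $|f - f_n|_\nu \to 0$ as $n\to\infty$. Applying Lemma \ref{kp1111} to the distribution $f-f_n \in \ms Z^\nu(\Tor)$ (which is legitimate since $\nu>0$, $\nu\notin \Nnu$) yields
\begin{equation*}
|f - f_n|_{\ms Z^{-\kp}} \ls |f-f_n|_{\ms Z^\nu} \to 0.
\end{equation*}
Hence $f_n\to f$ in $\ms Z^{-\kp}(\Tor)$ with $f_n\in \C^\infty(\Tor)$, which means that $f$ lies in the closure of $\C^\infty(\Tor)$ inside $\ms Z^{-\kp}(\Tor)$, i.e.\ $f \in \Zs^{-\kp}(\Tor)$. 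This proves $\Zs^\nu(\Tor)\subset \Zs^{-\kp}(\Tor)$.

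Finally, the norm inequality \rf{little-hz-norms-cmp} is just \rf{hz-norms-cmp} restricted to the subspace $\Zs^\nu(\Tor)\subset \ms Z^\nu(\Tor)$, since $|\cdot|_\nu$ and $|\cdot|_{-\kp}$ are by definition the restrictions of $|\cdot|_{\ms Z^\nu}$ and $|\cdot|_{\ms Z^{-\kp}}$, respectively. There is no genuine obstacle here: the entire content of the corollary is contained in Lemma \ref{kp1111}, and the little--H\"older--Zygmund statement follows by a one-line density argument. The only minor point to keep in mind is the hypothesis $\nu\notin \Nnu$ required by Lemma \ref{kp1111} (to invoke the equivalence $\ms Z^\nu = \C^\nu$ via Lemma \ref{fh2222}), which is already assumed in the statement of Corollary \ref{kp2222}.
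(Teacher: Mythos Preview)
Your proof is correct and follows essentially the same approach as the paper: both derive the corollary directly from Lemma \ref{kp1111}, with the inclusion $\Zs^\nu\subset\Zs^{-\kp}$ obtained from the norm inequality \rf{hz-norms-cmp} via the definition of $\Zs^\al$ as the closure of $\C^\infty$. The paper compresses this into a single sentence, whereas you spell out the density argument explicitly, but the content is identical.
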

\begin{proof}
Inequality \rf{hz-norms-cmp} immediately implies that $\Zs^\nu \sub \Zs^{-\kp}$
and \rf{little-hz-norms-cmp} holds.
\end{proof}
\begin{lemma}
\lb{smooth}
The following equality is true:
 \aa{
 \C^\infty = \medcap_{\al\in\Rnu} \, \ms Z^\al = \medcap_{\al\in\Rnu} \, \Zs^\al.
 }
\end{lemma}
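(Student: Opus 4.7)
The plan is to establish the chain
$\C^\infty \subset \bigcap_{\al\in\Rnu} \Zs^\al \subset \bigcap_{\al\in\Rnu} \ms Z^\al \subset \C^\infty$,
from which both asserted equalities follow at once. The middle inclusion is immediate because $\Zs^\al \subset \ms Z^\al$ by definition. The first inclusion is by construction: since $\Zs^\al$ is defined as the closure of $\C^\infty(\Tr^d)$ inside $\ms Z^\al(\Tr^d)$, one has $\C^\infty \subset \Zs^\al$ for every $\al\in\Rnu$ as soon as one has verified that the norm $|\cdot|_{\ms Z^\al}$ is finite on $\C^\infty$.

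For the last inclusion $\bigcap_\al \ms Z^\al \subset \C^\infty$, I would invoke Lemma \ref{fh2222}, which identifies $\ms Z^\al=\C^\al$ with equivalent norms whenever $\al\notin\Nnu$. Restricting the intersection to half-integer values, any $f\in\bigcap_{\al\in\Rnu} \ms Z^\al$ satisfies $f\in \C^{n+1/2}\subset \C^n$ for every $n\in\Nnu_0$, and hence $f\in\C^\infty$.

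It remains to show $\C^\infty\subset \ms Z^\al$ for every $\al\in\Rnu$. When $\al\notin\Nnu$ this is again Lemma \ref{fh2222}. For $\al\lt 0$, taking $r=[-\al+1]$ and $\ffi\in\ms B^r$, the crude bound $|f(\ffi^\la_x)|\lt \|f\|_\infty \|\ffi\|_{L^1}$ combined with $\la^{-\al}\lt 1$ for $\la\in[0,1]_\D$ gives finiteness of $|f|_{\ms Z^\al}$ at once. For the integer case $\al=n\gt 0$, I would replay the Taylor argument in the display \rf{taylor-1111}: for $\ffi\in \ms B^0_n$, which annihilates all monomials of order up to $n-1$, expanding $f(x+\la y)$ around $x$ to order $n-1$ and integrating against $\ffi$ cancels the Taylor polynomial and leaves only the Lagrange remainder, yielding $|f(\ffi^\la_x)|\lt C\la^n\|f\|_{\C^n}$; the first summand $\sup_{\psi\in\ms B^0}|f(\psi)|$ in the definition of $|f|_{\ms Z^\al}$ is controlled trivially by $\|f\|_\infty \|\psi\|_{L^1}$.

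The only genuinely delicate point is the integer case, since the paper itself warns that $\ms Z^n$ is a distinct space from $\C^n$. However, as long as the vanishing-moment convention used in $\ms B^0_n$ is the same as the one effectively invoked in the proof of Lemma \ref{h-e-1111} and in display \rf{taylor-1111} (killing constants together with monomials of degree up to $n-1$), the Taylor estimate goes through verbatim for any smooth $f$, and no additional machinery is required beyond what is already developed in the preceding appendix lemmata.
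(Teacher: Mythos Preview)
Your proof is correct and follows essentially the same chain of inclusions as the paper: the paper also uses Lemma \ref{fh2222} for non-integer $\al$, asserts $\C^\infty\subset\ms Z^n$ for $n\in\Nnu$ (without spelling out the Taylor argument you supply), and then obtains the reverse inclusion by restricting to non-integer $\al>0$ and invoking $\ms Z^\al=\C^\al\subset\C^{[\al]}$. Your additional justification of the integer case via the Taylor expansion \rf{taylor-1111} is a welcome elaboration of a step the paper leaves implicit; note that $\ms B^0_n$ actually annihilates monomials up to degree $n$ (not just $n-1$), so your remainder estimate is even a power of $\la$ better than required.
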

\begin{proof} 
By Lemma \ref{fh2222} and since $\C^\infty\sub \ms Z^n$, $n\in \Nnu$,
it follows that $\C^\infty \sub \medcap_{\al\in\Rnu} \, \ms Z^\al$.
On the other hand, by Lemmata \ref{kp1111} and \ref{fh2222},
$\medcap_{\al\in\Rnu} \, \ms Z^\al
\sub \medcap_{\al\notin \Nnu, \al>0} \, \C^\al 
\sub \medcap_{\al\notin \Nnu, \al>0} \, \C^{[\al]} =
\C^\infty$. It remains to notice that
$\medcap_{\al\in\Rnu} \, \Zs^\al \subset \medcap_{\al\in\Rnu} \, \ms Z^\al$
and that $\C^\infty \subset \Zs^\al$ for all $\al\in\Rnu$.
\end{proof}
\begin{lemma}
\label{lem:12.4}
Given a distribution $f \in \mathcal{D}'(\Tr^d)$ and $\alpha \in (-\infty, 0]$, the following conditions are equivalent.
\begin{enumerate}
    \item $f$ is in $\mc Z^\alpha(\Tr^d)$.
    %    \item There is an integer $r > -\alpha$ such that (12.1) holds with $\mathfrak{B}_{r_\alpha}$ replaced by $\mathfrak{B}_r$.
    \item There exists a test function $\varphi \in \mathcal{D}(\Tr^d)$ with $\int \varphi \neq 0$ such that
    \aa{
        |f(\varphi_x^\la)| \lesssim \la^\alpha
} uniformly in $x\in\Tr^d$  and in $\la \in [0,1]_\D$.
%$\varepsilon \in \{2^{-k}\}_{k \in \mathbb{N}} \subseteq (0, 1]$.
\end{enumerate}
Moreover, there is a constant $\mathfrak{b}_{\varphi,\alpha,d}>0$
such that the H\"older-Zygmund norm $\|f\|_{\mc Z^\alpha}$ 
can be estimated using an arbitrary 
test function $\varphi \in \mathcal{D}(B_1(0))$ with $\int \varphi \neq 0\!:$ 
\begin{equation}
\label{eq:12.3}
    \|f\|_{\mc Z^\alpha} \lt \mathfrak{b}_{\varphi,\alpha,d} \sup_{x \in \Tr^d, \la \in (0,1]} \frac{|f(\varphi_x^\la)|}{\la^\alpha}.
\end{equation}
%where $\mathfrak{b}_{\varphi,\alpha,d}>0$ is a constant. 
\end{lemma}
\begin{proof}
For the proof, see \cite[Theorem 12.4]{cz2020}, whose formulation
is adapted to our spatially periodic setting.
\end{proof}
\section{Singular kernels}
\begin{lemma}%{pro}
\lb{fh1111}
Let $\beta>0$.
For a $\beta$-regularizing kernel $\mc K$ 
{\rm (}see \cite[Definition 2.3]{bcz2023-1}
or \cite[Assumption 5.1]{hairer} {\rm )},
the map $f\mto \mc K\ast f$ is continuous from 
$\ms Z^\al(\Rnu^d)$ to $\ms Z^{\al+\beta}(\Rnu^d)$
for every $\al\in\Rnu$. Moreover, 
\aaa{
\lb{tdK-1111}
|\mc K\ast f|_{\beta+\al, \mcc K} \ls |f|_{\al, \td {\mcc K}},
}
where $\mcc K\sub\Rnu^d$ is a compact and $\td {\mcc K}
\supset \mcc K$ is another compact strictly containing $\mcc K$.
 %where $\Zs^\al$ is the H\"older-Zygmund space.
\end{lemma}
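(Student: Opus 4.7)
The plan is to follow the classical Schauder estimate for $\beta$-regularizing kernels in the H\"older-Zygmund scale, as in the prototype arguments of \cite[Lemma 5.19]{hairer} or \cite[Section 2]{bcz2023-1}. I would work entirely with the $\ms Z^\al(\Rnu^d)$ framework (on the compacts $\mcc K \subset \td{\mcc K}$) and only at the very end invoke Remark \ref{intersection-1111} to translate to the periodic setting; it suffices therefore to prove the local bound \rf{tdK-1111}.

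The starting point is the scale decomposition $\mc K = \sum_{n \geq 0} \mc K_n$ guaranteed by the definition of a $\beta$-regularizing kernel: each $\mc K_n$ is smooth, supported in a ball $B_{2^{-n}}(0)$, annihilates polynomials up to a fixed order that we may take larger than $[\al + \beta] + 1$, and satisfies the size estimates $\|D^k \mc K_n\|_\infty \ls 2^{n(d + |k| - \beta)}$ for all multiindices $k$ up to sufficiently high order. To bound $|\mc K \ast f|_{\al + \beta, \mcc K}$, I test $\mc K \ast f$ against a rescaled test function $\ffi^\la_x$ with $x \in \mcc K$ and $\la \in (0,1]$, and use the duality
\[
(\mc K \ast f)(\ffi^\la_x) \;=\; \sum_{n \geq 0} f\bigl(\psi_{n,x}^\la\bigr), \qquad \psi_{n,x}^\la := \check{\mc K}_n \ast \ffi^\la_x,
\]
where $\check{\mc K}_n(y) = \mc K_n(-y)$.

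The key step is to split the sum at the dyadic scale $n_0$ determined by $2^{-n_0} \sim \la$. For $n \gt n_0$ (sub-$\la$ scales), $\psi_{n,x}^\la$ is morally a test function at scale $\la$ supported in a slight enlargement of $\supp \ffi^\la_x \subset \td{\mcc K}$; after normalizing by the correct power of $2^{-n\beta}$ obtained from the size bounds on $\mc K_n$, it becomes an admissible rescaled test function. When $\al + \beta \gt 0$, the moment cancellation property of $\mc K_n$ forces $\psi_{n,x}^\la$ into the class $\ms B^0_{\al + \beta}$ (up to a harmless constant), which is exactly what is needed to pair it with $f$ using the $|\cdot|_{\al, \td{\mcc K}}$ bound. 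For $n \lt n_0$ (super-$\la$ scales), the roles are reversed: $\psi_{n,x}^\la$ is a rescaled test function at scale $2^{-n}$, and one uses $\|D^k \mc K_n\|_\infty \ls 2^{n(d+|k|-\beta)}$ directly together with the $\ms Z^\al$ bound at scale $2^{-n}$. In both regimes the resulting series are geometric in $2^{(n - n_0)\sigma}$ for some $\sigma > 0$ whose sign depends on the regime, and summation produces the desired factor $\la^{\al + \beta} \, |f|_{\al, \td{\mcc K}}$.

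The main obstacle I expect is the bookkeeping in the non-generic cases. First, when $\al$ or $\al + \beta$ is an integer, Definition \ref{HZ1111} of $\ms Z^\al$ disagrees with the usual H\"older-Zygmund space, and one must decide which of the two exponent regimes controls the estimate; this is handled by choosing the order $r$ of test functions liberally and invoking Proposition \ref{eq1111} to change $r$ freely, together with Lemma \ref{h-e-1111} to switch between $|\cdot|_{\ms Z^\al}$ and $|\cdot|_{\td{\ms Z}^\al}$. Second, one needs to choose the enlargement $\td{\mcc K} \supset \mcc K$ large enough to absorb the $O(1)$ spread caused by summing convolutions over all scales $n \geq 0$ (only finitely many of which have non-negligible support); this is cosmetic but must be made explicit. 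With these technicalities out of the way, the summation gives \rf{tdK-1111}, and continuity of the map $\mc K \ast \cdot : \ms Z^\al(\Rnu^d) \to \ms Z^{\al + \beta}(\Rnu^d)$ follows by a standard covering of $\Rnu^d$ by compacts together with the translation invariance of both norms.
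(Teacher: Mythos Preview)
Your sketch is the standard multilevel Schauder argument and is essentially correct; it is exactly the content of \cite[Theorem 14.17]{FH20} and \cite[Theorem 2.13]{bcz2023-1}. The paper, however, does not reproduce this argument at all: its entire proof of Lemma \ref{fh1111} consists of citing those two references. So the difference is not one of method but of presentation --- the paper treats the result as known and defers to the literature, whereas you outline the actual dyadic-splitting proof found there.

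Two minor remarks on your write-up. First, Lemma \ref{fh1111} is a statement purely about $\ms Z^\al(\Rnu^d)$; the translation to the torus happens later, in Lemma \ref{rk1111}, so your invocation of Remark \ref{intersection-1111}, Proposition \ref{eq1111}, and Lemma \ref{h-e-1111} (all periodic statements) is out of place here --- you would need their $\Rnu^d$ analogues, which are in the cited references but not in this paper. Second, your concern about integer exponents is a genuine technical point in the full $\Rnu^d$ theory, but since the paper only ever applies this lemma with non-integer $\al$ and $\al+\beta$, it is not something the paper itself addresses.
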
%{pro}
\begin{proof}
For the proof see, e.g.  \cite[Theorem 14.17]{FH20}.
The bound \rf{tdK-1111} is obtained in the proof of Theorem 2.13 in 
\cite{bcz2023-1}. 
\end{proof}
In the periodic case, the above statement implies the following result.
\begin{lemma}
\lb{rk1111}
Let $\beta>0$ and
let  $\mc K$ be a $\beta$-regularizing kernel on a $d$-dimensional torus $\Tr^d$.
Then, for every $\al\in\Rnu$, the map 
\aaa{
\lb{map1}
\ms Z^\al(\Tr^d) \to \ms Z^{\al+\beta}(\Tr^d), \quad f\mto \mc K\ast f
}
is linear and continuous. Moreover,  for every $\al\in\Rnu$, the map \rf{map1} maps
$\Zs^\al(\Tr^d)$ to $\Zs^{\al+\beta}(\Tr^d)$. In particular,
\aa{
|\mc K\ast f|_{\ms Z^{\beta+\al}} \ls |f|_{\ms Z^\al}
\quad \text{and} \quad |\mc K\ast f|_{\beta+\al} \ls |f|_{\al}.
}
\end{lemma}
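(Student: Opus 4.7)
The plan is to reduce the periodic statement to the Euclidean one established in Lemma \ref{fh1111}, using the identification from Remark \ref{intersection-1111} and the norm equivalences proved earlier.

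First, I would fix $\alpha \in \mathbb{R}$ with $\alpha, \alpha+\beta \notin \mathbb{N}$ and take $f \in \ms Z^\al(\Tr^d)$. By Lemma \ref{h-e-1111}, the norm $|f|_{\ms Z^\al}$ is equivalent to $|f|_{\td{\ms Z}^\al}$, and by Remark \ref{intersection-1111}, the latter coincides with the restriction of the seminorm $|\cdot|_{\alpha,\Tr^d}$ on $\ms Z^\al(\Rnu^d)$ to periodic distributions. Viewing $f$ as a periodic element of $\mc D'(\Rnu^d)$ belonging to $\ms Z^\al(\Rnu^d)$, I apply Lemma \ref{fh1111} with a compact $\mcc K = \Tor^d$ and a larger compact $\td{\mcc K}\supset \Tor^d$; this yields
\[
|\mc K \ast f|_{\beta+\al,\Tr^d} \ls |f|_{\al,\td{\mcc K}}.
\]
Since $f$ is periodic, the local seminorm $|f|_{\al,\td{\mcc K}}$ is bounded in terms of $|f|_{\al,\Tr^d}$ (by covering $\td{\mcc K}$ with finitely many translates of the fundamental domain and using periodicity). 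Because $\mc K \ast f$ is itself periodic, the Euclidean bound transfers back through Remark \ref{intersection-1111} and Lemma \ref{h-e-1111} to give $|\mc K\ast f|_{\ms Z^{\al+\beta}} \ls |f|_{\ms Z^\al}$.

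Next, I would handle the exceptional exponents where $\al\in\Nnu$ or $\al+\beta\in\Nnu$. For such $\alpha$, I would pick $\alpha' < \alpha$ slightly smaller with $\alpha',\alpha'+\beta\notin\Nnu$ and combine the already-proved inequality for $\alpha'$ with Corollary \ref{cr-he} and Lemma \ref{kp1111} (or the embedding $\ms Z^\al \hookrightarrow \ms Z^{\al'}$) to transfer the bound; alternatively one applies Lemma \ref{fh1111} directly at $\alpha$ since the Euclidean statement there is valid for all $\alpha\in\Rnu$, then uses Corollary \ref{cr-he}, which gives $|\cdot|_{\td{\ms Z}^n} \ls |\cdot|_{\ms Z^n}$, and the reverse inequality follows by the same compact covering argument used above (which does not rely on $\alpha\notin\Nnu$, only on the representation via test-functions supported in $B_1(0)$).

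For the little H\"older-Zygmund statement, I would use the definition $\Zs^\al(\Tor^d) = \ovl{\C^\infty(\Tor^d)}^{\,\ms Z^\al}$. Taking $f\in\Zs^\al$ and a sequence $f_n\in \C^\infty$ with $f_n\to f$ in $\ms Z^\al$, the continuity of $f\mapsto \mc K \ast f$ from $\ms Z^\al$ to $\ms Z^{\al+\beta}$ gives $\mc K\ast f_n \to \mc K\ast f$ in $\ms Z^{\al+\beta}$. Since $\mc K \ast f_n \in \ms Z^{\nu}$ for every $\nu$, Lemma \ref{smooth} gives $\mc K\ast f_n \in \C^\infty(\Tor^d)$, so the limit $\mc K\ast f$ lies in $\Zs^{\al+\beta}$. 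The restriction bound $|\mc K\ast f|_{\al+\beta} \ls |f|_\al$ follows from the one on $\ms Z^\al$.

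The main technical obstacle is the transfer step from the Euclidean bound \eqref{tdK-1111}, which is stated with seminorms on distinct compacts $\mcc K \subsetneq \td{\mcc K}$, to a clean bound expressed entirely in terms of a periodic norm; this rests on using periodicity of $f$ to control $|f|_{\al,\td{\mcc K}}$ by $|f|_{\al,\Tor^d}$, and the analogous control for $\mc K\ast f$ that exploits the periodicity of $\mc K \ast f$. A minor additional care is needed to verify that $\mc K\ast f_n$ is smooth rather than merely of arbitrary H\"older regularity when $f_n\in\C^\infty$; this follows since $\mc K$ gains $\beta$ derivatives at every level and Lemma \ref{smooth} characterizes $\C^\infty$ as the intersection of all H\"older-Zygmund spaces.
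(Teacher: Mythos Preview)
Your proposal is correct and follows essentially the same route as the paper: both reduce to the Euclidean Schauder estimate of Lemma \ref{fh1111} via the norm equivalence of Lemma \ref{h-e-1111} and Remark \ref{intersection-1111}, and then use periodicity to replace the enlarged-compact seminorm $|f|_{\al,\td{\mcc K}}$ by $|f|_{\al,\Tr^d}$. Your treatment is more explicit than the paper's in two respects---the integer-exponent case (which the paper leaves implicit) and the argument that $\mc K\ast f_n\in\C^\infty$ via Lemma \ref{smooth} (the paper simply asserts $\mc K\ast f\in\C^\infty$ for $f\in\C^\infty$)---but these are elaborations of the same underlying argument, not a different approach.
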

\begin{proof}
By Lemma \ref{h-e-1111}, the norms $|\cdot|_{\ms Z^\al}$ (in the sense of Definition \ref{HZ1111}) 
and $|\cdot|_{\al,\Tor}$ (in the sense of \cite[Definition 2.1]{bcz2023-1})
are equivalent. We remark that $|\cdot|_{\al,\Tr^d}$ 
and $|\cdot|_{\td{\ms Z}^\al}$
is the same norm, see Definition \ref{C-3} and Remark \ref{c5}. 
On the other hand, in Definition \ref{HZ1111},
$\ffi^\la_x$ is a periodic function on $\Tr^d$.
Therefore, the supremum in $x$ running over an enlargement of $\Tr^d$ equals to
the supremum over $\Tr^d$. 

Finally, since $\mc K\ast f  \in C^\infty$  for every $f  \in C^\infty$, 
we infer that  \rf{map1} maps
$\Zs^\al(\Tr^d)$ to $\Zs^{\al+\beta}(\Tr^d)$.
\end{proof}
\begin{lemma}
\lb{Gconv-1111}
The Green function $G$ for the operator
$-2\pl_{\bar z}$ is a $1$-regularizing kernel on $\Tor$. Moreover,
for every $\al\in\Rnu$, the map 
\aaa{
\lb{eqn-conv-alpha}
\ms Z^\al(\Tor) \ni f\mto G\ast f \in  \ms Z^{\al+1}(\Tor),
}
 is linear and continuous,
and maps $\Zs^\al(\Tor)$  to $\Zs^{\al+1}(\Tor)$. 
In particular,
\aaa{
\lb{Gcnv}
|G\ast f|_{\ms Z^{\al+1}} \ls |f|_{\ms Z^\al} \quad \text{and} \quad
|G\ast f|_{\al+1} \ls |f|_\al.
}
\end{lemma}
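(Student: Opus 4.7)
The plan is to reduce everything to Lemma \ref{rk1111} applied with $\beta=1$; the only non-formal work is to verify that $G$ is a $1$-regularizing kernel on $\Tor$ in the sense of \cite[Definition 2.3]{bcz2023-1}. Once that is established, the continuity of the map \rf{eqn-conv-alpha} and the bound $|G\ast f|_{\ms Z^{\al+1}} \ls |f|_{\ms Z^\al}$ follow immediately from Lemma \ref{rk1111}.

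To verify the $1$-regularizing property, I would first recall that $G = \pl_1 K - i\pl_2 K$, where $K$ is the Green function of the Laplacian on $\Tor$ with zero mean. Since $K$ differs from its $\Rnu^2$ counterpart $-\frac1{2\pi}\log|x|$ only by a smooth correction, one has $|D^k G(x)| \ls |x|^{-1-|k|}$ near $0$, with $G$ smooth away from the origin. Next I would perform a standard dyadic decomposition $G = \sum_{n\in\Nnu_0} G_n + G_\infty$, where $G_n(x) = G(x)\bigl(\rho(2^n x) - \rho(2^{n+1} x)\bigr)$ for a suitable radial cutoff $\rho$ supported near the origin, so that $\supp G_n$ sits in an annulus $\{|x|\sim 2^{-n}\}$ and $G_\infty$ is smooth. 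On each annulus, the estimate $|D^k G(x)| \ls |x|^{-1-|k|}$ together with the derivative bounds on the cutoff translates into $\|D^k G_n\|_\infty \ls 2^{n(1+|k|)} = 2^{n(d-\beta+|k|)}$ with $d=2$ and $\beta=1$, which is exactly the requirement for $G$ to be $1$-regularizing.

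The next step is to upgrade the conclusion from $\ms Z^\al(\Tor)$ to the little H\"older-Zygmund spaces $\Zs^\al(\Tor)$. The key observation is that if $f\in \C^\infty(\Tor)$, then $\Delta (K\ast f) = -f + [f]$, so by elliptic regularity $K\ast f \in \C^\infty(\Tor)$, whence $G\ast f = 2\pl_z(K\ast f) \in \C^\infty(\Tor) \sub \Zs^{\al+1}(\Tor)$, the last inclusion being provided by Lemma \ref{smooth}. Since $\Zs^\al(\Tor)$ is by definition the closure of $\C^\infty(\Tor)$ in $\ms Z^\al(\Tor)$, and the convolution with $G$ is continuous in these norms by the first step, it extends to a map from $\Zs^\al(\Tor)$ into the closure of $\C^\infty(\Tor)$ in $\ms Z^{\al+1}(\Tor)$, namely $\Zs^{\al+1}(\Tor)$. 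The bound $|G\ast f|_{\al+1} \ls |f|_\al$ is then just the restriction of $|G\ast f|_{\ms Z^{\al+1}} \ls |f|_{\ms Z^\al}$ to this closed subspace.

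The main obstacle I anticipate is the dyadic decomposition step: one must match exactly the quantitative bookkeeping of the definition in \cite[Definition 2.3]{bcz2023-1}, in particular keeping track of how many derivatives of the cutoff $\rho$ may fall on $G$, and accommodating the smooth far-field piece $G_\infty$ on the compact $\Tor$ (it is irrelevant for regularization but must be incorporated into the definition). Once this verification is complete, the remainder of the proof reduces to a direct application of Lemma \ref{rk1111} combined with a standard density argument based on Lemma \ref{smooth}.
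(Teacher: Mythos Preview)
Your proposal is correct and follows essentially the same route as the paper: derive the pointwise bounds $|D^k G(x)| \ls |x|^{-1-|k|}$ from the structure of $K$, conclude that $G$ is $1$-regularizing, and then invoke Lemma~\ref{rk1111}. The only difference is packaging: the paper obtains the derivative bounds by citing Lemma~\ref{K-bounds-2222} and then delegates the dyadic bookkeeping to \cite[Lemma~2.9]{bcz2023-1} rather than carrying it out by hand, and the density argument for the little spaces is already absorbed into Lemma~\ref{rk1111}.
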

For the proof, we need the following result. 
\begin{lemma}
\lb{K-bounds-2222}
The Green function $K$ for the Laplacian $-\lap$ on $\Tor$
 is in $\C^\infty(\Tor\setminus\{0\})$. 
%Its behaviour in 0 is given by |G(x)|  C(? log |x| + 1)
For the derivatives of $D^\beta K$, where $\beta$ is a multiindex,
 we have the bounds
\aa{
|D^\beta K(x)| \lt C_n |x|^{-n}, \quad
|\beta|=n, \quad n=1,2,\ldots,
}
where $C_n>0$ is a constant depending only on $n$.
\end{lemma}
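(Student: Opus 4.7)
The plan is to split the analysis of $K$ into a singular part captured by the Euclidean fundamental solution of the Laplacian and a smooth remainder, and then to exploit homogeneity of the singular part to get the claimed pointwise derivative bounds. Throughout, I will use that $K$ is the zero-mean solution of
\[
-\Delta K = \delta_{0} - \frac{1}{|\mathbb{T}^{2}|} \quad \text{on } \mathbb{T}^{2}.
\]

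First, I would establish that $K\in C^{\infty}(\mathbb{T}^{2}\setminus\{0\})$. Away from the origin, $\delta_{0}=0$, so the distribution $K$ satisfies the equation $-\Delta K = -\frac{1}{|\mathbb{T}^{2}|}$ on the open set $\mathbb{T}^{2}\setminus\{0\}$. Since the right-hand side is smooth there and $-\Delta$ is elliptic, interior elliptic regularity immediately yields $K\in C^{\infty}(\mathbb{T}^{2}\setminus\{0\})$.

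Second, I would obtain a local decomposition near the origin. Let $U$ be a small ball around $0$ in which a single chart of $\mathbb{T}^{2}$ is identified with a subset of $\mathbb{R}^{2}$, and let $K_{0}(x):=-\frac{1}{2\pi}\log|x|$ be the free-space fundamental solution of $-\Delta$ on $\mathbb{R}^{2}$, which satisfies $-\Delta K_{0}=\delta_{0}$ distributionally. Then $h := K - K_{0}$ satisfies, on $U$,
\[
-\Delta h \;=\; -\Delta K + \Delta K_{0} \;=\; \bigl(\delta_{0} - \tfrac{1}{|\mathbb{T}^{2}|}\bigr)-\delta_{0} \;=\;-\tfrac{1}{|\mathbb{T}^{2}|}.
\]
Hence $-\Delta h$ is smooth on $U$, and again by interior elliptic regularity $h\in C^{\infty}(U)$. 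This yields the decomposition $K(x)= -\frac{1}{2\pi}\log|x| + h(x)$ on $U$ with $h$ smooth.

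Third, I would conclude the bounds. For $|\beta|=n\ge 1$, an induction on $n$ (or direct inspection) shows that $D^{\beta}\log|x|$ is a sum of terms of the form $P(x)/|x|^{2m}$ with $P$ a polynomial of degree $2m-n$, so that the whole expression is positively homogeneous of degree $-n$; consequently $|D^{\beta}\log|x||\le c_{n}|x|^{-n}$ on $U\setminus\{0\}$. Combining with the uniform bound $\|D^{\beta}h\|_{L^{\infty}(U)}<\infty$ gives $|D^{\beta}K(x)|\le C_{n}|x|^{-n}$ on $U\setminus\{0\}$. Outside a neighborhood of $0$, the function $K$ is smooth and periodic, hence all $|D^{\beta}K|$ are bounded, and since $|x|^{-n}$ is bounded below on $\mathbb{T}^{2}\setminus U$, the same estimate survives (after enlarging $C_{n}$). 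The only mildly delicate point is keeping track of the constant $C_{n}$ as $n$ grows, but since the homogeneous derivative bound is elementary and $h$ is smooth on a compact set, no serious obstacle arises. I therefore expect no essential difficulty beyond careful bookkeeping.
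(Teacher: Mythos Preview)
Your argument is correct: the decomposition $K = -\tfrac{1}{2\pi}\log|x| + h$ with $h$ smooth near the origin, combined with the homogeneity of $D^{\beta}\log|x|$ and the trivial bound away from $0$, is the standard route to these estimates. The paper itself does not prove the lemma but simply cites \cite[Proposition~B.1]{bfm2016}; your sketch is essentially the argument one would expect to find there (or reconstruct from it), so there is no substantive divergence to discuss.
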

\begin{proof}
For the proof see \cite[Proposition B.1]{bfm2016}.
\end{proof}
\begin{proof}[Proof of Lemma \ref{Gconv-1111}]
Since $G=2\pl_z K$, by Lemma \ref{K-bounds-2222},
\aa{
|D^\beta G(x)| \lt C_{n+1} |x|^{-(n+1)}, \quad
|\beta|=n, \quad n=0,1,2,\ldots.
}
In particular, 
\aa{
|G(x)| \lt \frac{C_1}{|x|}. 
}
By \cite[Lemma 2.9]{bcz2023-1}, $G(x-y)$  
is a $1$-regularizing kernel on $\Tor$.
Therefore, the statement of the lemma follows
from Lemma \ref{rk1111}.
% By Lemma \ref{rk1111}, the map \rf{eqn-conv-alpha}
%  %the map $\ms Z^\al(\Tor) \to \ms Z^{\al+1}(\Tor)$, $f\mto G\ast f$, 
% is continuous for all $\al\in\Rnu$ and 
% maps $\Zs^\al(\Tor)$  to $\Zs^{\al+1}(\Tor)$. 
% By the same lemma, \rf{Gcnv} is fulfilled. 
\end{proof}
\begin{lemma}
\lb{laplace-1111}
Let $K$ be the Green function of the Laplacian $-\lap$ on $\Tor$. 
Then, for every $\al\in\Rnu$, the map 
\aaa{
\label{map2}
\ms Z^\al(\Tor) \to \ms Z^{\al+2}(\Tor), \quad 
f\mto K\ast f
}
is linear and continuous. Moreover, for every $\al\in\Rnu$,  the map \rf{map2} maps 
$\Zs^\al(\Tor)$ to $\Zs^{\al+2}(\Tor)$. In particular,
\aa{
|K\ast f|_{\ms Z^{\al+2}} \ls |f|_{\ms Z^\al} \quad \text{and} \quad
|K\ast f|_{\al+2} \ls |f|_\al.
}
\end{lemma}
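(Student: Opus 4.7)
The plan is to reduce the statement to two applications of Lemma \ref{Gconv-1111} via the factorization $\lap = 4\pl_z\pl_{\bar z}$. Since $K$ is real-valued, $\ovl G = 2\pl_{\bar z} K$ is the complex conjugate of $G$, and a verbatim repetition of the proof of Lemma \ref{Gconv-1111} (the pointwise bounds of Lemma \ref{K-bounds-2222} translate unchanged into $|D^\beta \ovl G(x)|\ls |x|^{-(n+1)}$ for $|\beta|=n$) shows that convolution with $\ovl G$ is also a continuous map $\ms Z^\al(\Tor)\to \ms Z^{\al+1}(\Tor)$ and $\Zs^\al(\Tor)\to\Zs^{\al+1}(\Tor)$. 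Alternatively, this follows by conjugation: $\ovl G\ast f = \ovl{G\ast\ovl f}$, and complex conjugation acts as an isometry on the spaces in question.

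Next I would establish the key identity
\aa{
K\ast f = -\ovl G\ast G\ast f + C_f,
}
where $C_f\in\Cnu$ is a constant depending linearly on $f$ (namely, the one enforcing zero mean on $\Tor$). For $f\in \C^\infty(\Tor)$ this follows by applying $\lap = 4\pl_z\pl_{\bar z}$ to both sides: Lemma \ref{Lav1111}, first in its stated form applied to $g = G\ast f$ and then in its complex-conjugate version applied to $\ovl G\ast (G\ast f)$, yields $\lap(\ovl G\ast G\ast f) = f - [f] = -\lap(K\ast f)$, so the two sides differ by a harmonic function on $\Tor$, which must be constant. The identity then extends to all $f\in \Zs^\al(\Tor)$, for any $\al\in\Rnu$, by density of $\C^\infty$ in $\Zs^\al$ and continuity of both $K\ast\fdot$ and $\ovl G\ast G\ast\fdot$ viewed as operators into $\mc D'(\Tor)$.

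With the identity in hand, the conclusion is immediate: by Lemma \ref{Gconv-1111} and its analogue for $\ovl G$, the composition $f\mto \ovl G\ast G\ast f$ is continuous $\ms Z^\al\to \ms Z^{\al+1}\to \ms Z^{\al+2}$, and likewise between the little H\"older-Zygmund counterparts, yielding the norm bound $|\ovl G\ast G\ast f|_{\al+2}\ls |f|_\al$; the constant $C_f$ contributes nothing to the H\"older-Zygmund seminorm and is harmlessly absorbed. Since $K\ast \C^\infty\sub \C^\infty$, the mapping property for the little H\"older-Zygmund spaces follows at once.

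The main obstacle I foresee is justifying the distributional identity $K\ast f = -\ovl G\ast G\ast f + C_f$ for rough $f\in \Zs^\al(\Tor)$ of potentially very negative regularity: one must verify that the double convolution is well defined stepwise (guaranteed by Lemma \ref{Gconv-1111}), and that the density argument extending the identity from $\C^\infty$ commutes with passage to the limit in $\mc D'(\Tor)$. Both checks are routine once the continuity of the individual convolutions with $G$, $\ovl G$ and $K$ between the relevant distribution spaces has been established.
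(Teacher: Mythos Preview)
Your approach is essentially the paper's: write $K\ast f = -\ovl G\ast(G\ast f)$ and apply Lemma~\ref{Gconv-1111} (and its conjugate version) twice. The only difference is that the paper observes the constant $C_f$ is actually zero---since $\int_{\Tor} G\ast f = 0$ and $K$ is the mean-zero Green function, both sides of the identity have vanishing mean---which avoids your slightly inaccurate claim that constants ``contribute nothing'' to the norm (for $\al+2\gt 0$ the norm $|\cdot|_{\ms Z^{\al+2}}$ does detect constants through its first term \rf{HZ1}).
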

\begin{proof}
Since $G=2\pl_z K$, we have  
\aa{
G\ast f = 2\pl_z (K\ast f).
}
Since $\int_\Tor G\ast f = 0$, we have
\aa{
K \ast f = - \ovl G \ast (G\ast f).
}
By Lemma \ref{Gconv-1111}, 
\aa{
|K\ast f|_{\ms Z^{\al+2}} = |\ovl G \ast (G\ast f)|_{\ms Z^{\al+2}}
\ls |(G\ast f)|_{\ms Z^{\al+1}} \ls  |f|_{\ms Z^\al}.
}
Finally, we notice that the map \rf{map2} maps 
$\Zs^\al(\Tor)$ to $\Zs^{\al+2}(\Tor)$
by the same argument as in Lemma \ref{rk1111}.
\end{proof}
\section{Young products in  little H\"older-Zygmund spaces}
\begin{lemma}
\lb{young}
Fix  $\al > 0$ and $\beta\lt 0$ such that $\al+\beta > 0$. Then, there exists a unique bilinear continuous map 
\begin{equation}\label{eqn-multiplication}
    \Zs^\al(\Tr^d) \x \Zs^\beta(\Tr^d) \ni  (f,g)\mto  f\cdot g \in  \Zs^\beta(\Tr^d)
    \end{equation}
which extends the usual product  
\aaa{
\lb{usual-1111}
\C^\infty(\Tr^d) \times \C^\infty(\Tr^d)  \ni  (f,g)\mto  f\cdot g \in \C^\infty(\Tr^d).
}
Furthermore, for all $f\in \Zs^\al(\Tr^d)$ and 
$g\in \Zs^\beta(\Tr^d)$,
\aaa{
\lb{Y-torus-1111}
|f\cdot g|_{\beta} \ls |f|_{\al} |g|_{\beta}.
}
\end{lemma}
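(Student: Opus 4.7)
The plan is to first establish the bilinear estimate $|fg|_\beta \ls |f|_\al |g|_\beta$ on smooth functions, and then to extend the product by density, exploiting the completeness of $\ms Z^\beta$ together with the fact that $\Zs^\al$ and $\Zs^\beta$ are by definition the closures of $\C^\infty$ in $\ms Z^\al$ and $\ms Z^\beta$ respectively. For $\al, \beta \notin \Nnu$, Lemma \ref{fh2222} reduces the smooth estimate to the classical H\"older-Zygmund product estimate
\[
\|fg\|_{\C^\beta(\Tr^d)} \ls \|f\|_{\C^\al(\Tr^d)} \|g\|_{\C^\beta(\Tr^d)},
\]
which can be derived from Bony's paraproduct decomposition $fg = T_f g + T_g f + R(f,g)$: the paraproduct $T_f g$ is controlled in $\C^\beta$ using only $\|f\|_\infty \ls |f|_\al$, while the symmetric paraproduct $T_g f$ and the resonant term $R(f,g)$ are both controlled in $\C^{\al+\beta}$, the latter precisely because $\al+\beta > 0$. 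Alternatively, one can argue directly from Definition \ref{HZ1111}: for $\phi \in \ms B^r$ with $r = [-\beta+1]$, decompose
\[
(fg)(\phi^\la_x) = f(x)\, g(\phi^\la_x) + \int (f(y) - f(x))\, g(y)\, \phi^\la_x(y)\, dy,
\]
and estimate the two pieces separately: the first by $\|f\|_\infty |g|_\beta \la^\beta$, and the second by a delicate bound of order $|f|_\al |g|_\beta \la^{\al+\beta}$ using the $\al$-H\"older regularity of $f - f(x)$.

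Once the smooth estimate is in hand, the extension proceeds by a standard density argument. For $(f,g) \in \Zs^\al \x \Zs^\beta$, pick $f_n, g_n \in \C^\infty(\Tr^d)$ with $f_n \to f$ in $\ms Z^\al$ and $g_n \to g$ in $\ms Z^\beta$. The identity $f_n g_n - f_m g_m = (f_n - f_m) g_n + f_m (g_n - g_m)$, combined with the smooth bilinear estimate, shows that $(f_n g_n)$ is Cauchy in $\ms Z^\beta$; define $f \cdot g$ as its limit. A standard check yields independence of the approximating sequences, the bilinear estimate \eqref{Y-torus-1111} passes to the limit, and $f \cdot g$ belongs to $\Zs^\beta$ as an $\ms Z^\beta$-limit of smooth functions. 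Uniqueness of the bilinear continuous extension is then immediate from density of $\C^\infty$.

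The main obstacle is the bilinear estimate on smooth functions. Although classical in H\"older-Zygmund (or Besov) theory, a self-contained proof requires either a full Littlewood-Paley/paraproduct framework or a delicate direct argument via test functions. In the direct approach, the difficulty is that the function $y \mapsto (f(y) - f(x))\, \phi^\la_x(y)$ has only H\"older regularity $\al$, which is insufficient to treat it as a test function in $\ms B^r$ when $r > \al$; one would need a mollification or a Taylor-type expansion to gain enough smoothness while still recovering the correct scaling factor $\la^{\al+\beta}$.
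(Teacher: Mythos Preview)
Your overall architecture matches the paper's: reduce to the bilinear estimate $|fg|_\beta \lesssim |f|_\al |g|_\beta$ on $\C^\infty(\Tr^d)$, then extend by density using that $\Zs^\al,\Zs^\beta$ are closures of $\C^\infty$. The paper states this reduction in one sentence and then, rather than proving the smooth estimate from scratch, imports it from \cite[Theorem~14.1]{cz2020}, which gives $|f\cdot g|_{K,\beta}\ls |f|_{K_4,\al}|g|_{K_4,\beta}$ for smooth functions on $\Rnu^d$ (with $K_4$ an enlargement of the compact $K$); the periodic transfer is done via Lemma~\ref{h-e-1111} and the periodicity trick from Lemma~\ref{rk1111}, and the borderline case $\al\in\Nnu$ is patched using Corollary~\ref{cr-he}.

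Your paraproduct route is a genuinely different implementation of the key step. It is correct and in some sense more self-contained, since Bony's decomposition on $\Tr^d$ (via Littlewood--Paley on Fourier series) directly yields the Besov estimate $\|fg\|_{B^\beta_{\infty,\infty}}\ls \|f\|_{B^\al_{\infty,\infty}}\|g\|_{B^\beta_{\infty,\infty}}$ without passing through $\Rnu^d$ or enlarging compacts; the identification with the paper's norms then comes from Lemma~\ref{fh2222}. What the paper's route buys is brevity and consistency with the Hairer/reconstruction framework already used elsewhere in the appendix, at the cost of relying on an external reference whose proof is itself nontrivial. Your direct test-function decomposition $(fg)(\ffi^\la_x)=f(x)\,g(\ffi^\la_x)+g\bigl((f-f(x))\ffi^\la_x\bigr)$ is in fact the germ of the argument \cite{cz2020} uses, and the obstacle you correctly flag (that $(f-f(x))\ffi^\la_x$ lacks the $r$ derivatives needed to test against $g\in\ms Z^\beta$) is exactly what their multilevel reconstruction machinery is designed to overcome; the paper avoids redoing this by citing the finished result. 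One small point: your reduction via Lemma~\ref{fh2222} only covers $\al\notin\Nnu$; you should note, as the paper does, that integer $\al$ follows by embedding $\ms Z^\al\hookrightarrow\ms Z^{\al'}$ for any non-integer $\al'\in(-\beta,\al)$.
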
%{lem}
\begin{proof}
By the definition of the spaces $\Zs^\al(\Tr^d)$  and   $\Zs^\beta(\Tr^d)$ 
it is sufficient to prove that \rf{Y-torus-1111} is fulfilled
for all $f,g \in \C^\infty(\Tr^d)$. Then, the product
\rf{usual-1111} will be automatically extended to the map
\rf{eqn-multiplication}, and moreover, \rf{Y-torus-1111} will be fulfilled.
% \aa{
% |f\cdot g|_{\beta} \ls |f|_{\ms Z^\al} |g|_{\beta}.
% }
% This inequality has been proved  \cite[Theorem 14.1]{cz2020} for all 
% $f \in Z^\al(\Tr^d)$ and $g \in \ms Z^\beta(\Tr^d)$. 

In \cite[Theorem 14.1]{cz2020}, it was shown that 
the inequality
\aaa{
\lb{Y-1111}
|f\cdot g|_{K, \beta} \ls |f|_{K_4, \al} |g|_{K_4, \beta},
}
is valid, in particular, for $f,g\in \C^\infty(\Rnu^d)$,
where $K\sub \Rnu^d$ is a compact and $K_4: = K + \ovl{B_4(0)}$ is its
enlargement with $B_4(0)$ being a ball of radius 4 centered
at the origin, see \cite[Formula (14.5)]{cz2020}.
In the periodic case, we take $K=\Tr^d$ and note
that the norms $|\cdot|_{\ms Z^\beta}$ and $|\cdot|_{\Tr^d,\beta}$,
and also $|\cdot|_{\ms Z^\al}$ and $|\cdot|_{\Tr^d,\al}$, $\al\notin \Nnu$, are equivalent
by Lemma \ref{h-e-1111} and Remark \ref{c5}.
 Then, \rf{Y-1111} implies \rf{Y-torus-1111} for $\C^\infty(\Tr^d)$-functions
%  \aa{
% |f\cdot g|_{\ms Z^\beta} \ls |f|_{\ms Z^\al} |g|_{\ms Z^\beta}
% }
by the same argument as in the proof of Lemma \ref{rk1111} for $\al\notin \Nnu$.
If $\al\in \Nnu$, we obtain
\rf{Y-torus-1111}  by Corollary \ref{cr-he}.
% It remains to notice that $\C^\infty(\Tr^d)$ is dense in $\Zs^\beta(\Tr^d)$
% and $\Zs^\al(\Tr^d)$ with respect to the norms of these spaces.
% This implies that the extension of the map 
% \rf{usual-1111} to the map \rf{eqn-multiplication} is unique.
\end{proof}
\begin{remark}
\rm
In \cite[Theorem 14.1]{cz2020}, has was proved that the Young product
\begin{equation*}
    \ms Z^\al(\Tr^d) \x \ms Z^\beta(\Tr^d) \to \ms Z^\beta(\Tr^d), \quad 
     (f,g)\mto  f\cdot g 
        \end{equation*}
        exists and extends the product \rf{usual-1111}, but it is not unique,
        see \cite[Remark 14.2]{cz2020}.
        By Lemma \ref{young}, the extension of the product \rf{usual-1111}
        to $\Zs^\al(\Tr^d) \x \Zs^\beta(\Tr^d)$ is unique and 
        $\Zs^\beta(\Tr^d)$-valued.
\end{remark}
\subsection*{Acknowledgements}
We express our gratitude for the support received during the preparation of this work. Z.B.\ acknowledges partial support from CNPq-Brazil (grant 409764/2023-0). E.S.\ acknowledges partial support from Universidade Federal da Para\'iba (PROPESQ/PRPG/UFPB) through grant PIA13631-2020 (public calls nos.\ 03/2020 and 06/2021), as well as from CNPq-Brazil (grants 409764/2023-0 and 443594/2023-6) and CAPES MATH AMSUD (grant 88887.878894/2023-00). 
  M.N., who is a member of the CNPq-Brazil (grant 443594/2023-6) and CAPES MATH AMSUD grant teams, acknowledges the collaborative support of these projects. 
 He also  acknowledges partial support from Universidade Federal do Amazonas
and 
Coordena\c c\~ao de Aperfei\c coamento de Pessoal de N\'ivel Superior (CAPES). 
Finally, E.S.\ extends thanks to the Mathematics Department of the 
University of York (UK) for their hospitality, as this paper was partially written during her visit to the department.

\end{document}